\crefname{equation}{}{}
\DeclareMathOperator*{\argmax}{arg\,max}
\apptocmd{\sloppy}{\hbadness 10000\relax}{}{} 
\crefname{algocf}{Algorithm}{Algorithms}
\crefname{equation}{}{} 
\crefname{conjecture}{Conjecture}{Conjectures} 
\colorlet{refkey}{orange!20}
\colorlet{labelkey}{blue!30}
\crefname{algocf}{Algorithm}{Algorithms}
\numberwithin{equation}{section}
\newtheorem{theorem}{Theorem}[section]
\newtheorem{proposition}[theorem]{Proposition}
\newtheorem{lemma}[theorem]{Lemma}
\crefname{claim}{Claim}{Claims}
\newtheorem*{question*}{Question}
\theoremstyle{definition}
\newtheorem{definition}[theorem]{Definition}
\newtheorem*{definition*}{Definition}
\theoremstyle{remark}
\newtheorem*{remark}{Remark}
\newcommand{\snorm}[1]{\lVert#1\rVert}
\newcommand{\mb}{\mathbb}
\newcommand{\mbf}{\mathbf}
\newcommand{\mbm}{\mathbbm}
\newcommand{\mc}{\mathcal}
\newcommand{\mf}{\mathfrak}
\newcommand{\ol}{\overline}
\newcommand{\on}{\operatorname}
\newcommand{\wh}{\widehat}
\newcommand{\wt}{\widetilde}
\newcommand{\eps}{\varepsilon}
\newcommand{\imod}[1]{~\mathrm{mod}~#1}
\title{Vinogradov's theorem for primes with Restricted digits}
\author[A1]{James Leng}
\address{Department of Mathematics, UCLA, Los Angeles, CA 90095, USA}
\email{jamesleng@math.ucla.edu}
\author[A2]{Mehtaab Sawhney}
\address{Department of Mathematics, Columbia University, New York, NY 10027}
\email{m.sawhney@columbia.edu}
\begin{document}

\begin{abstract}
Let $g$ be sufficiently large, $b\in\{0,\ldots,g-1\}$, and $\mc{S}_b$ be the set of integers with no digit equal to $b$ in their base $g$ expansion. We prove that every sufficiently large odd integer $N$ can be written as $p_1 + p_2 + p_3$ where $p_i$ are prime and $p_i\in \mc{S}_b$.
\end{abstract}

\maketitle

\section{Introduction}\label{sec:introduction}

A celebrated theorem of Vinogradov (see \cite[Chap.~2]{Dav00}) established that every sufficiently large odd positive integer is the sum of three primes. Meanwhile, recent work of Maynard \cite{May17,May22} has established the existence of primes with restricted digits. We establish a common generalization of these results given that the underlying base is sufficiently large. As is standard, $\Lambda(\cdot)$ denotes the von--Mangoldt function. 

\begin{theorem}\label{thm:main}
Fix $A\ge 1$, $g$ be sufficiently large and $b\in \{0,\ldots,g-1\}$. Let $\mbf{P}$ denote the set of primes and $\mc{S}_b$ denote the set of positive integers with no $b$ in their base-$g$ expansion. Then 
\[\sum_{\substack{x_1+x_2+x_3 = N\\x_i\in \mc{S}_b\cap \mbf{P}}}\prod_{i=1}^{3}\Lambda(x_i) = \big(1 + O_A((\log N)^{-A})\big)\prod_{\substack{p\\(p,g) =1}}\bigg(1 - \frac{p\mbm{1}_{p|N} - 1}{(p-1)^3}\bigg)\prod_{p|g}\bigg(\frac{p}{p-1}\bigg)^3\sum_{\substack{x_1+x_2+x_3 = N\\x_i\in \mc{S}_b\\(x_i,g) = 1}}1. \]  
\end{theorem}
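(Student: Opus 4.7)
The plan is to apply the Hardy--Littlewood circle method to the exponential sum
\[
S(\alpha) \;=\; \sum_{n \le N} \Lambda(n)\, \mbm{1}_{n \in \mc{S}_b}\, e(\alpha n),
\]
so that the left-hand side of \cref{thm:main} equals $\int_0^1 S(\alpha)^3\, e(-\alpha N)\, d\alpha$. I would decompose $[0,1]$ into major arcs $\mf{M}$, consisting of neighborhoods of rationals $a/q$ with $q \le Q := (\log N)^{C}$ for a suitable $C = C(A)$, and the complementary minor arcs $\mf{m}$.

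On the major arcs, the goal is to establish, for $(a,q) = 1$ with $(q,g) = 1$ and $|\beta|$ small, an asymptotic of the shape
\[
S\!\left(\tfrac{a}{q}+\beta\right) \;=\; \frac{\mu(q)}{\varphi(q)}\prod_{p\mid g}\frac{p}{p-1}\cdot T(\beta) \;+\; O\!\big(N(\log N)^{-B}\big),
\]
where $T(\beta) := \sum_{n\le N,\, n\in\mc{S}_b,\, (n,g)=1} e(n\beta)$, together with an analogous (but easier) expression when $(q,g)>1$. This reduces to a Siegel--Walfisz-type asymptotic for $\Lambda\cdot\mbm{1}_{n\in\mc{S}_b}$ in arithmetic progressions to moduli $q\le Q$, obtained by combining Maynard's treatment of primes with restricted digits \cite{May17,May22} with a standard zero-free region argument, while carefully tracking coprimality to $g$. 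Substituting into $\int_{\mf{M}} S(\alpha)^3 e(-\alpha N)\,d\alpha$, the arithmetic factors over $q$ assemble (by a standard multiplicative identity) into the singular series $\prod_{(p,g)=1}(1-(p\mbm{1}_{p|N}-1)/(p-1)^3)$, while $\int T(\beta)^3 e(-\beta N)\,d\beta$ is exactly the combinatorial triple sum $\sum_{x_1+x_2+x_3=N,\, x_i\in\mc{S}_b,\,(x_i,g)=1}1$ on the right-hand side.

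On the minor arcs, the target is a pointwise bound $\sup_{\alpha\in\mf{m}}|S(\alpha)| \le N(\log N)^{-B}$ for $B$ large. Combined with the Parseval estimate $\int_0^1 |S(\alpha)|^2\,d\alpha \ll N\log N$ to handle the other two factors, this suffices to absorb the minor arc contribution into the claimed error. To obtain such a pointwise bound I would apply Vaughan's (or Heath--Brown's) identity to decompose $\Lambda$ into Type I and Type II pieces restricted to $\mc{S}_b$, and estimate each using Maynard's Fourier decomposition of $\mbm{1}_{n\in\mc{S}_b}$ against the oscillation of $e(\alpha n)$ via standard Weyl/van der Corput manipulations.

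The main obstacle is precisely this minor arc estimate. Maynard's techniques already control sums of the form $\sum_{n\le N,\, n\in\mc{S}_b} f(n)\, e(\alpha n)$ in certain regimes, but obtaining cancellation uniformly over \emph{all} minor $\alpha$ while simultaneously exploiting the bilinear structure produced by Vaughan's identity is delicate: loss threatens precisely when $\alpha$ lies close to a rational whose denominator is a power of $g$, since the Fourier transform of $\mbm{1}_{\mc{S}_b}$ concentrates at such frequencies. Dealing with this likely requires splitting into cases according to the base-$g$ structure of the denominator of a rational approximation to $\alpha$, and fusing the digit-Fourier expansion with the bilinear sum estimate in a single argument. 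A secondary technical point is to verify that the singular series manipulations on the major arcs produce exactly the Euler product and combinatorial sum displayed in \cref{thm:main}.
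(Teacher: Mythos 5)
Your proposal is the classical circle method, which is a genuinely different route from the one the paper takes, and it contains a gap that is not merely technical: the minor-arc bound you propose has the wrong \emph{shape} of error term, by a power of $N$.

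The concrete problem is one of sizes. With $M = g^k$ and $N \asymp M$, the sparse set $\mc{S}_{b,M}$ has $(g-1)^k$ elements, so the main term on the right-hand side of \cref{thm:main} is of order
\[
\sum_{\substack{x_1+x_2+x_3 = N\\ x_i\in\mc{S}_b}} 1 \;\asymp\; \frac{(g-1)^{3k}}{g^{k}} \;=\; N^{2}\bigg(1-\frac{1}{g}\bigg)^{3k} \;=\; N^{\,2-3c},
\qquad c := \frac{\log(g/(g-1))}{\log g} > 0.
\]
Meanwhile, $\int_0^1 |S(\alpha)|^2\,d\alpha = \sum_n \Lambda(n)^2\mbm{1}_{n\in\mc{S}_b} \ll N^{1-c}(\log N)^{O(1)}$ (not $N\log N$ as you wrote; $S$ is supported on the sparse set $\mc{S}_b$). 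With your proposed minor-arc bound $\sup_{\mf{m}}|S(\alpha)| \le N(\log N)^{-B}$, the error from the minor arcs is $\ll N^{2-c}(\log N)^{O(1)-B}$, which exceeds the main term $N^{2-3c}(\log N)^{-A}$ by the factor $N^{2c}(\log N)^{O(1)}$ for \emph{every} $B$. To close the circle method one actually needs a power-saving bound of the shape $\sup_{\mf{m}}|S| \ll N^{1-2c-\epsilon}$, i.e.\ a power saving \emph{below the trivial bound} $S(0)\asymp N^{1-c}$. Siegel--Walfisz and Vaughan's identity only give $\log$-power savings for $q\le (\log N)^C$, so your choice of $Q = (\log N)^C$ is not an option here. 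You are also right to worry about the major-arc side, but for a reason you have underestimated: the $L^3$ mass of $T(\beta)$ is spread over digital frequencies $a/g^j$ for $j$ up to $k$, so $\int_{|\beta|<\delta}T(\beta)^3 e(-\beta N)\,d\beta$ is not close to the full integral, and the combinatorial triple count itself oscillates by a factor $N^{\Omega_g(1)}$ across a dyadic range of $N$; the paper explicitly flags both of these issues in its introduction as the reason the transference/classical-circle-method template fails.

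The paper avoids these problems by working with Green's power-saving ``Fourier model'' $\Lambda_{Q,\sigma_0}$ with $Q = M^{\sigma_0}$ a small \emph{power} of $N$ (built from grand zero-density estimates, \cref{prop:cruc}), for which $\sup_\Theta\big|\sum_{n\le M}(\Lambda-\Lambda_{Q,\sigma_0})(n)e(n\Theta)\big|\le M^{1-\Omega(1)}$. The crucial companion ingredient is the $L^1$ bound $\int|\wh{\mbm{1}_{\mc{S}_{b,M}}}|\ll (C\log g)^{k} = M^{o_g(1)}$ (\cref{lem:L1}): convolving the model error against $\mbm{1}_{\mc{S}_{b,M}}$ in Fourier (\cref{lem:expan}) transfers the power-saving sup-norm bound to the restricted sum, which beats the $M^{2-3c}$ main term precisely because $\sigma_0$ is absolute while $3c\to 0$ as $g\to\infty$. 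The main term and zero-correction terms of $\Lambda_{Q,\sigma_0}$ are then evaluated \emph{directly} in the triple sum over $x_1+x_2+x_3=N$ (not by extending a major-arc integral), after a combinatorial decomposition of the conditional law of $x_1$ given $x_1+x_2$ into ``restricted-digit'' product measures and a corresponding large sieve and character-sum analysis. If you want to repair your proposal, the first and unavoidable change is to replace $Q=(\log N)^C$ by $Q=N^{\sigma_0}$ and the Siegel--Walfisz input by zero-density input; the $T(\beta)^3$ obstruction then forces you to abandon the ``extend the singular integral'' step and compute the main term directly, at which point you have essentially arrived at the paper's framework.
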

\begin{remark}
A conservative estimate for $g$ is $g \ge 2^{100000}$; this can be improved by more carefully going through our proof, but is unlikely to give a ``civilized'' lower bound.
\end{remark}

We remark that there have been a number of generalizations of Vinogradov's three primes theorem to various subsets of primes. To highlight just a small subset of these results, we mention work of Balog and Friedlander \cite{BF92} on representations with Piatetski--Shapiro primes, work of Matom\"aki and Shao \cite{MS17} on representations with Chen primes, work of Grimmelt \cite{Gri22} to Fouvry-Iwaniec primes and a long line of work considering Vinogradov's theorem with nearly equal primes with current record coming from work of Matom\"aki, Maynard, and Shao \cite{MMS17}. We also mention recent work of Green \cite{Gre24} establishing a variant of Waring's problem using only $k$-th powers of integers with restricted digits.

\subsection{Sketch of proof}
Given the statement of the result, it is of no surprise that the proof proceeds via Fourier analytic methods. There are, however, a few subtleties one needs to account for. For instance, note that the ``main term'' contains expressions of the form 
\[\sum_{\substack{x_1+x_2+x_3 = N\\x_i\in \mc{S}_b}} 1.\]
A crucial issue is this expression varies dramatically as $N$ ranges over a dyadic range (by a $N^{\Omega_{g}(1)}$ factor). Formulated alternatively, there are a large number of Fourier coefficients which contribute to the $L^3$-norm of the Fourier transform of $\mbm{1}_{\mc{S}_b}$. This also appears to prevent the use of a Fourier transference approach pioneered by Green \cite{Gre05} in the context of Roth's theorem for the primes and which has been developed in a slew of the aforementioned works including \cite{MS17,MMS17,Gri22}. 

The crucial starting point in our work, as in essentially all work regarding digital properties of primes including that of Mauduit and Rivat \cite{MR10}, Bourgain \cite{Bou15}, and Maynard \cite{May17,May22}, is to exploit that the Fourier transform of $\mbm{1}_{\mc{S}_b}$ exhibits on average better than square--root cancellation. In particular
\[\int_{\mbf{R}/\mbf{Z}}\bigg|\sum_{0\le n<g^{k}}e(n\Theta)\mbm{1}_{n\in \mc{S}_b}\bigg|~d\Theta\ll_\eps g^{\eps k}\]
where $\eps\to 0$ as $g\to \infty$. This is much smaller than a bound of $g^{k/2}$ (say) which one may expect in light of square--root cancellation heuristics. Our second key ingredient is work of Green \cite[Section~2]{Gre22} which uses grand zero--density estimates in order to provide a certain ``Fourier--model'' for the von--Mangoldt function. In particular, Green finds $\Lambda^{\#}(n)$ such that 
\[\sup_{\Theta\in \mbf{R}/\mbf{Z}}\big|\sum_{1\le n\le N}(\Lambda(n) - \Lambda^{\#}(n))e(n\Theta)\big| \le N^{1-\Omega(1)}\]
where $\Lambda^{\#}(n)$ comes via carefully accounting for various corrections coming from zeros of Dirichlet $L$-functions. The key point is that via direct Fourier expansion (see \cref{lem:expan}), this extends to 
\[\sup_{\Theta\in \mbf{R}/\mbf{Z}}\big|\sum_{1\le n\le N}\mbm{1}_{n\in \mc{S}_b}(\Lambda(n) - \Lambda^{\#}(n))e(n\Theta)\big| \le N^{1-\Omega(1)}.\]
Thus via standard maneuvers, it suffices to instead bound 
\[\sum_{\substack{x_1+x_2+x_3 = N\\x_i\in \mc{S}_b}}\prod_{i=1}^{3}\Lambda^{\#}(x_i).\]

Under the assumption of GRH, we have that $\Lambda^{\#}$ is precisely 
\[\sum_{q\le Q}\frac{\mu(q)}{\phi(q)}c_q(n),\]
where $c_q(n) = \sum_{a\in (\mbf{Z}/q\mbf{Z})^{\ast}}e\big(\frac{an}{q}\big)$; here $Q$ is taken to be a small power of $N$. Without GRH, there are additional terms coming from possible zeros of $L$ functions which we discuss at the end of the sketch. The remaining issue therefore is to estimate 
\[\sum_{\substack{x_1+x_2+x_3 = N\\x_i\in \mc{S}_b}}\prod_{i=1}^{3}\Lambda^{\#}(x_i)\]
and analyze which terms are negligible compared to the main term. 

The key idea is to prove that fixing $x_3$, all but certain ``obvious'' major terms vanish; in particular for fixed $x_3$ one can estimate 
\[\sum_{\substack{x_1+x_2 = N-x_3\\x_i\in \mc{S}_b}}\prod_{i=1}^{2}\Lambda^{\#}(x_i)\]
extremely well. The trick in order to do this is to note that the distribution of $x_1$ given that $x_1 + x_2 = T$ and $x_i\in \mc{S}_b$ can be decomposed into a sequence of ``restricted digit measures''. In particular, if one reveals the sum of the $j$-th digits of $x_1$ and $x_2$ for all $j$ then the distribution of $x_1$ is given by choosing the $j$-th digit from a certain set $\mc{B}_j$. Each of these submeasures has an extremely nice Fourier product structure and allows us to prove $L^{1}$ Fourier transform and large sieve inequalities directly via replicating the techniques of Maynard \cite{May17}. 

To illustrate this, we require some notation. Let $x_{i, j}$ be the $j$th digit of $x_i$ in base $g$ and $T_j$ be the $j$th digit of $T$. Note that $x_{1, 1} + x_{2, 1} = T_1 + ig$ where $i \in \{0, 1\}$ represents a ``carry'' in the digit addition.  We then have $x_{1, 2} + x_{2, 2} = T_2 - i \pmod{g}$. We are now effectively in a situation where we have replaced $T$ with $(T - T_1)/g - i$. Notice that if we fix the values of the carries $i_j$ (with $i_0 = 0$) beforehand, the probability distribution decouples as
\[i_{j - 1} + x_{1, j} + x_{2, j} = T_j + gi_{j}\]
Here, it is easy to see that conditioned on $i_j$, the probability distribution of $x_{1, j}$ is according to a product measure on an interval $\mathcal{B}_j$ which is missing at most two digits. Thus, the actual probability distribution of $x_{1, j}$ decomposes as a superposition of many such product distributions. 

A key task in this paper is proving that the decomposition induced by the above procedure is sufficiently non-degenerate (e.g. the associated sets $\mc{B}_j$ are relatively large). This is accomplished via a series of combinatorial lemmas proven in \cref{sec:combo}. Given these non--degeneracy conditions one can then apply the methods of \cite{May17} to derive the necessary arithmetic information on various measures such as $x_1$ conditional on $x_1+x_2 = T$.

Since this procedure of conditioning on the ``carry'' to construct the product measure is rather cumbersome to describe formally, and is performed several times throughout the proof, we shall take the liberty of describing this procedure as simply ``revealing'' (or conjugations of ``reveal'') digits. 

The handling of terms corresponding to various Dirichlet $L$-function correction terms are more complicated as certain character terms arise naturally and we only establish cancellation if $N-x_3$ is sufficiently co-prime to the conductor of zeros under consideration. However, as we only need an ``averaged in $x_3$'' result, one can in fact establish the necessary estimate for almost all $x_3$ and complete the proof. A final remark here is that this reduction to only considering ``binary correlation averages'' is possible due to the ``better than square--root cancellation'' and explicitly used in the work of, say, Maynard \cite{May17,May22}.

\subsection{Notation}
For the remainder of the paper, set $M = g^{k}$ such that $g^{k-1}\le N<g^{k}$ and let $\mc{S}_{b,M} = \mc{S}_{b}\cap\{0,1,\ldots,M-1\}$. Throughout, we use that $[a,b] = \{a,a+1,\ldots,b\}$ if $a,b\in \mbf{Z}$ and $[n] = \{0, 1, \dots, \lfloor n\rfloor - 1\}$ for $n \in \mbf{R}$; we will not require the corresponding expression for a closed interval. Furthermore throughout we use the notation $A\sim B$ to denote that $B/2\le A<2B$ (e.g., $A$ lies within a factor of $2$ of $B$) and $X\sim \mu$ to denote $X$ is drawn from the measure corresponding to $\mu$; the usage will always be clear from context. For $x \in \mbf{N}$, the quantity $\tau(x)$ denotes the number of the divisors of $x$ and let $v_p(x)$ denote the largest integer $e$ such that $p^{e}|x$. Finally let $\on{Ber}(p)$ denote a random variable which is $1$ with probability $p$ and $0$ otherwise and $\on{Bin}(n,p)$ be the sum of $n$ independent copies of $\on{Ber}(p)$.

As is standard, we let $e(\Theta) = e^{2\pi i\Theta}$. Throughout we operate with the Fourier transform from $\mbf{Z}\to \mbf{C}$. In particular, given a finitely supported function $f:\mbf{Z}\to \mbf{C}$ we define 
\[\wh{f}(\Theta) :=\sum_{x\in \mbf{Z}}f(x) e(-x\Theta). \]
In this normalization, we have that the Fourier inversion formula is 
\[f(x) = \int_{\mbf{R}/\mbf{Z}}\wh{f}(\Theta)e(x\Theta)~d\Theta\]
and Parseval/Plancherel takes the form
\[\sum_{x\in \mbf{Z}}\ol{f(x)}g(x) = \int_{\mbf{R}/\mbf{Z}}\ol{\wh{f}(\Theta)}\wh{g}(\Theta)~d\Theta.\]

Finally we use standard asymptotic notation for functions on $\mb{N}$ throughout. Given functions $f=f(x)$ and $g=g(x)$, we write $f=O(g)$, $g = \Omega(f)$, $f\ll g$, or $g\gg f$ to mean that there is a constant $C$ such that $|f(x)|\le Cg(x)$ for sufficiently large $x$. In particular, $\Omega$ will not be used to denote the prime divisor counting function. Subscripts in this notation indicate the implied constant depends on the subscripts. 

\subsection{Acknowledgements}
The second author thanks Ashwin Sah for initial discussions regarding this problem. We thank Zachary Chase, Ben Green, and Zach Hunter for useful comments and corrections. The authors would also like to thank the anonymous referees for helpful comments.

Leng was supported by NSF Graduate Research Fellowship Grant No.~DGE-2034835. This research was conducted during the period Sawhney served as a Clay Research Fellow.

\section{Combinatorics of digits representations}\label{sec:combo}

The crucial input in proving that various decompositions give rise to ``dense cubes'' is proving that the number of representations of $N$ is not overly sensitive to small changes in the final digit. 

\begin{lemma}\label{lem:init}
There exists a constant $C = C_{\ref{lem:init}}\ge 1$ such that the following holds. Let $g\ge C$, $g^{r}\le T<g^{r+1}$ for an integer $r\ge 2$. Then
\[\max_{j_1,j_2\in [0,3]}\frac{\#\{\sum_{i=1}^{3}x_i = T-j_1:x_i\in \mc{S}_b\}}{\#\{\sum_{i=1}^{3}x_i = T-j_2:x_i\in \mc{S}_b\}}\le C.\]
\end{lemma}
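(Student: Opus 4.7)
My plan is to prove the lemma by induction on $r$, reducing the comparison of $R(T-j_1)$ and $R(T-j_2)$ (where $R(m) := \#\{(x_1,x_2,x_3) \in \mc{S}_b^3 : x_1+x_2+x_3 = m\}$) to bounded ratios of local combinatorial coefficients via a digit-carry decomposition. Let $s = s(T) \ge 0$ be the smallest integer with $T \bmod g^{s+1} \ge 4$; such an $s$ exists with $s \le r$ since $T \ge g^r \ge 4$. The choice of $s$ ensures $\lfloor (T-j)/g^{s+1}\rfloor = \lfloor T/g^{s+1}\rfloor =: q$ for all $j \in \{0,1,2,3\}$, so the ``high part'' of the digit expansion is unaffected by subtracting $j$. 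Writing $x_i = y_i + g^{s+1}z_i$ with $y_i \in [0, g^{s+1})$ and $z_i \ge 0$, and parameterising by the carry $c \in \{0,1,2\}$ produced in the low sum, we obtain
\[
R(T-j) \;=\; \sum_{c=0}^{2} N_{s+1}\bigl((T \bmod g^{s+1}) - j + c\,g^{s+1}\bigr)\, P_{s+1}(q - c),
\]
where $N_{s+1}(u) := \#\{(y_i) \in (\mc{S}_b \cap [0,g^{s+1}))^3 : y_1+y_2+y_3 = u\}$ and $P_{s+1}(m) := \#\{(z_i) \in (\mc{S}_b \cap [0, g^{r-s}))^3 : z_1+z_2+z_3 = m\}$. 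Crucially, the $P_{s+1}(q-c)$ factors do not depend on $j$.

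The central $c = 1$ term dominates the sum: its argument $(T \bmod g^{s+1}) - j + g^{s+1}$ lies in the middle of the support $[0, 3(g^{s+1}-1)]$ of $N_{s+1}$, where an elementary counting argument (or an inclusion-exclusion on the forbidden digit $b$) gives $N_{s+1} \asymp g^{2(s+1)}$ and $|N_{s+1}(u) - N_{s+1}(u')| \ll g^{s+1}$ for $|u - u'| \le 3$. This yields the lower bound $R(T-j) \ge c_1 g^{2(s+1)} P_{s+1}(q-1)$ for an absolute constant $c_1 > 0$. For the matching upper bound, $N_{s+1} \le g^{2(s+1)}$ is immediate, so it suffices to show $P_{s+1}(q - c) \le C' P_{s+1}(q-1)$ for $c \in \{0,2\}$.

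This last comparison is where the induction enters. For $m < g^{r-s}$ the upper bound $z_i < g^{r-s}$ in the definition of $P_{s+1}(m)$ is automatic (since $z_i \le m$), so $P_{s+1}(m) = R(m)$; since $q \in [g^{r-s-1}, g^{r-s})$, applying the lemma inductively with $r' = r - s - 1$ gives the required ratio, provided $r' \ge 2$, i.e., $s \le r - 3$. Combined with the analysis of the $N_{s+1}$-factors in the previous paragraph, this yields $R(T-j_1)/R(T-j_2) = O(1)$ uniformly, closing the induction in this main case.

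The main obstacle is the handling of the boundary cases $s \in \{r-2, r-1, r\}$ (and the base case $r = 2$, where every $s$ is boundary), in which the inductive hypothesis on $P_{s+1}$ is not available. These correspond to the special $T$ whose base-$g$ expansion has almost all low-order digits equal to zero, e.g., $T = T_r g^r + T_0$ with $T_0 \in \{0,1,2,3\}$, or $T = T_r g^r + T_{r-1} g^{r-1} + T_0$. For these highly structured $T$, the argument must be reworked by hand: either by a top-down digit reveal combined with an explicit injection between representations of $T - j_1$ and $T - j_2$ that exploits the sparse structure, or by direct enumeration when $T$ has only $O(1)$ nonzero digits. Only finitely many structural patterns per $r$ need to be treated, but checking them carefully constitutes the bulk of the bookkeeping and is the most delicate part of the proof.
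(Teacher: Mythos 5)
Your decomposition (reveal the $s+1$ low digits, split by the carry $c\in\{0,1,2\}$, induct on the high block) is the same flavor of digit--carry argument the paper uses, but the central estimate you invoke for the low-block counts is false. You claim an ``elementary counting argument'' gives $N_{s+1}(u)\asymp g^{2(s+1)}$ (with absolute implied constants) for $u$ in the middle of $[0,3(g^{s+1}-1)]$. However $|\mathcal{S}_b\cap[0,g^{s+1})|\approx(g-1)^{s+1}$, so already the trivial bound $N_{s+1}(u)\le|\mathcal{S}_b\cap[0,g^{s+1})|^2\ll(g-1)^{2(s+1)}=g^{2(s+1)}(1-1/g)^{2(s+1)}$ falls below $g^{2(s+1)}$ by a factor $\exp(-\Omega(s/g))$, so no lower bound of the form $c_1g^{2(s+1)}$ with $c_1$ independent of $s$ can hold; an inclusion--exclusion on the forbidden digit gives exactly this $(1-O(1/g))^{3(s+1)}$ density loss, not $\asymp g^{2(s+1)}$. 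Your $s$ (the smallest index with $T\bmod g^{s+1}\ge 4$) can be as large as $r$, e.g.\ for $T=ag^r+c$ with $c\le 3$, so this is not a removable edge case. The companion Lipschitz claim $|N_{s+1}(u)-N_{s+1}(u')|\ll g^{s+1}$ for $|u-u'|\le 3$ is likewise asserted without proof, and the two claims together would immediately give bounded ratios --- that is, they encode precisely the content of the lemma restricted to the low block, which is why they cannot be waved through as elementary. On top of this you explicitly defer the cases $s\in\{r-2,r-1,r\}$ and the base case $r=2$.

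The paper sidesteps the need for any pointwise estimate over a long low block by revealing one digit at a time, so the local counts live on at most two digits and can be written down: $\#\{x_1+x_2+x_3=s-j\}\approx s^2/2$, $\#\{x_1+x_2+x_3=g+s-j\}\asymp g^2$, $\#\{x_1+x_2+x_3=2g+s-j\}\approx(g-s)^2/2$. If $\min(s,g-s)\ge 100$ these are all comparable and the four values $\#\{x_1+x_2+x_3=T-j\}$ agree up to constants; otherwise the paper distinguishes whether the argmax of $\#\{x_1+x_2+x_3=T'-j\}$ is interior (the $\Theta(g^2)$ middle term then dominates uniformly) or not, in which case the ratio bound $M$ for $T'$ is transformed to $\ll(M+g^2)/g^2\le M/2$ for $T$, so the induction self-corrects to an absolute constant. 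That contraction step is what replaces the pointwise $N_{s+1}(u)\asymp g^{2(s+1)}$ claim in your outline, and it is the missing idea.
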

\begin{remark}
The analog of this lemma is unfortunately not true with a ``bivariate'' summation $x_1 + x_2$ and $b = 1$. In particular if $x_1 + x_2 = 2g^r - 1$, then one of $x_1$ or $x_2$ must have leading digit $1$.
\end{remark}
\begin{proof}
We first handle the initial case where $r = 2$ via casework. Note that 
\[\#\bigg\{\sum_{i=1}^{3}x_i = T:x_i\in \mc{S}_b\bigg\}\le \frac{T^2}{2}.\] We prove that for $g^{2}\le T<g^{3}$, we may achieve a positive fraction of the upper bound. Let $\mc{S}_b^{\ast}$ denote the set of integers with no $0$ or $b$ in their base $g$ expansions; note that
\[\#\big\{x\le g^2-1:x\notin \mc{S}_b^{\ast}\big\}\le 4g.\]
Thus for $g^2/4\le T\le 11g^2/4$, we have
\[\#\bigg\{\sum_{i=1}^{3}x_i = T:x_i\in \mc{S}_b, x_i\le g^2\bigg\}\gg T^2.\]

For the remainder of the initial case, let $T = a_1g^2 + a_2$ with $a_1\in [2,g-1]$ and $a_2\in [0,g^2-1]$. As $g^2\le a_2 + g^2 \le 2g^2 - 1$, if 
\[\big\{x_1 + x_2 + x_3 = (a_1 - 1): x_i\in \{0\}\cup \mc{S}_b\big\}\gg a_1^2,\]
the initial case follows. Using that $a_1 - 1 = (a_1 - 1) + 0 + 0 = (a_1 - 3) + 1 + 1$ and that there is at most $1$ missing digit, the inequality is trivially verified for $a_1 - 1\ge 2$. Thus the only remaining case is $a_1 = 2$ and that $a_2\ge 3g^2/4$. If $b \neq 1$, we have sufficient many representations via $a_1 - 1 = 1 + 0 + 0$. Else if $b = 1$, note that $a_1 = 2 + 0 + 0$ and we have sufficiently many representations with $x_1,x_2\le g^2-1$ and $2g^2\le x_3<3g^2$. 

We now proceed via induction. We will have $x_i\in \mc{S}_b$ throughout the remainder of the proof even when not specified. Let $T = T'g + s$ where $s\in[0,g-1]$ and let $j\in [0,3]$. Note that 
\[\#\big\{x_1+x_2 + x_3 = T-j\big\} = \sum_{k=0}^{2}\#\big\{x_1+x_2+x_3 = T'-k\big\}\cdot\#\big\{x_1+x_2+x_3 = kg+s-j\big\}.\]
For $s\in[0,g-1]$, we have
\begin{align*}
s(s+1)/2-3s&\le \#\{x_1+x_2+x_3 = s\}\le s(s+1)/2\\
g^2&\ll \#\{x_1+x_2 +x_3= g + s\}\ll g^2\\
(g-s-1)(g-s)/2 - 12(g-s)&\ll \#\{x_1+x_2 +x_3= 2g + s\}\ll (g-s-1)(g-s)/2.
\end{align*}

Note that if $100\le \min(s,g-s)$, the desired result follows immediately. We now handle the case where $s\le 100$; the remaining case is analogous. If 
\[\argmax_{j\in [0,2]}\#\big\{x_1+x_2+x_3 = T' - j\big\} \in [1,2],\]
the result follows immediately. Else let 
\[M \ge \max_{j,j'\in [0,3]}\frac{\#\{x_1+x_2+x_3 = T'-j\}}{\#\{x_1+x_2+x_3 = T'-j'\}}\]
and we have 
\[\max_{j_1,j_2\in \{0,1,2,3\}}\frac{\#\{\sum_{i=1}^{3}x_i = T-j_1:x_i\in \mc{S}_b\}}{\#\{\sum_{i=1}^{3}x_i = T-j_2:x_i\in \mc{S}_b\}}\ll \frac{M + g^2}{g^2}\le M/2\]
provided $M$ is larger than an absolute constant. This completes the induction and hence the proof of the lemma.
\end{proof}

We next require that the set of solutions to $x_1 + x_2 + x_3 = N$ is not too sparse via noting that $\mc{S}_b + \mc{S}_b + \mc{S}_b$ is well distributed $\imod g$. 
\begin{lemma}\label{lem:sol-bound}
Let $g\ge C_{\ref{lem:sol-bound}}$ and $T\ge g^2$. Then
\[\#\{x_1 + x_2 + x_3 = T:x_i\in \mc{S}_b\}\ge g^{-C_{\ref{lem:sol-bound}}}\cdot T^{2\log(1-3/g)}.\]
\end{lemma}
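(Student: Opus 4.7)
The plan is to induct on $k$, where $g^{k-1}\le T<g^k$, via a digit-by-digit recursion. Write $N(T):=\#\{(x_1,x_2,x_3)\in \mc{S}_b^3:x_1+x_2+x_3=T\}$. Decomposing each $x_i=gy_i+a_i$ with $a_i\in\{0,\ldots,g-1\}\setminus\{b\}$ and $y_i\in\mc{S}_b$, and writing $T=gT'+r$ with $r\in[0,g-1]$, one has, for any carry $c\in\{0,1,2\}$,
\[N(T)\ge M_r^c\cdot N(T'-c),\]
where $M_r^c:=\#\{(a_1,a_2,a_3)\in(\{0,\ldots,g-1\}\setminus\{b\})^3:a_1+a_2+a_3=r+gc\}$.

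My first step is to establish the uniform bound $M_r^1\ge g^2/4$ for all $r\in[0,g-1]$ and $b\in\{0,\ldots,g-1\}$, valid once $g$ is large enough. Indeed, the target digit-sum $s=r+g$ lies in the middle range $[g,2g-1]$, where the unrestricted count of compositions of $s$ into three parts in $[0,g-1]$ is a piecewise quadratic in $s$ minimized at the endpoints with value at least $(g^2-g)/2$; the no-$b$ restriction costs at most $3g+O(1)$ by inclusion-exclusion on which coordinate equals $b$.

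Iterating the recursion with $c=1$ at every step yields
\[N(T)\ge(g^2/4)^{k-3}\cdot N(T^{(k-3)}),\]
where $T^{(0)}=T$ and $T^{(j+1)}=\lfloor T^{(j)}/g\rfloor-1$. A short bound shows $T^{(k-3)}\ge g^2-2$, placing it in the regime of the base case of \cref{lem:init}, whose proof already establishes $N(t)\gg t^2\gg g^4$ for $t\in[g^2,g^3)$; the at most two exceptional values $T^{(k-3)}\in\{g^2-2,g^2-1\}$ are handled by a direct ad hoc construction (e.g., take one of $x_1,x_2,x_3$ to be a small element of $\mc{S}_b$ and split the remainder). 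Combining these estimates gives $N(T)\gg(g^2/4)^{k-3}$, which trivially dominates the target $g^{-C_{\ref{lem:sol-bound}}}T^{2\log(1-3/g)}$, since the exponent $2\log(1-3/g)$ is a small negative number and hence $T^{2\log(1-3/g)}\le 1$.

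The main obstacle is purely bookkeeping: verifying $M_r^1\ge g^2/4$ uniformly across all $r$ and $b$ (with a small adjustment in the degenerate case $b=0$, where the forbidden-digit convention interacts with the leading-digit representation in $\mc{S}_b$), and checking that the iterated $T^{(j)}$ remains in the valid range through the boundary cases between consecutive digit lengths. Neither step is deep; together they yield a bound far stronger than the weak power-of-$T$ lower bound in the statement.
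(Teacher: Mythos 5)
Your argument is correct and follows essentially the same route as the paper: peel off the last digit, bound the number of admissible digit triples uniformly, and recurse down to a base window near $g^2$ handled via the casework in \cref{lem:init}. The differences are cosmetic. The paper keeps all three carries and writes $N(T)=\sum_{c=0}^2 M_r^c\,N(T'-c)\ge\big(\sum_c M_r^c\big)\min_{T^*\in[T'-2,T']}N(T^*)\ge (g^2-3g)\min_{T^*}N(T^*)$, so it gets the sharper per-step factor $g^2-3g$ by aggregating over carries rather than committing to $c=1$; your fixed-carry variant with $M_r^1\ge g^2/4$ is weaker but, as you note, still vastly exceeds what the stated inequality demands. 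Your observation that the exponent $2\log(1-3/g)$ is negative, so the right-hand side is at most $g^{-C}\le 1$, is accurate and worth flagging: as written the lemma follows from $N(T)\ge 1$ alone, and the recursion (yours or the paper's) in fact yields $N(T)\gg_g T^{2-O(1/\log g)}$, which is what the downstream applications (e.g.\ absorbing the $M^{35/18+o(1)}$ error in \cref{lem:approx-remov}) actually require; the printed exponent is presumably a slip for something like $2+\log_g(1-3/g)$.

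Two small bookkeeping points to tighten. First, your iteration $T^{(j+1)}=\lfloor T^{(j)}/g\rfloor-1$ can land slightly below $g^2$ after $k-3$ steps; it is cleaner to stop one step earlier (or to take the minimum over $T^*\in[T'-2,T']$ as the paper does) so that the base case always sits inside $[g^2,2g^3]$ where \cref{lem:init}'s argument gives $N\ge 1$ directly, rather than appealing to an unexplained ``ad hoc construction'' for $T^{(k-3)}\in\{g^2-2,g^2-1\}$. Second, the digit decomposition $x_i=gy_i+a_i$ with $y_i\in\mc{S}_b$ silently discards solutions with some $x_i<g$ (since $0\notin\mc{S}_b$); this only makes the count a lower bound, which is all you need, but it deserves a sentence, especially since you raise the $b=0$ convention issue without resolving it.
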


\begin{proof}
Throughout the proof, we drop the condition that $x_i\in \mc{S}_b$. We proceed by induction on $T$; for $g^{2}\le T\le 2g^{3}$, the proof of \cref{lem:init} implies that 
\[\#\{x_1 + x_2 + x_3 = T\}\ge 1.\]
For larger $T$, let $T = gT' + s$ with $s\in [0,g-1]$ and we have 
\begin{align*}
\{x_1 + x_2 + x_3 = T\} &= \sum_{j=0}^{2}\{x_1+x_2+x_3 = s + jg\}\{x_1 + x_2 + x_3 = T' -j\}\\
&\ge \{x_1 + x_2 + x_3\equiv s\imod g\} \cdot \min_{T^{\ast}\in [T'-2,T']}\{x_1 + x_2 + x_3 = T^{\ast}\}\\
&\ge (g^2 - 3g) \cdot \min_{T^{\ast}\in [T'-2,T']} \{x_1 + x_2 + x_3 = T^{\ast}\}.
\end{align*}
A routine induction argument completes the proof. 
\end{proof}

The next lemma while a bit technical to state captures a certain ``stochastic domination'' criterion which will be used throughout the paper. 

\begin{lemma}\label{lem:chernoff}
There exists a constant $C = C_{\ref{lem:chernoff}}\ge 1$ such that the following holds. 

Suppose that $g^{r}\le T<g^{r+1}$ with $g\ge C$, $t\ge 0$ and take 
\[x_i = \sum_{j=0}^{k}x_{i,j}g^{j}\]
for $x_{i, j} \in [0, g - 1]$. Let $\mc{S}_j\subseteq [0,g-1]^2$ for $j\in [k/2]$ and 
\[Y_j = \mbm{1}[(x_{1,j},x_{2,j})\in \mc{S}_j].\]
Then for any $S\subseteq [k/2]$, we have
\[\mb{P}_{\substack{x_1 + x_2 + x_3 = T\\x_i\in \mc{S}_b}}\bigg[\sum_{j\in S}Y_j\ge t\bigg]\le \mb{P}\bigg[\sum_{j\in S}\on{Ber}\bigg(\min\bigg(\frac{C\big|\mc{S}_j\big|}{g^2},1\bigg)\bigg)\ge t\bigg].\]
Furthermore, let $\mc{S}_j'\subseteq [0,g-1]$ for $j\in [k/2]$ and 
\[Z_j = \mbm{1}[x_{1,j}\in \mc{S}_j'].\]
Then for any $S\subseteq [k/2]$, we have
\[\mb{P}_{\substack{x_1 + x_2 + x_3 = T\\x_i\in \mc{S}_b}}\bigg[\sum_{j\in S}Z_j\ge t\bigg]\le \mb{P}\bigg[\sum_{j\in S}\on{Ber}\bigg(\min\bigg(\frac{C\big|\mc{S}_j'\big|}{g},1\bigg)\bigg)\ge t\bigg].\]
\end{lemma}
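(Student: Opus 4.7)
The proof applies the ``reveal'' procedure described in the introduction. First I would sample $x_3$ from its marginal on $\mc{S}_b$, and then reveal the base-$g$ carry sequence $c = (c_j)$ for the addition $x_1 + x_2 = T - x_3$. Conditional on $(x_3, c)$, at each position $j \in [k/2]$ the digit sum $s_j := (T - x_3)_j + g c_j - c_{j-1}$ is determined, and $x_{1, j}$ is uniformly distributed over
\[\mc{B}(s_j) := \{a \in [0, g-1] \setminus \{b\} : s_j - a \in [0, g-1] \setminus \{b\}\}\]
with $x_{2, j} = s_j - x_{1, j}$; moreover these pairs are mutually independent across $j \in [k/2]$.

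In particular, conditional on $(x_3, c)$, the indicators $Y_j$ for $j \in S$ are independent Bernoullis with parameters
\[p_j(x_3, c) = \frac{|\{a \in \mc{B}(s_j) : (a, s_j - a) \in \mc{S}_j\}|}{|\mc{B}(s_j)|}.\]
The main step is to bound $p_j(x_3, c)$ appropriately. Holding $c$ and all coordinates of $x_3$ other than $x_{3, j}$ fixed, $s_j$ ranges over $g - 1$ distinct values as $x_{3, j}$ varies over $[0, g-1] \setminus \{b\}$; averaging gives
\[\mb{E}_{x_{3, j}}[p_j] = \frac{1}{g - 1} \sum_s \frac{|\{a : (a, s - a) \in \mc{S}_j\}|}{|\mc{B}(s)|} \le \frac{C|\mc{S}_j|}{g^2},\]
since $\sum_s |\{a : (a, s - a) \in \mc{S}_j\}| \le |\mc{S}_j|$ and $|\mc{B}(s)| = \Omega(g)$ for $s$ in the good range $[C_0, 2g - C_0]$; the handful of extreme values $s \notin [C_0, 2g - C_0]$ contribute only $O(|\mc{S}_j|/g^2)$ in total, since by \cref{lem:init} the joint mass placed on such boundary configurations is already $O(1/g^2)$.

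Given the conditional independence and the parameter bounds above, the stated stochastic domination follows by a standard coupling/Chernoff comparison: for each fixed $(x_3, c)$ one couples the independent Bernoullis $\on{Ber}(p_j(x_3, c))$ with $\on{Ber}(q_j)$ and takes expectation over $(x_3, c)$. The second inequality of the lemma is immediate by applying the first with $\mc{S}_j := \mc{S}'_j \times ([0, g-1] \setminus \{b\})$; the identity $|\mc{S}_j| = (g - 1) |\mc{S}'_j|$ produces the factor of $g$ rather than $g^2$ in the Bernoulli parameter.

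The principal technical obstacle is the careful case analysis for the extreme values of $s_j$ where $|\mc{B}(s_j)|$ is small and $p_j$ could be close to $1$. Controlling these relies on the stability provided by \cref{lem:init}, together with bookkeeping for the few carry configurations that force $s_j$ near $0$ or $2g - 2$; the analysis here parallels the boundary casework already carried out in the proof of \cref{lem:init}.
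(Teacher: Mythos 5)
Your approach is structurally different from the paper's, and it contains a genuine gap. The paper conditions \emph{sequentially} on the low-order digit triples $(x_{1,\ell},x_{2,\ell},x_{3,\ell})$ for $\ell<j$ and shows that, for \emph{every} realization of that history, $\mb{P}[Y_j=1 \mid \text{history}]\ll |\mc{S}_j|/g^2$; stochastic domination by a sum of independent Bernoullis with fixed parameters then follows by a standard coupling. You instead condition on the full carry sequence $c$ together with $x_3$, observe (correctly) that the $Y_j$ become conditionally independent with parameters $p_j(x_3,c)$, and then bound only the \emph{average} $\mb{E}_{x_{3,j}}[p_j]$.

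An averaged bound on the parameters is not enough for the stated domination. The final coupling step, ``for each fixed $(x_3,c)$ one couples $\on{Ber}(p_j(x_3,c))$ with $\on{Ber}(q_j)$,'' requires $p_j(x_3,c)\le q_j$ \emph{pointwise}; but when the digit sum $s_j$ is near $0$ or $2g-2$, $|\mc{B}(s_j)|=O(1)$ and $p_j(x_3,c)$ can be close to $1$, far above $q_j=\min(C|\mc{S}_j|/g^2,1)$. In that regime the coupling you invoke does not exist, and ``taking expectation over $(x_3,c)$'' cannot repair it. As a toy counterexample: if a $\sigma$-field $\mc{F}$ has two atoms, one on which $p_j=1$ for all $j$ and one on which $p_j=0$ for all $j$ (each with probability $1/2$), then $\mb{E}[p_j]=1/2$ but $\sum_j Y_j$ equals $n$ with probability $1/2$ and so is not dominated by $\on{Bin}(n,1/2)$. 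There is also a secondary issue: once you condition on $c$ and the other coordinates of $x_3$, the conditional law of $x_{3,j}$ is not uniform on $[0,g-1]\setminus\{b\}$ (the carries are functions of $x_1,x_2$, which are correlated with $x_{3,j}$), so even the stated average computation needs justification.

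The clean repair is to drop the full conditioning on $(x_3,c)$ and condition only on the digits at positions $0,\ldots,j-1$, as the paper does: this determines $x_{1,j}+x_{2,j}+x_{3,j}\bmod g$, there are between $g^2-3g$ and $g^2$ admissible $j$-th digit triples for each residue, and \cref{lem:init} shows each such triple carries conditional probability $O(1/g^2)$ regardless of the history. That gives the uniform-in-history bound $\mb{P}[Y_j=1\mid\text{history}]\ll|\mc{S}_j|/g^2$, from which the stochastic domination follows by a standard sequential coupling.
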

\begin{remark}
Note that this lemma converts various results regarding the digits patterns of $x_i$ into bounds for binomial random variables. 
\end{remark}
\begin{proof}
We first note that the second part of the lemma follows from the first via $\mc{S}_j = \mc{S}_j'\times [0,g-1]$. 

To prove the first part of the lemma, it suffices to prove that
\[\mb{P}_{\substack{x_1 + x_2 + x_3 = T\\x_i\in \mc{S}_b}}\bigg[Y_j = 1|(x_{1,\ell},x_{2,\ell},x_{3,\ell})\text{ for }0\le \ell\le j-1\bigg]\ll \frac{|\mc{S}_j|}{g^2}.\]
For each $s\in [0,g-1]$, there are between $g^2-3g$ and $g^2$ tuples with $x_i'\in [0, g - 1]$ such that $x_1' + x_2' + x_3' = s \imod g$. Furthermore, note that given $T$ and $(x_{1,\ell},x_{2,\ell},x_{3,\ell})$ for $0\le \ell\le j-1$ that $x_{1,j} + x_{2,j} + x_{3,j} \imod g$ is deterministic. The desired result then follows from \cref{lem:init}.
\end{proof}

We now state the Chernoff--Hoeffding bound which will be used throughout the paper.
\begin{lemma}\label{lem:chernoff-hoeff}
For $p\le p'\le 1$, we have
\begin{align*}
\mb{P}_{X\sim \on{Bin}(n,p)}[X\ge np']&\le \exp\bigg(-n\bigg(p'\log\bigg(\frac{p'}{p}\bigg) + (1-p')\log\bigg(\frac{1-p'}{1-p}\bigg)\bigg)\bigg).
\end{align*}
\end{lemma}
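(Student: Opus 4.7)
The plan is to apply the standard exponential moment (Chernoff) method, followed by an explicit optimization of the free parameter $\lambda$. Since $X \sim \on{Bin}(n,p)$ is a sum of $n$ independent $\on{Ber}(p)$ random variables, its moment generating function factorizes, and the optimized bound will match precisely the KL-divergence expression in the statement.

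Concretely, for any $\lambda \ge 0$, Markov's inequality applied to $e^{\lambda X}$ gives
\[
\mb{P}[X \ge np'] \le e^{-\lambda n p'}\,\mb{E}[e^{\lambda X}] = e^{-\lambda n p'}\bigl(1 - p + p e^{\lambda}\bigr)^n,
\]
so the task reduces to minimizing $f(\lambda) := -\lambda p' + \log(1 - p + p e^{\lambda})$ over $\lambda \ge 0$. Differentiating and setting $f'(\lambda) = 0$ yields
\[
\frac{p e^{\lambda}}{1 - p + p e^{\lambda}} = p',
\]
which solves to $e^{\lambda^\ast} = \frac{p'(1-p)}{p(1-p')}$. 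The hypothesis $p \le p' \le 1$ ensures $\lambda^\ast \ge 0$, so this optimum is admissible. Substituting back, one computes $1 - p + p e^{\lambda^\ast} = (1-p)/(1-p')$, so
\[
f(\lambda^\ast) = -p'\log\!\frac{p'(1-p)}{p(1-p')} + \log\!\frac{1-p}{1-p'} = -p'\log\!\frac{p'}{p} - (1-p')\log\!\frac{1-p'}{1-p},
\]
and multiplying by $n$ gives precisely the claimed exponent.

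There is no real obstacle here — this is a textbook optimization and the only things to check are the sign of $\lambda^\ast$ (handled by $p \le p'$) and the algebraic simplification at the optimum. The one minor edge case worth noting is when $p' = 1$, in which case one should interpret the second term via $\lim_{p' \to 1}(1-p')\log\frac{1-p'}{1-p} = 0$, so the bound reads $\mb{P}[X \ge n] \le p^n$, which is trivially correct since the event forces $X = n$. Similarly, the degenerate case $p = 0$ (with $p' > 0$) forces the event to have probability $0$, consistent with the exponent being $-\infty$.
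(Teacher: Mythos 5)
The paper states this lemma without proof, treating it as the standard Chernoff--Hoeffding bound; your exponential-moment argument (Markov applied to $e^{\lambda X}$, then optimize in $\lambda$) is the standard derivation and is correct. Your verification that $p \le p'$ makes $\lambda^\ast \ge 0$, the algebraic simplification at the optimum to the relative-entropy exponent, and the handling of the edge cases $p'=1$ and $p=0$ are all right.
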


\section{Fourier analytic estimates for restricted digit sets}\label{sec:rest-digit}
In this section, we consider certain generalizations of Maynard \cite[Section~5]{May22} to sets of integers where the digit set for each position may differ. The proof closely follows those in \cite[Section~5]{May22} and that only very mild conditions on the digit set are required is essentially present in the discussion in \cite[Section~9]{May22}.
\begin{definition}\label{def:product-set}
Let $\vec{\mc{B}} = (\mc{B}_0,\mc{B}_1,\ldots,\mc{B}_{k-1})$ be subsets of $\{0,\ldots,g-1\}$. Define the measure $\mu_{\vec{\mc{B}}}$ to be the sum of Dirac masses (each of weight $1$) on the set of integers 
\[z = \sum_{j=0}^{k-1}z_jg^{j}\]
where $z_j\in \mc{B}_j$.
\end{definition}
Throughout this section we will be concerned with the cases where either:
\begin{itemize}
    \item $\mc{B}_i$ is a singleton
    \item $\mc{B}_i$ is an interval $\{a_1,a_1+1,\ldots,a_2\}$ except at most $2$ elements have been removed. 
\end{itemize}
Note that the first case is technically subsumed by the second; the distinction between intervals and ``singletons'' will play a larger role in the subsequent analysis. 

The first estimate we require is an $L^{1}$-estimate of the Fourier transform; this is analogous to \cite[Lemma~5.1]{May22}.
\begin{lemma}\label{lem:L1}
There exists a constant $C = C_{\ref{lem:L1}}\ge 1$ such that the following holds. Suppose that $z\sim \mu_{\vec{\mc{B}}}$ where $\mc{B}_i$ are intervals missing at most $2$ elements. We have that 
\[\int_{\Theta\in \mbf{R}/\mbf{Z}}\big|\wh{\mu_{\vec{\mc{B}}}}(\Theta)\big|~d\Theta \le (C\log g)^{k}.\]
\end{lemma}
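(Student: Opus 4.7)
The plan is to exploit the product structure of $\mu_{\vec{\mc{B}}}$ at the Fourier level and then reduce the resulting integral to a one-dimensional harmonic sum by iteration, in the spirit of Maynard \cite[Lemma~5.1]{May22}. First, because the digits of $z \sim \mu_{\vec{\mc{B}}}$ are chosen independently from the $\mc{B}_j$, the Fourier transform factors as
\[\wh{\mu_{\vec{\mc{B}}}}(\Theta) = \prod_{j=0}^{k-1} F_j(g^j \Theta), \qquad F_j(\Theta) := \sum_{b \in \mc{B}_j} e(-b\Theta).\]

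Next, I would bound each $|F_j|$ by a standard geometric-sum estimate. Writing $\mc{B}_j$ as an interval of size at most $g$ minus at most two elements, the geometric sum on the full interval has modulus at most $\min(g, 1/(2\|\Theta\|))$ (where $\|\Theta\|$ denotes distance to the nearest integer), and the at most two missing elements contribute $O(1)$. Using $1/\|\Theta\|\ge 2$ to absorb the additive constant, this gives $|F_j(\Theta)| \le C_0\min(g, 1/\|\Theta\|)$ for an absolute $C_0$, so
\[\int_{\mbf{R}/\mbf{Z}}\big|\wh{\mu_{\vec{\mc{B}}}}(\Theta)\big|\,d\Theta \le C_0^k\, I_k, \qquad I_k := \int_0^1 \prod_{j=0}^{k-1} \min(g, 1/\|g^j \Theta\|)\,d\Theta.\]

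The heart of the proof is then the bound $I_k \le (C_1 \log g)^k$, obtained by peeling off one variable at a time. Substituting $\Theta = (i+u)/g$ with $i \in \{0,\ldots,g-1\}$ and $u \in [0,1)$ yields $g^j\Theta \equiv g^{j-1} u \pmod{1}$ for every $j \ge 1$, so
\[I_k = \int_0^1 \bigg(\frac{1}{g}\sum_{i=0}^{g-1} \min(g, 1/\|(i+u)/g\|)\bigg)\prod_{\ell=0}^{k-2}\min(g, 1/\|g^\ell u\|)\,du.\]
The inner parenthetical is bounded uniformly in $u$ by $O(\log g)$: at most the two values $i=0$ and $i=g-1$ saturate the $g$-cap (when $(i+u)/g$ is within $1/g$ of an integer), and the remaining terms are of the form $g/(i+u)$ or $g/(g-i-u)$ with $i+u \ge 1$ or $g-i-u \ge 1$, so they sum as $\sum_{i\ge 1} 1/i \ll \log g$ after dividing through by $g$. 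Hence $I_k \le C_1\log g \cdot I_{k-1}$, and induction with the base case $I_0 = 1$ gives $I_k \le (C_1\log g)^k$. Taking $C = C_0 C_1$ completes the proof.

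The only genuine analytic content is the harmonic-sum bound on the inner parenthetical above; one must be careful that the two indices where $(i+u)/g$ approaches an integer do not blow up, which is precisely why the truncation at $g$ is necessary. Everything else is a formal manipulation using the product structure of $\mu_{\vec{\mc{B}}}$, the periodicity of the phases $g^j\Theta$ modulo $1$, and the classical geometric-series bound for Fourier coefficients of intervals.
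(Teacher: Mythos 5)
Your proof is correct and follows essentially the same approach as the paper: factor the Fourier transform over digit positions, bound each factor by $C_0\min(g,1/\|g^j\Theta\|)$, and expand $\Theta$ in base $g$ so the integral factors into $k$ harmonic sums each contributing $O(\log g)$. The only cosmetic difference is that you peel off one digit at a time via the substitution $\Theta=(i+u)/g$ and induct on $k$, whereas the paper performs the base-$g$ expansion of $\Theta$ globally and bounds the resulting $k$-fold sum directly; the two packagings of the change of variables are equivalent.
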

\begin{proof}
Note that
\[\big|\wh{\mu_{\vec{\mc{B}}}}(\Theta)\big| = \prod_{j=0}^{k-1}\big|\sum_{z_j\in\mc{B}_j}e\big(z_jg^{j}\Theta\big)\big|.\]
Via summing a geometric series, we have that
\[\sum_{z_j\in\mc{B}_j}|e\big(z_jg^{j}\Theta\big)|\le \min\big(|\mc{B}_j|, (2 + 2\snorm{g^{j}\Theta}_{\mbf{R}/\mbf{Z}}^{-1})\big).\]
If $\Theta = \sum_{i=1}^{\infty}\Theta_ig^{-i}$ where $|\Theta_i|\in [0,g-1]$, then $\snorm{g^{i}\Theta}_{\mbf{R}/\mbf{Z}} = \snorm{\Theta_i/g + \eps_i}_{\mbf{R}/\mbf{Z}}$ with $0\le \eps_i< 1/g$. The desired bound then follows (via summing a harmonic series) as 
\begin{align*}
\int_{\Theta\in \mbf{R}/\mbf{Z}}\big|\wh{\mu_{\vec{\mc{B}}}}(\Theta)\big|~d\Theta &\le g^{-k}\sum_{\Theta_1,\ldots,\Theta_k\in [0,g-1]}\prod_{i=1}^{k}\min\bigg(g, 2 + 2\max\bigg(\frac{g}{\Theta_i + 1},\frac{g}{g-1-\Theta_i}\bigg)\bigg)\\
&\le (O(\log g))^k. \qedhere
\end{align*}
\end{proof}

We next prove the analog of \cite[Lemma~10.5]{May17}; the proof is essentially identical modulo notational differences. 
\begin{lemma}\label{lem:large-sieve}
There exists a constant $C = C_{\ref{lem:L1}}\ge 1$ such that the following holds. Let $Q\ge d\ge 1$ be integers and suppose that $z\sim \mu_{\vec{\mc{B}}}$ where  $\mc{B}_i$ are intervals missing at most $2$ elements. Then
\[\sup_{\beta\in \mbf{R}}\sum_{\substack{b\sim Q\\ d|b}}\sum_{\substack{0<a<b\\(a,b) = 1}}\bigg|\wh{\mu_{\vec{\mc{B}}}}\bigg(\frac{a}{b} + \beta\bigg)\bigg|\ll_{g}\inf_{0\le t\le k-1}\prod_{t\le j\le k-1}|\mc{B}_j| \cdot \bigg(\frac{Q^2}{d} \cdot (C\log g)^t + (Cg\log g)^t\bigg).\]
\end{lemma}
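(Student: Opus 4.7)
The plan is to follow the argument of \cite[Lemma~10.5]{May17}. The key structural input is the product formula
\[\wh{\mu_{\vec{\mc{B}}}}(\Theta)=\prod_{j=0}^{k-1}\sum_{z\in\mc{B}_j}e(zg^j\Theta),\]
which factors cleanly at any cutoff $t\in\{0,\ldots,k-1\}$. I would bound the factors for $j\ge t$ trivially by $|\mc{B}_j|$ and pull them outside the supremum and sum. This reduces the claim to showing that, for each fixed $t$,
\[S_t(\beta):=\sum_{\substack{b\sim Q\\ d\mid b}}\sum_{\substack{0<a<b\\(a,b)=1}}\bigg|\wh{\mu_{\vec{\mc{B}}^{<t}}}\bigg(\frac{a}{b}+\beta\bigg)\bigg|\ll_g\frac{Q^2}{d}(C\log g)^t+(Cg\log g)^t\]
uniformly in $\beta$, where $\mu_{\vec{\mc{B}}^{<t}}$ is the digit measure using only positions $0,\ldots,t-1$ (so it is supported on integers in $[0,g^t)$).

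I would then argue by splitting into two complementary regimes and proving that each gives one of the two terms on the right-hand side. In the \emph{dense regime} (relevant when $Q^2/d\gtrsim g^t$), the rationals $a/b$ sample $\wh{\mu_{\vec{\mc{B}}^{<t}}}$ on a scale finer than its natural oscillation scale $g^{-t}$. Combining the spacing lower bound $|a_1/b_1-a_2/b_2|\ge 1/(b_1b_2)$, the Lipschitz estimate $|\wh{\mu_{\vec{\mc{B}}^{<t}}}(\Theta_1)-\wh{\mu_{\vec{\mc{B}}^{<t}}}(\Theta_2)|\ll g^t\snorm{\Theta_1-\Theta_2}_{\mbf{R}/\mbf{Z}}\cdot\prod_{j<t}|\mc{B}_j|$, and the $L^1$ bound $\int|\wh{\mu_{\vec{\mc{B}}^{<t}}}|\le(C\log g)^t$ from \cref{lem:L1}, one obtains an approximation of $S_t(\beta)$ by $(Q^2/d)$ copies of the integral average, yielding $S_t(\beta)\ll (Q^2/d)(C\log g)^t$.

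In the \emph{sparse regime} ($Q^2/d\lesssim g^t$), I would argue position-by-position using the geometric-series bound
\[\bigg|\sum_{z\in\mc{B}_j}e(zg^j\Theta)\bigg|\le\min\!\Big(|\mc{B}_j|,\,2+2\snorm{g^j\Theta}_{\mbf{R}/\mbf{Z}}^{-1}\Big).\]
For each $b\sim Q$ with $d\mid b$, write $b=eb'$ with $e=\gcd(b,g^\infty)$ and $\gcd(b',g)=1$, so that $g^ja/b\bmod 1$ reduces to a fraction with denominator $b/\gcd(b,g^j)$ in lowest terms. Summing the harmonic-type bound over the coprime residues $(a,b)=1$ and then over $b$ contributes a factor of $O(g\log g)$ per digit position, and the product over $j<t$ yields the $(Cg\log g)^t$ bound.

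The main obstacle I anticipate is the bookkeeping in the sparse regime: tracking how the divisibility $d\mid b$ interacts with the factorization $b=\gcd(b,g^\infty)\cdot b'$, and summing the harmonic contribution over the $g$-part and the $g$-coprime part of $b$ separately without losing the correct power of $d$. A secondary subtlety is that $|\wh{\mu_{\vec{\mc{B}}^{<t}}}|$ is not itself a trigonometric polynomial (only its square is), so the Riemann-sum argument in the dense regime cannot invoke a classical Marcinkiewicz--Zygmund inequality directly; instead one must work with the pointwise Lipschitz bound and absorb the error coming from clustered Farey fractions into the $(Cg\log g)^t$ term. Once the two regimes are combined, the additive form of the bound emerges and the lemma follows.
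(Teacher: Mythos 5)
Your proposal correctly identifies the reduction to the truncated measure $\mu_{\vec{\mc{B}}^{<t}}$ at an arbitrary cutoff $t$, and it correctly locates the $L^1$ estimate of \cref{lem:L1} as the engine producing the $(C\log g)^t$ factor. However, there is a genuine gap in the dense-regime argument, and it is precisely at the point where you flag the ``secondary subtlety.'' Using the pointwise Lipschitz bound $|\wh{\mu}(\Theta_1)-\wh{\mu}(\Theta_2)|\ll g^t\snorm{\Theta_1-\Theta_2}\prod_{j<t}|\mc{B}_j|$ to compare each $|\wh{\mu}(a/b+\beta)|$ with its average over an interval of length $\delta=d/Q^2$, and then summing over the $\asymp Q^2/d$ Farey fractions, produces an aggregate error of size $g^t\prod_{j<t}|\mc{B}_j|$, which can be as large as $g^{2t}$. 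This is far larger than the $(Cg\log g)^t$ term you propose to absorb it into: for $g$ large, $g^{2t}\gg g^t(C\log g)^t$. So the absorption fails, and the dense-regime estimate does not come out as claimed.

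The missing idea is to avoid the $L^\infty$ bound on $\wh{\mu}'$ entirely. Writing the deviation via the fundamental theorem of calculus, $\big||\wh{\mu}(\Theta)|-\frac{1}{2\delta}\int_{-\delta}^{\delta}|\wh{\mu}(\Theta+s)|\,ds\big|\le\int_{-\delta}^{\delta}|\wh{\mu}'(\Theta+u)|\,du$, and then summing over the $\Omega(\delta)$-spaced fractions $a/b+\beta$ (so the translated intervals cover each point $O(1)$ times), one obtains
\[\sup_{\beta}\sum_{\substack{b\sim Q\\ d|b}}\sum_{\substack{0<a<b\\(a,b)=1}}\big|\wh{\mu_{\vec{\mc{B}}^{<t}}}(\tfrac{a}{b}+\beta)\big|\ll \frac{Q^2}{d}\int_{\mbf{R}/\mbf{Z}}|\wh{\mu_{\vec{\mc{B}}^{<t}}}| + \int_{\mbf{R}/\mbf{Z}}|\wh{\mu_{\vec{\mc{B}}^{<t}}}'|.\]
The first integral is controlled by \cref{lem:L1}. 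For the second, differentiating the product formula shows $|\wh{\mu_{\vec{\mc{B}}^{<t}}}'(\Theta)|\le g^t\sup_{\ell<t}\prod_{j<t,\,j\ne\ell}\big|\sum_{z\in\mc{B}_j}e(zg^j\Theta)\big|$; each such product is the Fourier transform (in absolute value) of a digit measure with $\mc{B}_\ell$ replaced by a singleton, so \cref{lem:L1} applies again and gives $\int|\wh{\mu}'|\ll g^t(C\log g)^{t-1}\ll(Cg\log g)^t$. This single computation delivers both terms of the stated bound simultaneously, so the case split into dense and sparse regimes is unnecessary. The sparse-regime argument you sketch (summing $\min(|\mc{B}_j|,2+2\snorm{g^j\Theta}^{-1})$ position by position over rationals $a/b$) would also be difficult to close as written, because the quantities $g^j a/b\bmod 1$ for different $j$ are strongly coupled along the Farey fractions and do not decouple the way the full $L^1$ integral does; the paper sidesteps this by never summing over $(a,b)$ directly but passing immediately to the integral.
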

\begin{proof}
For $t \in [k]$ fixed, let $\vec{\mc{B}}' = (\mc{B}_0,\ldots,\mc{B}_{t-1})$. We have that 
\[\big|\wh{\mu_{\vec{\mc{B}}}}(\Theta)\big|\le \big|\wh{\mu_{\vec{\mc{B}}'}}(\Theta)\big| \cdot \prod_{t\le j\le k-1}|\mc{B}_j|.\]
Thus, by replacing $\mc{B}$ with $\mc{B}'$ it suffices to consider the case when $t = k-1$.

Note that the set of fractions $a/b$ with $d|b$, $(a,b) = 1$, and $b\sim Q$ are $\Omega(d/Q^2)$ well-spaced. Furthermore, by the fundamental theorem of calculus, for all $\Theta\in \mbf{R}/\mbf{Z}$ and $\delta>0$, we have that
\[\big|\wh{\mu_{\vec{\mc{B}}}}(\Theta)\big|\lesssim \frac{1}{\delta}\int_{-\delta}^{\delta}\big|\wh{\mu_{\vec{\mc{B}}}}(\Theta+s)\big|~ds + \frac{1}{\delta}\int_{-\delta}^{\delta}\big|\wh{\mu_{\vec{\mc{B}}}}'(\Theta+s)\big|~ds;\]
here $\wh{\mu_{\vec{\mc{B}}}}'(\Theta) = \frac{d}{dt}\wh{\mu_{\vec{\mc{B}}}}(t)|_{t = \Theta}$. Taking $\delta = d/Q^2$ and summing over all fractions, we have that 
\begin{align*}
\sup_{\beta\in \mbf{R}}&\sum_{\substack{b\sim Q\\ d|b}}\sum_{\substack{0<a<b\\(a,b) = 1}}\bigg|\wh{\mu_{\vec{\mc{B}}}}\bigg(\frac{a}{b} + \beta\bigg)\bigg|\ll \frac{Q^2}{d}\cdot \int_{\mbf{R}/\mbf{Z}}\big|\wh{\mu_{\vec{\mc{B}}}}(s)\big|~ds + \int_{\mbf{R}/\mbf{Z}}\big|\wh{\mu_{\vec{\mc{B}}}}'(s)\big|~ds.
\end{align*}
We are using here that the set of intervals $(a/b-\delta,a/b+\delta)$ with $a$ and $b$ as specified cover each point $O(1)$ times. The first term is immediately bounded by \cref{lem:L1}. The key point (as in \cite[pg.~12]{May22}) is to observe that the second terms have a near product structure; in particular
\begin{align*}
\big|\wh{\mu_{\vec{\mc{B}}}}'(\Theta)\big| &\le \sum_{\ell = 0}^{k-1}\big|\sum_{z_\ell\in\mc{B}_\ell}z_\ell g^{\ell-1}e\big(z_\ell g^{\ell-1}\Theta\big)\big|\cdot \prod_{\substack{0\le j\le k-1\\j\neq \ell}}\big|\sum_{z_j\in\mc{B}_j}e\big(z_jg^{j-1}\Theta\big)\big|\\
&\le \sum_{\ell = 0}^{k-1}g^{\ell}(g-1)\cdot \prod_{\substack{0\le j\le k-1\\j\neq \ell}}\big|\sum_{z_j\in\mc{B}_j}e\big(z_jg^{j-1}\Theta\big)\big|\le g^{k} \sup_{0\le \ell\le k-1} \prod_{\substack{0\le j\le k-1\\j\neq \ell}}\big|\sum_{z_j\in\mc{B}_j}e\big(z_jg^{j-1}\Theta\big)\big|.
\end{align*}
Observe that the integral of the right hand side of the final inequality is of the form of the Fourier transform of a measure amenable to the hypothesis of \cref{lem:L1}. (One applies \cref{lem:L1} to measures $\mu_{\vec{B}}$ except $\mc{B}_{\ell}$ is changed to any singleton set.) Applying \cref{lem:L1}, we obtain the stated bound.
\end{proof}

\subsection{Divisor functions estimates for sets with missing digits}
We next require bounds on the divisor function when restricting attention to solutions of $x_1+x_2+x_3 = N$ where $x_i\in \mc{S}_b$. This is accomplished via converting \cref{lem:large-sieve} into Type I information and applying Landreau’s inequality. 

\begin{lemma}\label{lem:divisor-function}
Let $g\ge C_{\ref{lem:divisor-function}}$ and $A\ge 1$. We have that 
\[\sum_{\substack{x_1 + x_2 + x_3 = T\\x_i\in \mc{S}_b}}\tau(x_1)^{A}\ll_{g,A} (\log T)^{O_A(1)} \cdot \sum_{\substack{x_1 + x_2 + x_3 = T\\x_i\in \mc{S}_b}}1\]
and 
\[\sum_{\substack{x_1 + x_2 + x_3 = T\\x_i\in \mc{S}_b}}\tau(x_1 + x_2)^{A}\ll_{g,A} (\log T)^{O_A(1)} \cdot \sum_{\substack{x_1 + x_2 + x_3 = T\\x_i\in \mc{S}_b}}1.\]
\end{lemma}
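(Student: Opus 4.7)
The plan is a two-step reduction: first apply Landreau's inequality to pass from $\tau^A$ to a count over divisors $d \le T^{1/K}$, then bound that count by decomposing $x_1$ into a superposition of product measures $\mu_{\vec{\mc{B}}}$ via revealing carries (as in the introductory sketch) and applying \cref{lem:large-sieve}. Landreau's inequality supplies a constant $K = K_A$ with $\tau(n)^A \ll_A (\log n)^{O_A(1)} \#\{d \mid n : d \le n^{1/K}\}$; substituting and swapping the order of summation, both bounds reduce to the Type I statement
\[
\sum_{d \le T^{1/K}} \#\{x_1+x_2+x_3=T : x_i \in \mc{S}_b,\, x_1 \equiv c_d \imod{d}\} \ll (\log T)^{O(1)} \#\{x_1+x_2+x_3=T : x_i \in \mc{S}_b\},
\]
uniformly in the residues $c_d \in \mb{Z}$. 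The first bound corresponds to $c_d = 0$; for the second, by permutation symmetry of the sum over $(x_1,x_2,x_3)$ one has $\sum \tau(x_1+x_2)^A = \sum \tau(T-x_1)^A$, so Landreau produces the same form with $c_d = T$.

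For the reduced Type I statement, fix $x_3 \in \mc{S}_b$ and decompose $(x_1, x_2)$ by revealing the carry vector $\vec{i}$, so that the conditional distribution of $x_1$ is $\mu_{\vec{\mc{B}}}$ in the sense of \cref{def:product-set}, with each $\mc{B}_j$ an interval missing at most two elements, and the total representation count equals $\sum_{x_3,\vec{i}} \wh{\mu_{\vec{\mc{B}}}}(0)$. Fourier expanding
\[
\mbm{1}_{x_1 \equiv c_d \imod{d}} = \frac{1}{d}\sum_{a=0}^{d-1} e\!\left(\frac{a(x_1-c_d)}{d}\right)
\]
splits off a diagonal $(a=0)$ contribution of $\frac{1}{d}\wh{\mu_{\vec{\mc{B}}}}(0)$ which sums over $d \le T^{1/K}$ to $O(\log T) \wh{\mu_{\vec{\mc{B}}}}(0)$, which already exhausts the right-hand side. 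After reindexing the off-diagonal $(a \ne 0)$ by reduced fractions $a/d'$ with $d' \mid d$ and absorbing another factor of $O(\log T)$, the remainder is dominated by
\[
\log T \cdot \sum_{Q \text{ dyadic}}\; \frac{1}{Q}\sum_{d' \sim Q}\sum_{\substack{0 < a < d' \\ (a,d')=1}} \bigl|\wh{\mu_{\vec{\mc{B}}}}(a/d')\bigr|,
\]
to which \cref{lem:large-sieve} applies with its parameter $d = 1$. Choosing the truncation $t$ so that $g^t \asymp Q^2$ balances the two terms $Q^2(\log g)^t$ and $(g \log g)^t$, and provided the typical $|\mc{B}_j|$ is of order $g$, the ratio of the resulting bound to $\wh{\mu_{\vec{\mc{B}}}}(0) = \prod_j |\mc{B}_j|$ is of the form $(\log g)^{O(\log Q/\log g)}/Q$, summable over dyadic $Q \le T^{1/K}$ once $g$ is sufficiently large.

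The delicate step is controlling carry patterns for which $\prod_{j<t}|\mc{B}_j|$ is anomalously small relative to $g^t$; this happens precisely when the adjusted digit $T_j + g i_j - i_{j-1}$ lies near $0$ or near $2g-2$, forcing $|\mc{B}_j|$ to be close to $1$. I would handle this by applying the stochastic domination of \cref{lem:chernoff} with $\mc{S}_j = \{(y_1,y_2) : y_1+y_2 \in \{0,1,2g-3,2g-2\}\}$ together with \cref{lem:chernoff-hoeff}, to show that apart from a subset of $(x_3, \vec{i})$ of negligible total mass, the set of ``bad'' indices has density $O(1/g)$, so that on the complementary event $\prod_{j<t}|\mc{B}_j| \ge g^{t(1-o_g(1))}$ and the preceding bound goes through. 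The residual negligible mass is absorbed using the trivial bound $\tau(x_i)^A \le x_i^A$ together with \cref{lem:sol-bound}. Summing over $x_3$ and $\vec{i}$ then yields both claimed estimates.
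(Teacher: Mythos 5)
Your strategy (Landreau to reach a Type I estimate, then decompose $x_1$ into product measures $\mu_{\vec{\mc{B}}}$ by revealing carries and apply \cref{lem:large-sieve}) is the paper's strategy, but there are two gaps that matter, and one of them is essential.

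The essential gap concerns the divisors sharing factors with $g$ and, relatedly, the position of the window you truncate to. The paper first writes $x_1 = x_1'\,x_1^{\ast}$ with $x_1'$ supported on primes dividing $g$ and $(x_1^{\ast},g)=1$, bounds $\tau(x_1')$ by a separate (probabilistic) argument, and then applies Landreau \emph{only} to $x_1^{\ast}$, so that the resulting Type I sum ranges over $d$ with $(d,g)=1$. This coprimality is then used explicitly in the step ``Using that $(d,g)=1$, it suffices to prove\dots'' to translate the window $\mc{I}=[t_1,t_2]$ to the bottom: since $g^{t_1}$ is invertible mod $d$, the family of reduced fractions $a/d$ is preserved when one conditions out the digits below $t_1$ and above $t_2$. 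That translation is the crucial freedom, because after a \emph{single} discard event of probability $T^{-\Omega(1)}$ (the paper's inequality \cref{eq:bad-event}), one is only guaranteed that a $7/8$-fraction of $j<k/2$ have large $\mc{B}_j$; for a given scale $t$ one then finds (by pigeonhole) \emph{some} window of length $t$ that is mostly good — not the bottom one. You instead fix the window $[0,t)$ for every scale. But the event ``the bottom $t$ digits are mostly good'' has probability only $1-\exp(-\Omega(t\log g))$, which is $1-T^{-\Omega(1)}$ only once $t\gg (\log T)/(\log g)^2$. For intermediate scales $d$ the bad-bottom-window contribution to $\sum_{d\sim Q}\#\{x_1\equiv c_d\}$ cannot be absorbed: the trivial bound gives $Q\cdot\mb{P}[\text{bad}]\cdot\#\{\text{all}\}\approx g^{t/2-\Omega(t)}\#\{\text{all}\}$, which blows up (the Chernoff rate from a density-$g^{-\eta}$ event is $\Omega(\eta\,t\log g)$ with $\eta<1/2$, so one never wins the needed factor $g^{t/2}$), and the alternative route via $\#\{d\sim Q: d\mid x_1\}\le\tau(x_1)$ is circular. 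So without the coprime-to-$g$ Landreau and the window shift, the argument does not close.

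A secondary but genuine issue is your choice of bad set in \cref{lem:chernoff}. Taking $\mc{S}_j=\{(y_1,y_2):y_1+y_2\in\{0,1,2g-3,2g-2\}\}$ indeed has $|\mc{S}_j|/g^2=O(1/g)$, but the \emph{complementary} event only forces $y_1+y_2\in[2,2g-4]$, which permits $|\mc{B}_j|$ as small as $3$; the claimed conclusion $\prod_{j<t}|\mc{B}_j|\ge g^{t(1-o_g(1))}$ does not follow. What one needs (and what the paper's \cref{eq:bad-event} uses) is a polynomial threshold such as $\mc{S}_j=\{\min(y_1+y_2,\,2g-y_1-y_2)\le g^{7/8}\}$, which has density $O(g^{-1/8})$ and whose complement forces $|\mc{B}_j|\gg g^{7/8}$. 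Finally, the closing sentence should invoke the divisor bound $\tau(n)=n^{o(1)}$ rather than the trivial $\tau(x_i)^A\le x_i^A$ when absorbing the discarded mass.
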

\begin{proof}
We prove only the former; the latter is completely analogous using that $x_1 + x_2 = T-x_3$, replacing instances of $x_1$ with $T-x_3$ in a mechanical fashion and noting that various Fourier estimates on $x_3$ are equivalent to those on $T-x_3$. 
Let us assume that $g^{k}\le T<g^{k+1}$. Write $x_1 = x_1' \cdot x_1^{\ast}$ where $(x_1^{\ast},g) = 1$ and if $p|x_1'$ then $p|g$. As $\tau(x_1) = \tau(x_1') \cdot \tau(x_1^{\ast})$, it suffices to bound 
\[\sum_{\substack{x_1 + x_2 + x_3 = T\\x_i\in \mc{S}_b}}\tau(x_1')^{2A} + \tau(x_1^{\ast})^{2A}.\]
To bound the first term, note that $\phi(g)/g\ge 1/g$ and that $\tau(x_1')^{2A}\le (\ell + 1)^{O_{A,g}(1)}$ where $\ell$ is the number of digits at the end of $x_1$ which are all not coprime to $g$. For $t\le k/2$, the probability that the last $t$ digits are not coprime to $g$ is bounded by $(1 - \Omega(1/g))^{t}$. Thus, 
\begin{align*}
\sum_{\substack{x_1 + x_2 + x_3 = T\\x_i\in \mc{S}_b}}\tau(x_1')^{2A}&\ll_{g} \bigg(T^{-\Omega(1)} + \sum_{1\le t\le k/2}t^{O_{A,g}(1)}(1 - \Omega(1/g))^{t}\bigg)\cdot \bigg(\sum_{\substack{x_1 + x_2 + x_3 = T\\x_i\in \mc{S}_b}}1\bigg)\\
&\ll_{g,A}\sum_{\substack{x_1 + x_2 + x_3 = T\\x_i\in \mc{S}_b}}1.
\end{align*}

For $x_1^{\ast}$, we use Landreau’s inequality (see e.g. \cite[Lemma~3.1(ii)]{TT22}) which states that for any $\eps>0$, we have 
\[\tau(n)\ll_{\eps}\sum_{\substack{d|n\\d\le n^{\eps}}}\tau(d)^{O_\eps(1)}.\]
This implies that if $(n,g) = 1$, then 
\[\tau(n)^{A}\ll_{\eps}\bigg(\sum_{\substack{d|n\\d\le n^{\eps},~(d,g) = 1}}\tau(d)^{O_\eps(1)}\bigg)^{A}\ll_{\eps,A}\sum_{\substack{d_1,\ldots,d_{A}|n\\d_i\le n^{\eps},~(d_i,g) = 1}}\prod \tau(d_i)^{O_{\eps}(1)}\ll_{\eps,A}\sum_{\substack{d|n\\ d\le n^{A\eps},~(d,g) = 1}} \tau(d)^{O_{\eps,A}(1)}.\]
Thus, it suffices to prove that there exists $\eps>0$ such that for any $B\ge 1$ that 
\[\sum_{\substack{x_1 + x_2 + x_3 = T\\x_i\in \mc{S}_b}}\sum_{\substack{d|x_1\\ d\le T^{\eps},(d,g) = 1}}\tau(d)^{B}\ll_{B, g} (\log T)^{O_{B}(1)}\cdot \sum_{\substack{x_1 + x_2 + x_3 = T\\x_i\in \mc{S}_b}}1.\]

Note that 
\begin{align*}
\sum_{\substack{x_1 + x_2 + x_3 = T\\x_i\in \mc{S}_b}}\sum_{\substack{d|x_1\\ d\le T^{\eps},(d,g) = 1}}\tau(d)^{B}&\le \sum_{\substack{x_1 + x_2 + x_3 = T\\x_i\in \mc{S}_b}}\sum_{d\le T^{\eps},(d,g) = 1}\frac{\tau(d)^{B}}{d}\sum_{0\le a<d}e\bigg(\frac{ax_1}{d}\bigg)\\
&\le \sum_{d\le T^{\eps},(d,g) = 1}\frac{\tau(d)^{B}}{d}\sum_{0\le a<d}\bigg|\sum_{\substack{x_1 + x_2 + x_3 = T\\x_i\in \mc{S}_b}}e\bigg(\frac{ax_1}{d}\bigg)\bigg|\\
&\le \sum_{\substack{0\le a<d, (a,d) = 1\\d\le T^{\eps},~(d,g) = 1}}\bigg(\sum_{d'd\le T^{\eps}}\frac{\tau(d')^{B}}{d'}\bigg) \cdot \frac{\tau(d)^{B}}{d}\bigg|\sum_{\substack{x_1 + x_2 + x_3 = T\\x_i\in \mc{S}_b}}e\bigg(\frac{ax_1}{d}\bigg)\bigg|\\
&\le (\log T)^{O_B(1)} \cdot \sum_{\substack{0\le a<d, (a,d) = 1\\d\le T^{\eps},~(d,g) = 1}} \frac{\tau(d)^{B}}{d}\bigg|\sum_{\substack{x_1 + x_2 + x_3 = T\\x_i\in \mc{S}_b}}e\bigg(\frac{ax_1}{d}\bigg)\bigg|.
\end{align*}

It therefore suffices to prove for a scale $1\le L\le T^{\eps}$ that
\[\frac{1}{L}\cdot \sum_{\substack{0\le a<d,~(a,d) = 1\\L/2\le d\le L,~(d,g) = 1}} \bigg|\sum_{\substack{x_1 + x_2 + x_3 = T\\x_i\in \mc{S}_b}}e\bigg(\frac{ax_1}{d}\bigg)\bigg|\ll_{g} \sum_{\substack{x_1 + x_2 + x_3 = T\\x_i\in \mc{S}_b}}1\cdot L^{-1/2}.\]
Using the divisor bound that $\tau(d) = d^{o(1)}$ and summing over dyadic scales completes the proof. 
Taking $S = [k/2]$, via \cref{lem:chernoff} and \cref{lem:chernoff-hoeff} we have 
\begin{equation}\label{eq:bad-event}
\mb{P}_{\substack{x_1 + x_2+x_3 = T\\ x_i\in \mc{S}_b}}\bigg[\sum_{0\le j<k/2}\mbm{1}\bigg[g^{7/8}\ge \min\bigg(\sum_{i=1}^{2}x_{i,j}, 2g - \sum_{i=1}^{2}x_{i,j}\bigg)\bigg]\ge k/16\bigg]\le \exp\big(-\Omega(k/\log g)\big) = T^{-\Omega(1)}.
\end{equation}

Reveal $\big(\sum_{i=1}^{2}x_{i,j}\big)_{0\le j\le k}$ and $x_{1,j}$ for $j>k/2$. We have that by discarding a set with probability $T^{-\Omega(1)}$, we may decompose $(x_1|x_1 + x_2 + x_3 = T)$ into product measures $\mu_{\vec{\mc{B}}}$ where $x_{1,i}$ is uniform on a set $\mc{B}_i$ with $\mc{B}_i$ are intervals minus at most one point, $|\mc{B}_i|\ge g^{4/5}$ for at least $7k/16$ of the sets with $i<k/2$, and $|\mc{B}_i| = 1$ for $i\ge k/2$. This is exactly the form of \cref{def:product-set} with the additional guarantee that $|\mc{B}_i|\ge g^{4/5}$ for at least $7k/16$ of the sets with $i<k/2$. This is accomplished so by conditioning on the carries of the digit addition between $x_1$ and $x_2$; \cref{eq:bad-event} guarantees precisely the required regularity on $|\mc{B}_j|$.

Let $t = \lceil \log L/\log g\rceil$; there exists a segment $\mc{I}\subseteq [k/2]$ with $|\mc{I}| = t$ and for at least $7t/8$ elements in $i\in \mc{I}$ we $|\mc{B}_i|\ge g^{4/5}$. Via partitioning further, we may assume that the remaining $\mc{B}_i$ are singletons and it suffices to prove under these conditions that
\[\frac{1}{L}\cdot \sum_{\substack{0\le a<d,~(a,d) = 1\\L/2\le d\le L,~(d,g) = 1}} \bigg|\sum_{x\sim \mu_{\vec{\mc{B}}}}e\bigg(\frac{ax}{d}\bigg)\bigg|\ll_{g} \prod_{i=0}^{k-1}|\mc{B}_i|\cdot L^{-1/2}.\]
The cases where the underlying $|\mc{B}_i|$ are not sufficiently large (e.g. live in the failure event given in \cref{eq:bad-event}) are handled by taking trivial bounds (and that $\eps$ is sufficiently small). 

Let $\mc{I} = [t_1,t_2]$ (where $t_2 - t_1 = t-1$), $\mc{B}' = (\mc{B}_0,\ldots,\mc{B}_{t_2})$ and note that it suffices to prove 
\[\frac{1}{L}\cdot \sum_{\substack{0\le a<d,~(a,d) = 1\\L/2\le d\le L,~(d,g) = 1}} \bigg|\sum_{x\sim \mu_{\vec{\mc{B}}'}}e\bigg(\frac{ax}{d}\bigg)\bigg|\ll_{g} \prod_{i=0}^{t_2}|\mc{B}_i|\cdot L^{-1/2}.\]
Note that as we have forced $\mc{B}_0,\ldots,\mc{B}_{t_1-1}$ to be singletons, we have that $x = \bar{x} + g^{t_1}\cdot x^{\ast}$ where $x^{\ast}$ is drawn from $\mu_{\vec{\mc{B}}^{\ast}}$ with $\mc{B}^{\ast} = (\mc{B}_{t_1},\ldots,\mc{B}_{t_2})$. Using that $(d,g) = 1$, it suffices to prove that 
\[\frac{1}{L}\cdot \sum_{\substack{0\le a<d,~(a,d) = 1\\L/2\le d\le L,~(d,g) = 1}} \bigg|\sum_{x\sim \mu_{\vec{\mc{B}}^{\ast}}}e\bigg(\frac{ax}{d}\bigg)\bigg|\ll_{g} \prod_{i=t_1}^{t_2}|\mc{B}_i|\cdot L^{-1/2}.\]
This result follows from \cref{lem:large-sieve} (taking $d = 1$ and $Q = L$) as 
\begin{align*}
\frac{1}{L}\cdot \sum_{\substack{0\le a<d,~(a,d) = 1\\L/2\le d\le L,~(d,g) = 1}} \bigg|\sum_{x\sim \mu_{\vec{\mc{B}}^{\ast}}}e\bigg(\frac{ax}{d}\bigg)\bigg|&\ll_g L \cdot (C\log g)^{t} + L^{-1}(Cg\log g)^t\\
&\le \prod_{i=t_1}^{t_2}|\mc{B}_i| \cdot \big((C\log g) g^{-4/7}\big)^{t} + L^{-1}(Cg^{3/7}\log g)^{t}\big)\\
&\ll_{g} \prod_{i=t_1}^{t_2}|\mc{B}_i| \cdot L^{-1/2};
\end{align*}
the final inequality holds provided $g$ is larger than an absolute constant.
\end{proof}

\section{Fourier approximant of the primes and zero-density estimates}

We next define a Fourier approximant of the primes with a \emph{power-saving} error term. The necessary approximant appears in work of Green \cite[Part~II]{Gre22}; we recall a number of definitions from \cite{Gre22}.
\begin{definition}\label{def:zeros}
Let $Q$ and $\sigma_0$ be a positive real parameters. Let $\Xi_{Q}(\sigma_0)\subseteq \mbf{C}$ be the multiset of all $\rho := \beta + i\gamma = 1-\sigma + i\gamma$ with $\sigma\le \sigma_0$ and $|\gamma|\le Q$ which are zeros of a Dirichlet $L$-function $L(s,\chi)$ with conductor at most $Q$.
\end{definition}

We next require a series of definitions in order to succinctly define the required approximant.
\begin{definition}\label{def:aux-func}
Suppose that $\psi$ is a Dirichlet character to modulus $r$. The Gauss sum of $\psi$ is
\[\tau(\psi) := \sum_{b\in (\mbf{Z}/r\mbf{Z})^{\ast}}\psi(b)e\bigg(\frac{b}{r}\bigg).\]

If $\chi$ is a primitive Dirichlet character to modulus $q$, $r$ a positive integer with $q|r$, and $b\in (\mbf{Z}/r\mbf{Z})^{\ast}$, we define
\[c_{\chi}(b,r) := \frac{\chi(b)\mu\big(\frac{r}{q})\ol{\chi(\frac{r}{q})}\ol{\tau(\chi)}}{\phi(r)}.\]
If $q\nmid r$, we set $c_{\chi}(b,r) = 0$. Given this, we set
\begin{equation}\label{eq:ApproximantError}
F_{\chi,Q}(n) :=\sum_{\substack{q|r\\r/q\le Q}}\sum_{b\in (\mbf{Z}/r\mbf{Z})^{\ast}}c_{\chi}(b,r)e\bigg(\frac{bn}{r}\bigg)    
\end{equation}
Furthermore, set
\begin{equation}\label{eq:ApproximantMain}
\Lambda_Q(n) := \sum_{q\le Q}\frac{\mu(q)}{\phi(q)}c_q(n),    
\end{equation}
where $c_q(n) = \sum_{a\in (\mbf{Z}/q\mbf{Z})^{\ast}}e\big(\frac{an}{q}\big)$. We finally define
\begin{equation}\label{eq:Approximant}
\Lambda_{Q,\sigma_0}(n):=\Lambda_{Q}(n) - \sum_{\rho\in \Xi_{Q}(\sigma_0)}n^{\rho-1}F_{\chi_{\rho},Q}(n).    
\end{equation}
\end{definition}
\begin{remark}
Note that 
\[\Lambda_{Q}(n) = F_{\chi,Q}(n)\]
where $\chi$ is the character which is identically one and we set $\rho = 1$ as a ``zero'' (in reality of course it is a pole). We adopt this abuse throughout in order to not unnecessarily duplicate certain arguments. 
\end{remark}

The key result we will require is the following Fourier approximation result; this is \cite[Proposition~4.6]{Gre22}. This is crucially where our results rely on the use of zero-density estimates. 
\begin{proposition}\label{prop:cruc}
Suppose that $\sigma_0\le 1/9$ and $Q\le M^{\sigma_0}$. Then 
\[\sup_{\Theta\in \mbf{R}/\mbf{Z}}\bigg|\sum_{n\le M}\big(\Lambda(n) - \Lambda_{Q,\sigma_0}(n)\big)e(n\Theta)\bigg| \le M^{1+o(1)}Q^{-1/6}.\]
\end{proposition}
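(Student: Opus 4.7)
The plan is to follow a classical Hardy--Littlewood major/minor arc analysis, with the key feature being that the ``major arc'' approximant $\Lambda_{Q,\sigma_0}$ is designed to absorb the contribution from \emph{all} Dirichlet $L$-function zeros in the truncated region $\Xi_Q(\sigma_0)$, not just the pole of $\zeta$ at $s=1$ which is captured by the Ramanujan sum expansion $\Lambda_Q(n)$. First I would fix $\Theta\in\mbf{R}/\mbf{Z}$ and invoke Dirichlet's approximation theorem to write $\Theta = a/r + \beta$ with $(a,r)=1$ and $|\beta|$ small, splitting into the cases $r\le Q$ (major arcs) and $r>Q$ (minor arcs).

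On the major arcs, the standard identity
\[ e(na/r) = \frac{1}{\phi(r)}\sum_{\chi \imod r}\ol{\chi(a)}\tau(\chi)\chi(n) \qquad (\text{for }(n,r)=1) \]
together with an induction from an imprimitive character to its primitive inducer produces precisely the Gauss-sum coefficients $c_{\chi}(b,r)$ appearing in \cref{eq:ApproximantError}. To each primitive character $\chi$ of conductor $q$ I would then apply the explicit formula
\[ \sum_{n\le M}\Lambda(n)\chi(n) = \delta_{\chi = \chi_0}M - \sum_{L(\rho,\chi)=0}\frac{M^\rho}{\rho} + O\paren{(\log qM)^{2}}, \]
twisted by $e(n\beta)$ via partial summation. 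The principal-character ``pole'' at $s=1$ rebuilds $\Lambda_Q(n)$; the zeros with $\rho\in\Xi_Q(\sigma_0)$ rebuild exactly the correction terms $n^{\rho-1}F_{\chi_\rho,Q}(n)$ in \cref{eq:Approximant}. What remains is a tail over zeros with either $|\gamma|>Q$ or $\Re\rho < 1-\sigma_0$, and this is where a Huxley-type zero-density estimate $N(\sigma,T,Q) \ll (Q^{2}T)^{c(1-\sigma)}(\log QT)^{O(1)}$ is applied to show the tail is $O(M^{1-\sigma_0/c + o(1)} + M^{1+o(1)}Q^{-1})$.

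On the minor arcs I would invoke a Vaughan-type decomposition $\Lambda = \Lambda_{\le U} + \text{Type I} + \text{Type II}$ with parameters $U,V\approx M^{1/3}$. The Type I sums are handled by direct geometric-series evaluation once we reduce to understanding $\sum_n e(n\Theta)$ over an arithmetic progression, and the Type II bilinear sums are controlled via Cauchy--Schwarz followed by the large sieve, which for $r>Q$ yields a bound of the shape $M^{1+o(1)}(Q^{-1/2} + M^{-1/6})$. Separately, $\Lambda_{Q,\sigma_0}$ itself must have negligible Fourier mass at such $\Theta$: for $\Lambda_Q$ this is a classical estimate on partial sums of $\mu(q)c_q(n)/\phi(q)$ at irrational frequencies, and for the zero-correction sum it follows from bounding $\sum_\rho |F_{\chi_\rho,Q}|$ using the zero-density estimate a second time.

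The main obstacle I foresee is balancing $\sigma_0$ and $Q$ so that the major-arc zero-density error and the minor-arc Vaughan error are \emph{simultaneously} no larger than $M^{1+o(1)}Q^{-1/6}$; the constraint $\sigma_0\le 1/9$ in the hypothesis almost certainly reflects exactly this optimization, together with the condition $Q\le M^{\sigma_0}$ which prevents the zero-correction terms $n^{\rho-1}F_{\chi_\rho,Q}(n)$ from themselves blowing up. A secondary technical nuisance is the book-keeping surrounding the condition $q\mid r,\ r/q\le Q$ in \cref{eq:ApproximantError}: one has to verify that this particular cutoff correctly matches the Fourier content produced by expanding $e(na/r)$ in characters of every modulus $r\le Q$, which requires carefully tracking how zeros of an imprimitive $L$-function relate to zeros of its primitive inducer. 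Once these accountings are done, summing the errors over the dyadic range of $r$ and $|\gamma|$ gives the claimed $M^{1+o(1)}Q^{-1/6}$ bound uniformly in $\Theta$.
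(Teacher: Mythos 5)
The paper does not prove this proposition; it is imported wholesale as \cite[Proposition~4.6]{Gre22}, and the paper's ``proof'' is simply the citation (the proposition is restated and then used as a black box). Your sketch is a plausible reconstruction of the kind of circle-method argument one would run to prove it --- Dirichlet approximation splitting into major and minor arcs, the explicit formula matched term-by-term against the approximant on the major arcs so that the pole rebuilds $\Lambda_Q$ and each zero $\rho\in\Xi_Q(\sigma_0)$ rebuilds $n^{\rho-1}F_{\chi_\rho,Q}(n)$, and Vaughan/Vinogradov on the minor arcs --- and you correctly identify the two real pressure points: the $q\mid r,\ r/q\le Q$ cutoff in \cref{eq:ApproximantError} and the numerical balance producing the exponent $1/6$.

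Two places where the sketch would need tightening before it is a proof. First, the zero-density input you want is not Huxley's per-modulus bound but a \emph{grand} zero-density estimate over all conductors $\le Q$ and all $|\gamma|\le Q$ simultaneously, exactly the Jutila bound the paper records as \cref{prop:zero-dens} with exponent $6$; the $Q^{-1/6}$ in the conclusion is the reciprocal of that exponent, and if you use a per-modulus estimate the subsequent sum over the $\phi(q)$ characters and the moduli $q\le Q$ does not close. Second, calling the minor-arc bound for $\Lambda_{Q,\sigma_0}$ ``a classical estimate'' undersells a genuine subtlety: $\sum_{n\le M}\Lambda_Q(n)e(n\Theta)$ has its Fourier mass concentrated at rationals $a/q$ with $q\le Q$, and for $\Theta$ near the major/minor boundary some such $a/q$ can sit within $O(1/M)$ of $\Theta$, so a naive term-by-term geometric-series bound returns something of size $\gg M$ rather than a power saving. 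The resolution is to estimate the exponential sum of the \emph{difference} $\Lambda-\Lambda_{Q,\sigma_0}$ as a single object near those frequencies --- which is precisely what the $q\mid r$ structure in the definition of $F_{\chi,Q}$ is built to make possible --- rather than bounding the two pieces independently on the minor arcs.
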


A key point in our analysis is that \cref{lem:L1} combines naturally with \cref{prop:cruc}; this is essentially Fourier inversion.
\begin{lemma}\label{lem:expan}
There exists a constant $C = C_{\ref{lem:expan}}\ge 1$ such that following holds. For $\sigma_0\le 1/9$ and $Q\le M^{\sigma_0}$, then 
\[\bigg|\sum_{n\le M}\big(\Lambda(n) - \Lambda_{Q,\sigma_0}(n)\big)e(n\Theta) \cdot \mbm{1}_{\mc{S}_{b,M}}(n)\bigg|\ll M^{1+o(1)}Q^{-1/6}(C\log g)^{k}.\]
\end{lemma}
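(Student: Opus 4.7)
The plan is to deduce \cref{lem:expan} from \cref{prop:cruc} and \cref{lem:L1} via a direct Fourier-inversion argument; as the remark just before the statement indicates, this is ``essentially Fourier inversion''. The essential observation is that, by inspection of \cref{def:product-set}, the indicator $\mbm{1}_{\mc{S}_{b,M}}$ is \emph{exactly} the measure $\mu_{\vec{\mc{B}}}$ associated to $\vec{\mc{B}} = (\mc{B}_0,\ldots,\mc{B}_{k-1})$ with $\mc{B}_j = \{0,\ldots,g-1\}\setminus\{b\}$ for every $j$. Each such $\mc{B}_j$ is an interval missing a single element, so \cref{lem:L1} applies and yields
\[
\int_{\mbf{R}/\mbf{Z}} \bigl|\wh{\mbm{1}_{\mc{S}_{b,M}}}(\alpha)\bigr|\, d\alpha \le (C_{\ref{lem:L1}}\log g)^{k}.
\]

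Next I would expand $\mbm{1}_{\mc{S}_{b,M}}$ by Fourier inversion, writing
\[
\mbm{1}_{\mc{S}_{b,M}}(n) = \int_{\mbf{R}/\mbf{Z}} \wh{\mbm{1}_{\mc{S}_{b,M}}}(\alpha) e(n\alpha)\, d\alpha,
\]
plug this into the left-hand side of \cref{lem:expan}, and interchange the (finite) sum over $n$ with the integral over $\alpha$. This rewrites the quantity to be bounded as
\[
\int_{\mbf{R}/\mbf{Z}} \wh{\mbm{1}_{\mc{S}_{b,M}}}(\alpha) \biggl(\sum_{n \le M}\bigl(\Lambda(n)-\Lambda_{Q,\sigma_0}(n)\bigr) e\bigl(n(\Theta+\alpha)\bigr)\biggr) d\alpha.
\]
The hypotheses on $\sigma_0$ and $Q$ are exactly those of \cref{prop:cruc}, so the inner sum is bounded in absolute value by $M^{1+o(1)}Q^{-1/6}$ \emph{uniformly} in the frequency. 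I can thus pull this uniform bound outside the integral and combine it with the $L^{1}$ estimate from the previous paragraph to obtain the stated inequality.

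There is no real obstacle in this argument. The two ingredients that make it work are (i) the product-of-digit-sets structure of $\mbm{1}_{\mc{S}_{b,M}}$ which is precisely the hypothesis needed to apply \cref{lem:L1}, and (ii) the uniformity over frequencies in \cref{prop:cruc}, which ensures the estimate survives the shift $\Theta \mapsto \Theta+\alpha$ introduced by Fourier inversion. Together these realise the standard $L^{1}$-$L^{\infty}$ duality between a Fourier transform and an exponential sum.
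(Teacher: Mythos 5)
Your proposal is correct and takes essentially the same route as the paper: Fourier-inverting $\mbm{1}_{\mc{S}_{b,M}}$, exchanging sum and integral, bounding the shifted exponential sum uniformly via \cref{prop:cruc}, and controlling the remaining $L^1$ integral via \cref{lem:L1}. The only addition is that you make explicit the identification of $\mbm{1}_{\mc{S}_{b,M}}$ with a product measure $\mu_{\vec{\mc{B}}}$, which the paper leaves implicit.
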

\begin{proof}
Notice by \cref{prop:cruc,lem:L1} and Fourier inversion that
\begin{align*}
\bigg|\sum_{n\le M}\big(\Lambda(n) -& \Lambda_{Q,\sigma_0}(n)\big)e(n\Theta) \cdot \mbm{1}_{\mc{S}_{b,M}}(n)\bigg|\\ &=\bigg|\int_{\Theta'\in \mbf{R}/\mbf{Z}} \sum_{n\le M}\big(\Lambda(n) - \Lambda_{Q,\sigma_0}(n)\big)e(n(\Theta + \Theta'))\wh{\mbm{1}_{\mc{S}_{b,M}}}(\Theta')~d\Theta'\bigg|\\
&\le M^{1+o(1)}Q^{-1/6}\cdot\int_{\Theta'\in \mbf{R}/\mbf{Z}} \big|\wh{\mbm{1}_{\mc{S}_{b,M}}}(\Theta')\big|~d\Theta'\ll M^{1+o(1)}Q^{-1/6}(C\log g)^{k}.\qedhere
\end{align*}
\end{proof}

We finally require the following grand zero density estimate of Julita \cite{Jut77}; we remark that we do not require a log-free zero density estimate for our analysis. 
\begin{proposition}\label{prop:zero-dens}
Fix $\eps>0$. Uniformly for $\alpha \ge 4/5$, we have that 
\[\#\big\{\rho\in \Xi_{Q}(\alpha):\mf{R}\rho\ge \alpha\big\}\ll_{\eps} Q^{(6+\eps)(1-\alpha)}.\]
We have counted zeros with multiplicity; here the implied constants are computable.
\end{proposition}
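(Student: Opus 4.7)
The plan is to follow the classical Halász--Montgomery zero-detection scheme, combined with the hybrid large sieve, since \cref{prop:zero-dens} is exactly the ``grand'' zero-density estimate that such a scheme produces when one sums over both characters and ordinates at once. First, I would reduce to counting zeros of primitive $L$-functions (the correction from imprimitive characters is negligible) and, for each primitive $\chi$ of conductor $q\le Q$ and each zero $\rho=\beta+i\gamma \in \Xi_Q(\alpha)$ with $\beta\ge\alpha$, construct a mollifier $M_X(s,\chi)=\sum_{n\le X}\mu(n)\chi(n)n^{-s}$ approximating $L(s,\chi)^{-1}$, where $X$ is a small power of $Q$ to be optimized. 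A smoothed truncated Perron argument then gives a Dirichlet polynomial $D_\chi(s)$ of length at most $Q^{O(1)}$, with bounded coefficients, such that $|D_\chi(\rho)|\gg 1$ at every zero counted; this converts the counting problem to bounding the number of triples $(\chi,\gamma)$ on which $|D_\chi(\beta+i\gamma)|$ is large.

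Second, I would separate the zeros into a collection well-spaced in $\gamma$ (losing only a $\log Q$ factor by standard zero-density spacing estimates of Montgomery) and apply Gallagher's hybrid large sieve: for Dirichlet polynomials $A(s,\chi)=\sum_{n\sim N}a_n\chi(n)n^{-s}$ and well-spaced points $(\chi_j,t_j)$ with $|t_j|\le T$,
\[
\sum_j \big|A(it_j,\chi_j)\big|^2 \ll \big(Q^2T+N\big)\sum_{n\sim N}|a_n|^2.
\]
To improve the trivial application one raises $D_\chi$ to a suitable power $k$ before applying the inequality, producing a polynomial of length roughly $X^k$; Hölder's inequality and partitioning into dyadic scales reduces to a clean mean-value bound. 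The final step is to balance the mollifier length $X$, the power $k$, and the detection length against $Q$ and $T\asymp Q$ so that the contribution of both $Q^2T$ and $N$ in the large sieve is comparable, and to invoke the approximate functional equation to replace an $L$-value at $\rho$ by a short sum of length $(qT)^{1/2}\le Q$.

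The main obstacle is obtaining precisely the constant $6$ in the exponent rather than a larger numerical constant such as $12$ one would get from a naive fourth-moment argument. This requires Jutila's observation that the detecting polynomial can be taken short enough (via a clever iterative choice of $X$ depending on $\alpha$) so that the $N$-term in the large sieve does not dominate in the range $\alpha\ge 4/5$; here one uses that $1-\alpha$ is small, so the Dirichlet polynomial can be taken of length essentially $Q^{3(1-\alpha)}$, and the resulting bound on the number of zeros becomes $Q^{(6+\eps)(1-\alpha)}$ after absorbing the $Q^{o(1)}$ losses from spacing, divisor bounds on the coefficients of $M_X L$, and the dyadic decompositions. I would expect no novelty is required beyond Jutila's original argument, so ultimately I would simply cite \cite{Jut77}, checking only that the uniformity in $\alpha$ and the fact that zeros are counted with multiplicity match the statement required for the downstream application.
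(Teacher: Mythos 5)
Your proposal reaches the same endpoint as the paper: the paper does not prove this proposition but simply cites it as Jutila's grand zero-density estimate \cite{Jut77} (noting a log-free version is not needed), which is exactly what you conclude by saying you ``would simply cite \cite{Jut77}.'' Your sketch of the Halász--Montgomery detection plus hybrid large sieve machinery behind Jutila's theorem is a fair description of the underlying argument, but the paper treats this as a black box, so no further verification is required beyond matching the statement.
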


We also require the following bounds for various terms in the expansion of $\Lambda_{Q,\sigma_0}$; these appear as \cite[Corollary~6.4]{Gre22} (after crudely bounding the series given via \cref{prop:zero-dens}). 
\begin{lemma}\label{lem:upper}
Fix $Q\le N$, $\eps>0$, and $\sigma_0\le 1/9$. We have that $|F_{\chi,Q}(n)|\ll \tau(n) (\log Q)^3$ and $|\Lambda_{Q,\sigma_0}(n)|\ll_\eps \tau(n)(\log Q)^{4}\max((Q^{(6+\eps)}/n)^{\sigma_0},1).$
\end{lemma}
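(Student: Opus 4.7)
The plan is to prove the first bound by direct Gauss--sum evaluation of the inner character sum defining $F_{\chi,Q}(n)$, and then bootstrap from the pointwise bound together with the zero--density estimate (\cref{prop:zero-dens}) to obtain the second bound on $\Lambda_{Q,\sigma_0}(n)$. This entire computation essentially appears as \cite[Corollary~6.4]{Gre22}, which one may invoke directly, but it is instructive to sketch both steps.

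For $F_{\chi,Q}(n)$, I would fix primitive $\chi\pmod q$ and write $r = qs$ in \cref{eq:ApproximantError}. The factor $\ol{\chi(r/q)} = \ol{\chi(s)}$ vanishes unless $(s,q) = 1$, and $\mu(s)\ne 0$ further forces $s$ squarefree. For such $(s,q)$, the Chinese remainder theorem factors the inner character sum as
\[\sum_{b\in(\mbf Z/qs\mbf Z)^\ast}\chi(b) e\paren{\frac{bn}{qs}} = \paren{\sum_{b_1\in (\mbf Z/q\mbf Z)^\ast}\chi(b_1) e\paren{\frac{b_1 \bar s n}{q}}}\cdot c_s(\bar q n),\]
where $\bar s, \bar q$ denote multiplicative inverses $\pmod q$ and $\pmod s$ respectively. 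Primitivity of $\chi$ evaluates the first factor as $\ol\chi(\bar s n)\tau(\chi)$ when $(n,q) = 1$ and as $0$ otherwise, while $|c_s(\bar q n)|\le (s,n)$ since $(\bar q,s)=1$. Combining with $|\tau(\chi)|^2 \le q$ and $\phi(qs) = \phi(q)\phi(s)$ yields
\[|F_{\chi,Q}(n)| \le \frac{q}{\phi(q)}\sum_{\substack{s\le Q,~s\text{ squarefree}\\(s,q) = 1}}\frac{(s,n)}{\phi(s)}\ll \tau(n)(\log Q)^{3},\]
where the final estimate follows from $q/\phi(q)\ll \log\log Q$, $1/\phi(s)\ll (\log\log Q)/s$, and $\sum_{s\le Q}(s,n)/s \ll \tau(n)\log Q$ after stratifying by $d = (s,n)$. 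In particular, since $\Lambda_Q = F_{\mathbbm{1},Q}$ (the case $q=1$), the same bound holds for $\Lambda_Q$.

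For the second bound, I would write
\[|\Lambda_{Q,\sigma_0}(n)| \le |\Lambda_Q(n)| + \sum_{\rho\in \Xi_Q(\sigma_0)} n^{-\sigma_\rho} |F_{\chi_\rho,Q}(n)|\ll \tau(n)(\log Q)^3\paren{1 + \sum_{\rho\in \Xi_Q(\sigma_0)} n^{-\sigma_\rho}},\]
with $\sigma_\rho := 1 - \Re \rho$. Letting $N(\sigma) := \#\{\rho\in \Xi_Q(\sigma_0): \sigma_\rho \le \sigma\}\ll_\eps Q^{(6+\eps)\sigma}$ by \cref{prop:zero-dens}, Abel summation gives
\[\sum_{\rho} n^{-\sigma_\rho}\le N(\sigma_0) n^{-\sigma_0} + (\log n) \int_0^{\sigma_0} N(\sigma) n^{-\sigma}\,d\sigma\ll_\eps (\log Q)\cdot \max\paren{\paren{Q^{6+\eps}/n}^{\sigma_0},\,1},\]
where the integral is controlled by casing on the sign of $(6+\eps)\log Q - \log n$ (absorbing $\log n \ll \log Q$ in the relevant regime since $Q \le N$ and $n\le M$). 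Multiplying through completes the proof.

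The principal (mild) technical point is the primitivity--driven vanishing of the inner Gauss sum when $(n,q)>1$, which prevents any worst--case accumulation over $n$; everything else is mechanical divisor--bound bookkeeping together with the zero--density input. In a fully polished version, one simply cites \cite[Corollary~6.4]{Gre22} for the first bound and adds the Abel summation step for the second.
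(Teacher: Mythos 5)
Your proof is correct and takes essentially the same route as the paper: the paper dispatches this lemma by citing \cite[Corollary~6.4]{Gre22} for the $F_{\chi,Q}$ bound and then ``crudely bounding the series'' via \cref{prop:zero-dens}, which is exactly what you reprove and carry out explicitly. The Gauss--sum calculation for the first bound is right (including the key observation that primitivity forces the inner Gauss sum to vanish when $(n,q)>1$, so no worst-case accumulation over $n$ occurs), and the Abel summation against $N(\sigma)\ll_\eps Q^{(6+\eps)\sigma}$ is the intended way to get the second bound.

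One small caveat on the Abel-summation step: the parenthetical justification ``absorbing $\log n\ll\log Q$ \ldots since $Q\le N$ and $n\le M$'' is not the right reason, since $Q\le N$ alone does not bound $\log n$ by $O(\log Q)$. What actually makes the argument close is the casework itself. When $n\le Q^{6+\eps}$ one has $\log n\le(6+\eps)\log Q$ directly, and the integral is at most $\sigma_0(Q^{6+\eps}/n)^{\sigma_0}$, giving $(\log n)\cdot\sigma_0(Q^{6+\eps}/n)^{\sigma_0}\ll_\eps(\log Q)(Q^{6+\eps}/n)^{\sigma_0}$. When $n>Q^{6+\eps}$, write $c=\log n-(6+\eps)\log Q>0$; the integral is $\le\min(\sigma_0,1/c)$. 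If $c\ge\tfrac12\log n$ then $(\log n)/c\le 2$; otherwise $\log n<2(6+\eps)\log Q\ll_\eps\log Q$ and $(\log n)\sigma_0\ll_\eps\log Q$ again. Either way the contribution is $\ll_\eps\log Q\cdot\max\big((Q^{6+\eps}/n)^{\sigma_0},1\big)$, and multiplying by $\tau(n)(\log Q)^3$ gives the stated bound. With that sub-case made explicit, the argument is complete.
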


We will finally require the following consequence of Siegel's theorem \cite{Sie35} and the Vinogradov--Korobov \cite{Vin58, Kor58} zero free region for Dirichlet $L$-functions. 

\begin{lemma}\label{lem:large-conduc}
Let $Q\le N$, $\sigma_0\le 1/2$, and $N$ is larger than an absolute constant. Then for $A\ge 1$ the following holds. Suppose that $\rho = \sigma + i\gamma \in \Xi_{Q}(\sigma_0)$ and $1 - \sigma \le (\log\log N)^2/\log N$. Then the modulus $q_{\rho}$ of the associated primitive Dirichlet character $\chi_\rho$ is $\gg_{A} (\log N)^{A}$. Furthermore there is at most $1$ primitive Dirichlet character $\chi_\rho$ corresponding to a zero in $\Xi_{A}$ such that the modulus is less than $\exp((\log N)/(\log\log N)^3)$.
\end{lemma}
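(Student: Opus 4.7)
My plan is to dichotomize according to whether $\rho$ is a Siegel-type exceptional zero of a real primitive character or is covered by the Vinogradov--Korobov zero-free region for Dirichlet $L$-functions. In the non-exceptional case, the Vinogradov--Korobov region \cite{Vin58,Kor58} asserts that, for some absolute $c>0$, every zero $\rho = \sigma + i\gamma$ of $L(s,\chi)$ with $\chi$ primitive of modulus $q$ (excepting at most one real Siegel zero of a real primitive character) satisfies
\[
1 - \sigma \;\ge\; \frac{c}{(\log q(|\gamma|+2))^{2/3}(\log\log q(|\gamma|+3))^{1/3}}.
\]
Since $q,|\gamma|\le Q\le N$ by the definition of $\Xi_Q(\sigma_0)$, the right-hand side is at least $c'/((\log N)^{2/3}(\log\log N)^{1/3})$, which exceeds the hypothesized upper bound $(\log\log N)^2/\log N$ whenever $\log N > C(\log\log N)^7$. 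Hence for $N$ large no non-exceptional zero $\rho\in\Xi_Q(\sigma_0)$ can satisfy $1-\sigma \le (\log\log N)^2/\log N$, and both assertions hold vacuously in this case. Note that the classical de la Vallée Poussin region $1-\sigma \ge c/\log(q(|\gamma|+2))\ge c'/\log N$ would be \emph{weaker} than our upper bound, so the Vinogradov--Korobov improvement is genuinely essential here.

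In the remaining exceptional case, $\chi_\rho$ is real primitive and $\rho=\sigma$ is a real Siegel zero. Here I would invoke Siegel's ineffective theorem \cite{Sie35}: for each $\eps>0$ there exists $c(\eps)>0$ with $L(s,\chi)\ne 0$ on $(1-c(\eps)q^{-\eps},1)$ for every real primitive $\chi$ modulo $q$. Taking $\eps = 1/(2A)$ and inverting $1-\sigma \ge c(\eps)q_\rho^{-\eps}$ gives
\[
q_\rho \;\ge\; \bigl(c(\eps)/(1-\sigma)\bigr)^{2A} \;\gg_A\; \bigl(\log N/(\log\log N)^2\bigr)^{2A} \;\gg_A\; (\log N)^A,
\]
which establishes the first conclusion.

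For the uniqueness claim, the non-exceptional analysis already rules out \emph{any} character whose associated zero lies outside the Siegel exception, so only real primitive characters carrying Siegel zeros can occur. I would then invoke Landau's classical repulsion theorem for Siegel zeros: among real primitive characters of modulus $\le M$, at most one has a real zero $\beta > 1 - c/\log M$ for some absolute $c>0$. Setting $M = \exp(\log N/(\log\log N)^3)$ turns the threshold into $1-\beta \le c(\log\log N)^3/\log N$, which subsumes our hypothesis $1-\sigma\le(\log\log N)^2/\log N$ once $\log\log N > 1/c$, yielding the desired uniqueness. The main obstacle is the inherent ineffectivity of Siegel's theorem; this is consistent with the $\gg_A$ notation in the statement but implies that this lemma cannot be combined with \cref{prop:zero-dens} to produce effective bounds.
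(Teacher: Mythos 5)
There is a genuine gap in your non-exceptional case. The zero-free region you cite,
\[
1 - \sigma \;\ge\; \frac{c}{(\log q(|\gamma|+2))^{2/3}(\log\log q(|\gamma|+3))^{1/3}},
\]
is not a known result: the Vinogradov--Korobov method improves the classical de la Vall\'ee Poussin region only in the $t$-aspect (large imaginary part), not in the $q$-aspect. For a primitive character of large conductor $q$ and \emph{bounded} $|\gamma|$, the best unconditional bound remains the classical $1-\sigma \gg 1/\log q$ (up to one Siegel exception), and for this reason you yourself observe that the classical bound $c'/\log N$ is \emph{weaker} than the hypothesized threshold $(\log\log N)^2/\log N$. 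Consequently, the non-exceptional case is not vacuous: zeros with small bounded $|\gamma|$ and large conductor $q$ can perfectly well satisfy $1-\sigma \le (\log\log N)^2/\log N$, and your argument provides nothing for them. In fact, were your claimed region true, applying it with $\gamma=0$ would give $1-\beta \gg (\log q)^{-2/3-o(1)}$ for all but one real zero of all but one real primitive character, which is dramatically stronger than Siegel's theorem and is a famous open problem.

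The correct dichotomy (and the one used in the paper) is by the size of $|\gamma|$ rather than by exceptional/non-exceptional. For $|\gamma|$ bounded above, one needs the Landau--Page theorem (uniformity of the classical region over all characters with conductor below a given threshold, allowing at most one real exception), from which $1-\sigma \gg 1/\log q_\rho$ modulo one exception; combining with $1-\sigma \le (\log\log N)^2/\log N$ forces $q_\rho \ge \exp(\Omega(\log N/(\log\log N)^2))$, and the exceptional character is handled by Siegel. Vinogradov--Korobov is only invoked for $|\gamma|$ large, where it rules out the offending zeros via the $t$-aspect. Your exceptional-case treatment via Siegel, and the Landau repulsion argument for uniqueness, are on the right track, but they must be coupled with a Landau--Page argument for the non-exceptional small-$|\gamma|$ zeros; the proof cannot dispose of those by an appeal to a $q$-aspect Vinogradov--Korobov region that does not exist.
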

\begin{proof}
We divide the proof into two cases, based on whether $|\gamma|$ is larger than or equal to, or less than or equal to $5$. If $|\gamma|\ge 5$, we have via the Vinogradov--Korobov zero free region for Dirichlet $L$-functions that the associated conductor is in fact larger than $\exp(\Omega(\log N/(\log\log N)^2))$ (see e.g., \cite{Kha24}). If $|\gamma|\le 5$, the desired bound follows from the Landau-Page theorem (e.g., \cite[Theorem~53,~Exercise~54]{Tao14}) applied over all Dirichlet character with conductor bounded by 
$$\exp(O(\log N/(\log\log N)^2))$$
modulo possibly the exception of a single real zero. The final real zero is then handled via Siegel's theorem (see \cite[Theorem~5.28]{IK04}).
\end{proof}

\section{Proof of \cref{thm:main} given \cref{lem:correction,lem:main-term}}

We are now in position to prove \cref{thm:main} modulo the crucial technical results of the paper. We first note that it suffices to consider the problem when $\Lambda$ is replaced by $\Lambda_{Q,\sigma_0}$. This is a completely standard consequence of \cref{lem:expan}.
\begin{lemma}\label{lem:3-AP}
Let $\sigma_0\le 1/9$ and $Q\le M^{\sigma_0}$. There exists a constant $C_{\ref{lem:3-AP}}\ge 1$ such that
\[\bigg|\sum_{x_1 + x_2 + x_3 = N}\bigg(\prod_{i=1}^{3}\Lambda(x_i)\mbm{1}_{\mc{S}_{b,M}}(x_i) -\prod_{i=1}^{3}\Lambda_{Q,\sigma_0}(x_i)\mbm{1}_{\mc{S}_{b,M}}(x_i)\bigg)\bigg|\le M^{2+o(1)}(C\log g)^{k}Q^{-1/6}.\]
\end{lemma}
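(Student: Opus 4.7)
The plan is a telescoping--Fourier argument. Set $a(n) := \Lambda(n)\mbm{1}_{\mc{S}_{b,M}}(n)$ and $b(n) := \Lambda_{Q,\sigma_0}(n)\mbm{1}_{\mc{S}_{b,M}}(n)$, and decompose
\[\prod_{i=1}^{3} a(x_i) - \prod_{i=1}^{3} b(x_i) = \sum_{j=1}^{3} \Big(\prod_{i<j} b(x_i)\Big)\,(a-b)(x_j)\,\Big(\prod_{i>j} a(x_i)\Big).\]
For each $j$, summing over $x_1+x_2+x_3 = N$ and applying Fourier inversion converts the convolution into an integral $\int_{\mbf{R}/\mbf{Z}} \wh{f_1}(-\Theta)\wh{f_2}(-\Theta)\wh{f_3}(-\Theta) e(-N\Theta)\,d\Theta$, where $f_j = a-b$ and the other two $f_i \in \{a,b\}$.

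Next, I would bound this integral by $\|\wh{a-b}\|_\infty \cdot \|\wh{f_{i_1}}\|_2 \cdot \|\wh{f_{i_2}}\|_2$, applying Cauchy--Schwarz to the two non-difference factors. By \cref{lem:expan}, uniformly in $\Theta$, $\|\wh{a-b}\|_\infty \ll M^{1+o(1)}Q^{-1/6}(C\log g)^{k}$, which supplies all the saving. For the $L^2$ norms, Parseval gives $\|\wh{a}\|_2^2 \le \sum_{n \le M}\Lambda(n)^2 \ll M\log M$. For $\|\wh{b}\|_2^2$ I would invoke \cref{lem:upper} and split the sum at $n \sim Q^{6+\eps}$: the contribution from $n \ge Q^{6+\eps}$ is $\ll (\log Q)^{O(1)}\sum_{n\le M}\tau(n)^2 \ll M(\log M)^{O(1)}$, while the range $n < Q^{6+\eps}$ contributes only $O(M^{\sigma_0(6+\eps)+o(1)}) = O(M^{2/3+o(1)})$ since $\sigma_0 \le 1/9$ and $Q \le M^{\sigma_0}$.

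Combining these inputs gives, for each of the three telescoped terms,
\[M^{1+o(1)} Q^{-1/6}(C\log g)^{k} \cdot M^{1/2}(\log M)^{O(1)} \cdot M^{1/2}(\log M)^{O(1)} = M^{2+o(1)} Q^{-1/6}(C\log g)^{k},\]
after absorbing the logarithmic factors into $M^{o(1)}$, and summing the three $j$ yields the claimed bound.

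There is no serious obstacle here: the real work has been done in \cref{lem:expan}, which uses the $L^1$ bound of \cref{lem:L1} to absorb the indicator $\mbm{1}_{\mc{S}_{b,M}}$ into the uniform Fourier estimate. The only step requiring any care is the $L^2$ control on $\Lambda_{Q,\sigma_0}\mbm{1}_{\mc{S}_{b,M}}$, which is a direct divisor-sum computation from \cref{lem:upper}.
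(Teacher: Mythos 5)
Your proposal is correct and matches the paper's own argument: both telescope the difference of triple products, pass to the Fourier side, apply the sup-norm bound on $\wh{F}-\wh{G}$ from \cref{lem:expan}, and control the remaining two factors in $L^2$ via Parseval (using $\Lambda(n)\le\log n$ and \cref{lem:upper} for the $\Lambda_{Q,\sigma_0}$ piece). The only cosmetic difference is that the paper uses the inequality $|z_1z_2|\le\frac12(|z_1|^2+|z_2|^2)$ in place of your explicit Cauchy--Schwarz, which is the same estimate.
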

\begin{proof}
Let $F(x_i) = \Lambda(x_i)\mbm{1}_{\mc{S}_{b,M}}(x_i)$ and $G(x_i) = \Lambda_{Q,\sigma_0}(x_i)\mbm{1}_{\mc{S}_{b,M}}(x_i)$. For functions $H_i:\mbf{Z}\to \mbf{C}$, we have that 
\[\sum_{x_1 + x_2 + x_3= N}\prod_{i=1}^{3}H_i(x_i) = \int_{\mbf{R}/\mbf{Z}}\prod_{i=1}^{3}\wh{H_i(\Theta)}e(-N\Theta)~d\Theta.\] Via telescoping, we have that 
\begin{align*}
&\bigg|\sum_{x_1 + x_2 + x_3 = N}\bigg(\prod_{i=1}^{3}\Lambda(x_i)\mbm{1}_{\mc{S}_{b,M}}(x_i) -\prod_{i=1}^{3}\Lambda_{Q,\sigma_0}(x_i)\mbm{1}_{\mc{S}_{b,M}}(x_i)\bigg)\bigg|\\
&=\bigg|\sum_{x_1 + x_2 + x_3 = N}(F-G)(x_1)F(x_2)F(x_3) +G(x_1)(F-G)(x_2)F(x_3) + G(x_1)G(x_2)(F-G)(x_3)\bigg|
\\
&\qquad\le\frac{3}{2}\cdot \int_{\mbf{R}/\mbf{Z}}|\wh{F}(\Theta-\wh{G}(\Theta)|\cdot (|\wh{F}(\Theta)|^2+|\wh{G}(\Theta)|^2)~d\Theta \\
&\qquad\le \sup_{\Theta\in \mbf{R}/\mbf{Z}}|\wh{F}(\Theta)-\wh{G}(\Theta)| \cdot \frac{3}{2}\cdot \int_{\mbf{R}/\mbf{Z}}|\wh{F}(\Theta)|^2+|\wh{G}(\Theta)|^2~d\Theta \ll (M^{1+o(1)}Q^{-1/6}(C\log g)^k) \cdot M^{1+o(1)}
\end{align*}
where in the final line we have applied \cref{lem:expan} and Parseval. (The $L^2$-bound for $F$ is immediate as $\Lambda(n)\le \log n$ pointwise and $G$ follows via \cref{lem:upper}.)
\end{proof}

We now remove the contribution from all terms which correspond to zeros with sufficiently small real part. For the remainder of the proof, set
\[\sigma_0 = 10^{-3}\text{ and }Q = M^{\sigma_0}.\]
We abbreviate 
\[\Xi := \Xi_{Q}(\sigma_0)\]
and define 
\[\Xi_{A}:= \bigg\{\rho = \sigma + it: \rho\in \Xi,~1-\sigma \le\frac{A\log\log M}{\log M}\bigg\}.\]
As in $\Xi$, zeros in $\Xi_{A}$ are treated with multiplicity. We furthermore define 
\[\Lambda_{A,\on{Approx}}(x) = \Lambda_{Q}(x) - \sum_{\rho\in \Xi_{A}}x^{\rho-1}F_{\chi_{\rho},Q}(x).\]
The following states that it suffices to consider $\Lambda_{A,\on{Approx}}$; this is a consequence of \cref{lem:divisor-function} and \cref{prop:zero-dens}.
\begin{lemma}\label{lem:approx-remov}
There exists a constant $C = C_{\ref{lem:approx-remov}}\ge 1$ such that if $g\ge C$ and $A\ge 1$, then 
\[\bigg|\sum_{\substack{x_1 + x_2 + x_3 = N\\x_i\in \mc{S}_{b}}}\bigg(\prod_{i=1}^{3}\Lambda_{Q,\sigma_0}(x_i) - \prod_{i=1}^{3}\Lambda_{A,\on{Approx}}(x_i)\bigg)\bigg|\ll_{A}(\log M)^{-A/18 + O(1)} \cdot  \sum_{\substack{x_1 + x_2 + x_3 = N\\x_i\in \mc{S}_{b}}} 1.\]
\end{lemma}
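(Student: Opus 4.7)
The plan is to telescope the difference and estimate each resulting summand via the pointwise bounds in \cref{lem:upper}, the zero-density estimate \cref{prop:zero-dens}, and the divisor bound \cref{lem:divisor-function}. Define
\[\Delta(x) := \Lambda_{Q,\sigma_0}(x) - \Lambda_{A,\on{Approx}}(x) = -\sum_{\rho\in\Xi\setminus\Xi_A} x^{\rho-1} F_{\chi_\rho,Q}(x).\]
Telescoping produces three summands, each (up to permutation and the choice of approximant appearing in the two undifferentiated factors) of the form $\Lambda_{Q,\sigma_0}(x_1)\,\Lambda_{A,\on{Approx}}(x_2)\,\Delta(x_3)$; by symmetry it suffices to handle one such summand.

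Fix a threshold $\eta$ slightly above $1/18+\sigma_0(6+\eps)$ and split the sum according to whether $\min_i x_i \ge M^\eta$. In the ``all large'' case, \cref{lem:upper} gives $|\Lambda_{Q,\sigma_0}(x_j)|,|\Lambda_{A,\on{Approx}}(x_j)|\ll \tau(x_j)(\log M)^{O(1)}$ (since $x_j\ge M^\eta\ge Q^{6+\eps}$ makes the $\max$ reduce to $1$) and $|F_{\chi_\rho,Q}(x_3)|\ll \tau(x_3)(\log M)^{O(1)}$, so
\[|\Delta(x_3)|\ll \tau(x_3)(\log M)^{O(1)}\sum_{\rho\in\Xi\setminus\Xi_A}x_3^{-(1-\Re\rho)}.\]
Partitioning the range $1-\Re\rho\in[A\log\log M/\log M,\sigma_0]$ into slices of width $1/\log M$ and applying \cref{prop:zero-dens} to bound the number of zeros with $1-\Re\rho\le s$ by $Q^{(6+\eps)s}$, the contribution of the slice at level $s$ is $\ll (x_3/Q^{6+\eps})^{-s}(\log M)^{O(1)}$. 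Since $x_3\ge M^\eta>Q^{6+\eps}$, this is decreasing in $s$ and so maximized at $s=A\log\log M/\log M$, yielding
\[\sum_{\rho\in\Xi\setminus\Xi_A}x_3^{-(1-\Re\rho)}\ll_\eps (\log M)^{-A(\eta-\sigma_0(6+\eps))+O(1)}\ll (\log M)^{-A/18+O(1)}.\]
The divisor factors $\tau(x_1)\tau(x_2)\tau(x_3)$ are absorbed via $\tau(x_1)\tau(x_2)\tau(x_3)\le \tau(x_1)^3+\tau(x_2)^3+\tau(x_3)^3$ and three applications of \cref{lem:divisor-function}; this bounds the ``all large'' contribution by $(\log M)^{-A/18+O(1)}\sum 1$.

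The main obstacle is the ``small'' case $\min_i x_i<M^\eta$. Here the pointwise bounds in \cref{lem:upper} acquire $(Q^{6+\eps}/x_j)^{\sigma_0}$ factors and the zero sum acquires a $(M/x_3)^{\sigma_0}$ factor, for a total polynomial loss of $M^{O(\sigma_0)}$ per triple. However, the number of triples with $\min_i x_i\le M^\eta$ is at most $3M^{\eta+1}$ (at most $M^\eta$ choices for the small variable and $M$ for one of the others), while by \cref{lem:sol-bound} (together with the observation that $\#\mc{S}_b\cap[1,M]\gg M^{1-o_g(1)}$) the total number of solutions is $\gg_g M^{2-o_g(1)}$. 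The ratio $M^{\eta-1+o_g(1)}$ is polynomially small in $M$ and easily absorbs the $M^{O(\sigma_0)}$ loss, since $\sigma_0=10^{-3}$ and $\eta<1$; hence this case contributes negligibly compared to $(\log M)^{-A/18}\sum 1$. Combining the two cases gives the claimed estimate.
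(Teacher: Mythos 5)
Your proof is correct and takes essentially the same approach as the paper: telescope the difference, use \cref{lem:upper} and \cref{prop:zero-dens} for pointwise control of $\Delta$, split at a threshold $M^\eta$, absorb divisor factors via \cref{lem:divisor-function}, and bound the small-$x_i$ range trivially against \cref{lem:sol-bound}. The only cosmetic difference is your choice of $\eta$ (slightly above $1/18+\sigma_0(6+\eps)$) versus the paper's more generous $\eta = 17/18$, and your more explicit dyadic partition in $1-\Re\rho$; both give the stated estimate.
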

\begin{proof}
Via \cref{prop:zero-dens} and \cref{lem:upper}, we see that we have the bound 
\[\big|\Lambda_{Q,\sigma_0}(x) - \Lambda_{A,\on{Approx}}(x)\big|\ll \tau(x) \cdot (\log M)^{3}\cdot \max\bigg((Q^8/x)^{\sigma_0},(Q^8/x)^{\frac{A\log\log M}{\log M}}\bigg).\]
We therefore have that 
\begin{align*}
\bigg|\sum_{\substack{x_1 + x_2 + x_3 = N\\x_i\in \mc{S}_{b}}}\bigg(\prod_{i=1}^{3}&\Lambda_{Q,\sigma_0}(x_i) - \prod_{i=1}^{3}\Lambda_{A, \mathrm{Approximate}}(x_i)\bigg)\bigg|\\
&\ll M^{1 + 17/18 + o(1)} + \sum_{\substack{x_1 + x_2 + x_3 = N\\\min(x_i)\ge M^{17/18}\\x_i\in \mc{S}_{b}}}(\log M)^{-A/18}\prod_{i=1}^{3}\tau(x_i)\\
&\ll M^{35/18 + o(1)} + (\log M)^{-A/18} \sum_{\substack{x_1 + x_2 + x_3 = N\\x_i\in \mc{S}_{b}}}\prod_{i=1}^{3}\tau(x_i)^{3}\\
&\ll_{A} (\log M)^{-A/18 + O(1)} \sum_{\substack{x_1 + x_2 + x_3 = N\\x_i\in \mc{S}_{b}}} 1;
\end{align*}
here we have used \cref{lem:sol-bound} to remove the first term.
\end{proof}

The remainder of the analysis will be devoted to computing various terms when expanding into $\Lambda_{A}(x_1)$. These will be proven in the next section and are the technical heart of the remainder of the paper. The most delicate part of the analysis is ruling out various terms arising from various ``correcting'' zeros. 
\begin{lemma}\label{lem:correction}
Fix $A,B\ge 1$. We have that if $\chi_1$ and $\chi_2$ are not both trivial and the corresponding zeros lie in $\Xi_{A}$, then
\[\sum_{\substack{x_3\in \mc{S}_b}}\tau(x_3)\bigg|\sum_{\substack{x_1 + x_2 = N-x_3\\x_i\in \mc{S}_b}}\prod_{j=1}^{2}x_{j}^{\rho_j - 1}F_{\chi_j,Q}(x_j)\bigg|\ll_{A,B}(\log M)^{-B}\sum_{\substack{x_1 + x_2 +x_3= N\\x_i\in \mc{S}_b}}1.\]
\end{lemma}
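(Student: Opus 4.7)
The plan is to bound, pointwise in $x_3$, the inner bilinear sum
\[ I(T) := \sum_{\substack{x_1+x_2=T\\ x_i\in\mc{S}_b}} \prod_{j=1}^{2} x_j^{\rho_j-1}\,F_{\chi_j,Q}(x_j),\qquad T := N-x_3, \]
by a power of $M$, and then insert the divisor weight $\tau(x_3)\le M^{o(1)}$ and apply \cref{lem:divisor-function}. Since any $M^{-\Omega(1)}$ bound beats $(\log M)^{-B}$, pointwise power savings suffice.

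\emph{Step~1: Extract the character from $F_{\chi_j,Q}$.} Observe that $c_{\chi_j}(b,r)$ vanishes unless $r/q_j$ is squarefree and coprime to $q_j$. A standard CRT and Gauss-sum calculation then yields, for $\gcd(x,q_j)=1$,
\[ F_{\chi_j,Q}(x) = \overline{\chi_j(x)}\cdot H_{\chi_j,Q}(x),\qquad |H_{\chi_j,Q}(x)|\ll (\log Q)^{O(1)}. \]
The contribution of $x_j$ sharing common factors with $q_j$ is absorbed using the trivial bound $|F_{\chi_j,Q}|\ll\tau(x_j)(\log Q)^3$ from \cref{lem:upper} together with \cref{lem:divisor-function}, gaining a power of $q_j$ from the rarity of such $x_j$ (by \cref{lem:chernoff}).

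\emph{Step~2: Reveal digits.} As in the proof of \cref{lem:divisor-function}, apply \cref{lem:chernoff} and \cref{lem:chernoff-hoeff} to decompose the conditional distribution of $x_1\mid(x_1+x_2=T,\,x_i\in\mc{S}_b)$ into a weighted superposition of product measures $\mu_{\vec{\mc{B}}}$. Discarding an exceptional event of probability $T^{-\Omega(1)}$, the digit sets $\mc{B}_j$ are intervals missing at most two points, with $|\mc{B}_j|\ge g^{4/5}$ on at least $7k/16$ of the coordinates $j<k/2$.

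\emph{Step~3: Character Fourier expansion and large sieve.} Expand each character via $\chi_j(y)=\sum_{a\in(\mbf{Z}/q_j\mbf{Z})^{\ast}}\widehat{\chi_j}(a)\,e(ay/q_j)$ with $|\widehat{\chi_j}(a)|\le q_j^{-1/2}$ for primitive $\chi_j$. The sum against $\mu_{\vec{\mc{B}}}$ becomes
\[ \sum_{a_1\in(\mbf{Z}/q_1\mbf{Z})^{\ast},\,a_2\in(\mbf{Z}/q_2\mbf{Z})^{\ast}}\widehat{\chi_1}(a_1)\widehat{\chi_2}(a_2)\,e(a_2T/q_2)\cdot S_{\vec{\mc{B}},T}(a_1/q_1-a_2/q_2), \]
where $S_{\vec{\mc{B}},T}(\beta):=\sum_{x_1}\mu_{\vec{\mc{B}}}(x_1)\,x_1^{\rho_1-1}(T-x_1)^{\rho_2-1}e(\beta x_1)$. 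A dyadic-in-$x_1$ decomposition and partial summation strip the smooth weight $x_1^{\rho_1-1}(T-x_1)^{\rho_2-1}$ at a cost of only $M^{O(\sigma_0)}$ (since $|\mathrm{Im}\,\rho_j|\le Q = M^{\sigma_0}$), reducing $S_{\vec{\mc{B}},T}(\beta)$ to a weighted $\wh{\mu_{\vec{\mc{B}}}}(\beta)$. Applying the large sieve \cref{lem:large-sieve} (with $d=\mathrm{lcm}(q_1,q_2)$) to the sum over $a_1,a_2$ produces a bound of the form $|\mathrm{supp}(\mu_{\vec{\mc{B}}})|\cdot g^{-\Omega(k)}$, provided the window $t\sim\log(q_1q_2)/\log g$ of dense digits is chosen appropriately. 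Because $\chi_1,\chi_2$ are not both trivial, \cref{lem:large-conduc} yields $\max(q_1,q_2)\gg_B(\log M)^B$ apart from a single Siegel-type character, which we handle separately using Siegel's lower bound $\sigma_j\gg_\epsilon q_j^{-\epsilon}$ on the Siegel zero and the resulting $T^{-\sigma_j}$ decay in the weight $x_j^{\rho_j-1}$.

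The main obstacle is ensuring the large-sieve bound of \cref{lem:large-sieve} beats the combined losses: the $M^{O(\sigma_0)}$ loss from partial summation, the $(\log M)^{O(1)}$ loss from simplifying $F_{\chi_j,Q}$, and the loss from the divisor weight $\tau(x_3)$. Because the large-sieve gain is a genuine power of $M$ whereas the losses are at most $M^{O(\sigma_0)}$, this can be arranged by taking $\sigma_0$ small and balancing the two terms $Q^2/d$ and $g^t$ in \cref{lem:large-sieve}. The Siegel-exceptional case is the secondary technical hurdle; it requires a separate argument since the large-conductor gain is then unavailable.
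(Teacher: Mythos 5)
Your plan has a fundamental gap at Step 3 that cannot be repaired within the framework you describe. After the Gauss-sum expansion $\chi_j(y) = \sum_{a_j}\widehat{\chi_j}(a_j)e(a_jy/q_j)$ you obtain
\[
I(T)\;=\;\frac{1}{\tau(\chi_1)\tau(\chi_2)}\sum_{a_1,a_2}\chi_1(a_1)\chi_2(a_2)\,e\!\left(\tfrac{a_2 T}{q_2}\right)\sum_{x_1}\mu_{\vec{\mc{B}}}(x_1)W(x_1)\,e\!\left(x_1\bigl(\tfrac{a_1}{q_1}-\tfrac{a_2}{q_2}\bigr)\right),
\]
and you propose to take absolute values and apply the large sieve of \cref{lem:large-sieve}. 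This discards the phase $e(a_2T/q_2)$, and then the ``diagonal'' $a_1/q_1=a_2/q_2$ ruins everything: by \cref{lem:large-conduc} it can happen that $q_1=q_2=q$ with $q$ only polynomially large in $\log N$ (exactly one such exceptional modulus is permitted), in which case the $\phi(q)$ pairs with $a_1=a_2$ contribute $(q_1q_2)^{-1/2}\phi(q)|\wh{\mu_{\vec{\mc{B}}}}(0)|=(1-o(1))\prod_j|\mc{B}_j|$ — no saving at all. The large sieve cannot save a power here, since it only gains in the denominator of the frequency, which is $1$ on the diagonal. The only source of cancellation in this degenerate regime is the phase $e(a_2T/q_2)$ you threw away. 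The paper's proof keeps it: after fixing $a_1-a_2=h$ the inner sum over $a_2$ becomes a mixed character/exponential sum of exactly the shape treated in \cref{lem:char-sum}, and the Weil bound (together with Hensel lifting at prime powers) is what produces the power saving in $q$. Your proposal contains no Weil-type ingredient, and one is unavoidable. The Siegel-zero fallback you mention does not rescue this: Siegel's theorem controls only the single real exceptional zero, while $\Xi_A$ contains complex zeros with $1-\sigma\le A\log\log M/\log M$ whose conductors may all be as small as $(\log N)^A$; for those, $|x_j^{\rho_j-1}|\approx(\log M)^{-A}$, which does not beat $(\log M)^{-B}$ when $B\gg A$ (precisely the regime the main theorem needs).

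Closely related is the framing of the bound as ``pointwise in $x_3$, by a power of $M$'': this is false as stated. When $T=N-x_3$ is highly divisible by $q_2$ (e.g., $q_2\mid T$), the phase $e(a_2T/q_2)$ is identically $1$ and the character sum of \cref{lem:char-sum} has no cancellation. The lemma is genuinely an averaged statement: the paper uses \cref{lem:divis} and \cref{lem:divis-small} to show the bad set of $x_3$ (those for which $T$ has a large common factor with $q_2$, or $\tau(T)$ is abnormally large, etc.) has negligible measure, and \cref{lem:key-computation} is formulated with explicit divisibility hypotheses on $T$ that hold only for typical $x_3$. A smaller but real error is the claim $|H_{\chi_j,Q}(x)|\ll(\log Q)^{O(1)}$ in Step 1: unwinding $F_{\chi_j,Q}$ gives $F_{\chi_j,Q}(n)=\overline{\chi_j}(n)\cdot\tfrac{q_j}{\phi(q_j)}\sum_{s\le Q,(s,q_j)=1}\tfrac{\mu(s)}{\phi(s)}c_s(n)$, and the Ramanujan-sum piece is only $\ll\tau(n)(\log Q)^{O(1)}$ (consistent with \cref{lem:upper}); moreover it carries its own Fourier structure in moduli $s\le Q$ which must be combined with the Gauss-sum moduli $q_j$, as the paper does via the full expansion and \cref{lem:frac}. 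So Steps 1 and 3 both need to be replaced by something close to the paper's \cref{lem:key-computation}.
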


We also now compute the main term; this computation is nontrivial as the main term oscillates as $N$ varies. 
\begin{lemma}\label{lem:main-term}
Fix $A\ge 1$. We have that 
\[\sum_{\substack{x_1+x_2+x_3 = N\\x_i\in \mc{S}_b}}\prod_{i=1}^{3}\Lambda_{Q}(x_i) = \big(1 + O_A((\log M)^{-A})\big)\prod_{\substack{p\nmid g\\p|N}}\bigg(1 - \frac{1}{(p-1)^2}\bigg)\prod_{\substack{p\nmid g\\p\nmid N}}\bigg(1 + \frac{1}{(p-1)^3}\bigg)\sum_{\substack{x_1+x_2+x_3 = N\\x_i\in \mc{S}_b\\(x_i,g) = 1}}1. \]    
\end{lemma}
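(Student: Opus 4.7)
Expand $\prod_{i=1}^{3}\Lambda_Q(x_i)=\sum_{q_1,q_2,q_3\le Q}\prod_i\frac{\mu(q_i)}{\phi(q_i)}c_{q_i}(x_i)$ and swap the order of summation. Setting $q=\on{lcm}(q_1,q_2,q_3)\le Q^3=M^{3\sigma_0}$, and using that $c_{q_i}$ is $q_i$-periodic, the inner sum partitions according to residues $(r_1,r_2,r_3)$ modulo $q$ satisfying $r_1+r_2+r_3\equiv N\pmod q$, weighted by $\prod_i c_{q_i}(r_i)$ and the count $\#\{x_1+x_2+x_3=N:\,x_i\in\mc{S}_b,\,x_i\equiv r_i\bmod q\}$.

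The key analytic input is a uniform equidistribution of $\mc{S}_b$-triples for $(q,g)=1$:
\[\#\{x_1+x_2+x_3=N:\,x_i\in\mc{S}_b,\,x_i\equiv r_i\bmod q\}=\frac{\#\{x_1+x_2+x_3=N:\,x_i\in\mc{S}_b\}}{q^2}\bigl(1+O_A((\log M)^{-A})\bigr).\]
I would establish this by first revealing $x_3$ and then the carry pattern of the digit addition $x_1+x_2=N-x_3$, decomposing the conditional law of $x_1$ as a superposition of product measures $\mu_{\vec{\mc{B}}}$ in the sense of \cref{def:product-set}. The combinatorial bounds \cref{lem:chernoff,lem:chernoff-hoeff} ensure that in most decompositions the $\mc{B}_j$ are dense ($|\mc{B}_j|\gg g^{4/5}$) on a positive proportion of coordinates, so \cref{lem:large-sieve} bounds the nontrivial Fourier coefficients $\wh{\mu_{\vec{\mc{B}}}}(a/q)$ for $(a,q)=1$; combined with orthogonality $\mbm{1}[x\equiv r\bmod q]=q^{-1}\sum_a e(a(x-r)/q)$ this yields the required power-saving estimate.

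Moduli with $(q,g)>1$ require separate treatment, since $\mc{S}_b$ is not exactly equidistributed modulo a prime $p\mid g$ (the residue $x\bmod p$ is controlled by the last digit $x_0\in\{0,\ldots,g-1\}\setminus\{b\}$, whose distribution modulo $p$ is within $O(1/g)$ of uniform but not exact). Fortunately the $g$-part of any squarefree $q$ divides $\prod_{p\mid g}p=O_g(1)$, so only a bounded set of such $q$ occurs and the corresponding local contributions are computed directly from the last-digit distribution. A standard Ramanujan-sum orthogonality computation then shows that among the $(q,g)=1$ terms only the diagonal $q_1=q_2=q_3$ survives, so the main term collapses to $\sum_{(q,g)=1}(\mu(q)/\phi(q))^{3}c_q(N)$, which factors multiplicatively as $\prod_{p\nmid g}(1-c_p(N)/(p-1)^3)$ and produces the two stated product factors. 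The bounded $p\mid g$ contributions rearrange to convert $\#\{x_1+x_2+x_3=N:\,x_i\in\mc{S}_b\}$ into the coprime count $\#\{x_1+x_2+x_3=N:\,x_i\in\mc{S}_b,\,(x_i,g)=1\}$ appearing on the right-hand side.

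The principal obstacle is the uniform equidistribution step at modulus $q$ as large as $M^{3\sigma_0}$: the digit-reveal procedure must be executed so that the resulting product-measure decomposition has enough dense $\mc{B}_j$ to trigger the large-sieve bound with power-saving exponent, which is precisely the purpose of the combinatorial lemmas in \cref{sec:combo} and the Fourier apparatus in \cref{sec:rest-digit}.
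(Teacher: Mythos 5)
The proposal's key step---a uniform, \emph{pointwise} equidistribution estimate
\[
\#\{x_1+x_2+x_3=N:\,x_i\in\mc{S}_b,\,x_i\equiv r_i\ (\mathrm{mod}\ q)\}=\frac{\#\{x_1+x_2+x_3=N:\,x_i\in\mc{S}_b\}}{q^2}\bigl(1+O_A((\log M)^{-A})\bigr)
\]
uniformly in $q\le Q^3=M^{3\sigma_0}$ and the residues $r_i$---is not achievable with the tools in this paper, and this is a genuine gap. After fixing $x_3$ and conditioning on the carry pattern, the deviation of $x_1$ from equidistribution mod $q$ is $q^{-1}\sum_{a\neq 0}\wh{\mu_{\vec{\mc{B}}}}(a/q)e(\cdot)$; for this to be $(\log M)^{-A}$ of the main term $\wh{\mu_{\vec{\mc{B}}}}(0)/q$, you would need $\sum_{a\neq 0}|\wh{\mu_{\vec{\mc{B}}}}(a/q)|\ll \wh{\mu_{\vec{\mc{B}}}}(0)(\log M)^{-A}$. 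But \cref{lem:large-sieve} and \cref{lem:well-output} give only the \emph{averaged} bound
\[
\sup_{\beta}\sum_{\substack{b\sim Q\\ d\mid b}}\sum_{(a,b)=1}\Bigl|\wh{\mu_{\vec{\mc{B}}}}\Bigl(\tfrac{a}{b}+\beta\Bigr)\Bigr|\ll(\log N)^{O(C)}\prod_j|\mc{B}_j|\cdot\Bigl(\tfrac{Q^2}{d}\Bigr)^{1/50},
\]
and specializing to a single modulus $q\sim Q$ (take $d=q$) gives at best $\sum_a|\wh{\mu_{\vec{\mc{B}}}}(a/q)|\ll(\log N)^{O(C)}\prod|\mc{B}_j|\cdot q^{1/50}$---a bound that \emph{grows} with $q$, so the relative error would be $q^{1/50+o(1)}$, not a negative power of $\log M$. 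The exponential $L^\infty$ bound in \cref{lem:well-output} only applies for the $g$-coprime part of the modulus up to $\exp((\log k)^4)$, which falls far short of $M^{3\sigma_0}$. In short, the large sieve controls Fourier coefficients on average over the modulus, never pointwise, and your sketch asserts a pointwise statement you cannot reach.

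The paper's actual argument avoids pointwise equidistribution entirely. Via \cref{lem:key-computation2} it fixes $x_3$, decomposes into well-conditioned product measures, expands only $\Lambda_Q(x_1)\Lambda_Q(x_2)$, and bounds the \emph{total sum} of all off-diagonal Fourier modes (those with $b_1/r_1-b_2/r_2$ not a fraction with denominator dividing $g$) by splitting according to the size of the combined denominator $r$: the averaged large sieve handles $r\ge(\log N)^{O(C)}$, while the $L^\infty$ exponential decay from the digit structure handles $1<r\le(\log N)^{O(C)}$, forcing $r=1$ in the surviving terms. Only after this binary diagonal collapse does the paper expand $\Lambda_Q(x_3)$ and repeat. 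Your final singular-series computation and the conversion to the coprime count $(x_i,g)=1$ do match the paper, but the critical middle step has to be executed in the summed form, and that is the distinction your plan misses.
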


We now complete the proof of \cref{thm:main}.
\begin{proof}[{Proof of \cref{thm:main}}]
Since $Q$ is sufficiently large, \cref{lem:sol-bound} and \cref{lem:3-AP} apply, and it suffices to estimate 
\[\sum_{\substack{x_1 + x_2 + x_3 = N\\x_i\in \mc{S}_{b,M}}}\prod_{i=1}^{3}\Lambda_{Q,\sigma_0}(x_i).\]
Note that for $g$ sufficiently large we have that 
\[\{x_1 + x_2 + x_3 \equiv N\imod g:(x_i,g) = 1\wedge x_i\neq b\}\ge 1.\]
Combining with \cref{lem:init}, we have that for $N$ sufficiently large that
\[\sum_{\substack{x_1 + x_2 + x_3 = N\\x_i\in \mc{S}_{b,M}\\(x_i,g) = 1}}1\gg_{g} \sum_{\substack{x_1 + x_2 + x_3 = N\\x_i\in \mc{S}_{b,M}}}1.\]
By \cref{lem:approx-remov}, we then have that it suffices to consider $\Lambda_{A,\on{Approx}}$ in place of $\Lambda_{Q,\sigma_0}$. We now expand out all the terms in $\Lambda_{A,\on{Approx}}$; there are $(\log M)^{O(A)}$ such terms by \cref{prop:zero-dens}. For any term involving a zero $x^{\rho - 1}F_{\chi_{\rho},Q}(x)$ for any of $x_1$, $x_2$, and $x_3$ we may apply \cref{lem:correction} (with $B$ sufficiently large with respect to $A$ and possibly after permuting the variables) and \cref{lem:upper} to remove these terms. (Handling terms where one of $x_i$ is smaller than $Q^{6+\eps}$ and thus not technically divisor bounded is routine as in \cref{lem:approx-remov}.) Therefore it suffices to estimate 
\[\sum_{\substack{x_1 + x_2 + x_3 = N\\x_i\in \mc{S}_{b,M}}}\prod_{i=1}^{3}\Lambda_{Q}(x_i)\]
which is exactly handled by \cref{lem:main-term}.
\end{proof}

\section{Proof of \texorpdfstring{\cref{lem:correction,lem:main-term}}{Lemmas 5.3 and 5.4}}
\subsection{Handling correction terms}
We first require a pair of results regarding the divisibility properties of $x_3$; these follow via inspecting the lowest order bits of $x_3$.
\begin{lemma}\label{lem:divis}
There exists an absolute constant $C = C_{\ref{lem:divis}}\ge 1$ such that the following hold. Fix an integer $d\le M$ and $g\ge C_{\ref{lem:divis}}$. Then
\[\sum_{\substack{x_1 + x_2 + x_3 = N\\x_i\in \mc{S}_b}}\mbm{1}[d|x_1+x_2]\ll_{g} d^{-1/C} \sum_{\substack{x_1 + x_2 + x_3 = N\\x_i\in \mc{S}_b}}1.\]
\end{lemma}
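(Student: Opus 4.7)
Since $x_1+x_2 = N-x_3$, the condition $d\mid x_1+x_2$ is equivalent to $x_3\equiv N\pmod d$, so the goal is to bound $\mb{P}_{\substack{x_1+x_2+x_3=N\\x_i\in\mc{S}_b}}[x_3\equiv N\!\!\!\pmod d]\ll_g d^{-1/C}$. The plan is to mirror the endgame of the proof of \cref{lem:divisor-function}.

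First, as in that proof, reveal the digit-sums $\sum_i x_{i,j}$ and all carries in the addition $x_1+x_2+x_3=N$; by \cref{lem:chernoff,lem:chernoff-hoeff}, outside an event of probability $\ll M^{-\Omega(1)}\ll d^{-1/C}$ (using $d\le M$), the conditional distribution of $x_3$ is a product measure $\mu_{\vec{\mc{B}}}$ from \cref{def:product-set} with $|\mc{B}_j|\ge g^{4/5}$ for at least $7k/16$ indices $j<k/2$ and (after further conditioning) singletons elsewhere.

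Next, split $d = d_1 d_2$ with every prime factor of $d_1$ dividing $g$ and $(d_2,g)=1$. Since $\max(d_1,d_2)\ge d^{1/2}$, it suffices to prove, for some absolute $\alpha>0$,
\[\mb{P}_{\mu_{\vec{\mc{B}}}}[d_j\mid x_3-N]\ll_g d_j^{-\alpha}\qquad(j=1,2);\]
then $\mb{P}[d\mid x_3-N]\le\min\{\cdot\}\le\max(d_1,d_2)^{-\alpha}\ll_g d^{-\alpha/2}$, which is $\ll d^{-1/C}$ for $C>2/\alpha$.

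For the $d_2$ bound, the argument at the very end of the proof of \cref{lem:divisor-function} applies essentially verbatim because $(d_2,g)=1$: Ramanujan-expand $\mbm{1}[d_2\mid x_3-N]=\frac{1}{d_2}\sum_{b\mid d_2}\sum_{(c,b)=1}e(c(x_3-N)/b)$, use pigeonhole to pick a segment $\mc{I}\subset[0,k/2)$ of length $\asymp\log_g d_2$ in which at least $7/8$ of the $\mc{B}_j$'s are large, absorb the prefix shift $g^{t_1}$ via invertibility modulo $d_2$, and invoke \cref{lem:large-sieve}. This gives $\mb{P}[d_2\mid x_3-N]\ll_g d_2^{-1/2+o(1)}$ whenever $\log_g d_2\le ck$ for a small absolute $c$; in the remaining range $d_2>g^{ck}$, the trivial count $\#\{x_3\equiv N\pmod{d_2}:x_3\in\mc{S}_b\cap[0,M]\}\le M/d_2+O(1)$ combined with \cref{lem:sol-bound} gives the same bound.

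For the $d_1$ bound, note that the condition $d_1\mid x_3-N$ depends only on $x_3\bmod g^{\ell}$ for some $\ell=O_g(\log d_1)$, i.e.\ only on the low digits $x_{3,0},\dots,x_{3,\ell-1}$, each of which is uniform on an interval $\mc{B}_j$ missing at most two digits. A direct Fourier calculation on this finite product, exploiting that the full geometric sum $\sum_{x=0}^{g-1}e(axg^j/d_1)$ vanishes whenever $d_1\mid ag^{j+1}$ and $d_1\nmid ag^j$ (and controlling the Gauss-type correction $e(abg^j/d_1)$ coming from the missing digit), gives $\mb{P}[d_1\mid x_3-N]\ll_g d_1^{-1+o(1)}$. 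Combining yields $\mb{P}[d\mid x_3-N]\ll_g d^{-1/4+o(1)}\ll d^{-1/C}$ for any $C>4$.

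\textbf{Main obstacle.} The delicate step is the $d_2$ bound: the segment/shift device of \cref{lem:divisor-function} crucially uses $(d_2,g)=1$ to invert $g^{t_1}$ modulo $d_2$, so the splitting $d=d_1d_2$ in this proposal is what allows that machinery to be reused. One should check that after conditioning (in the $d_1$ analysis) on the low digits of $x_3$, the high-digit distribution retains both product structure and enough ``large'' $\mc{B}_j$'s to invoke \cref{lem:large-sieve}; this is immediate provided $d_1\le d^{1/2}$, which is the only case where the $d_2$ bound is actually needed. The $d_1$ half is a routine digital-equidistribution statement modulo $d_1\mid g^\infty$, with the only wrinkle being the ``missing digit'' perturbation.
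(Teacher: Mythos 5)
Your proposal reaches the right conclusion but takes a genuinely different and substantially more complicated route than the paper, and the half of your argument dealing with $d_1$ (the part of $d$ built from primes dividing $g$) has a real gap.

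The paper's proof is elementary and requires no Fourier analysis, no Ramanujan sums, no large sieve, and no splitting $d=d_1d_2$. It sets $L=\lfloor\log d/(4\log g)\rfloor$, so that $g^{L+1}<d$ once $d\ge g^{O(1)}$ (the remaining small $d$ are absorbed into the $\ll_g$ constant). By \cref{lem:chernoff} and \cref{lem:chernoff-hoeff} applied to the indices $j\in[0,L]$, with probability $1-\exp(-\Omega(L\log g))=1-d^{-\Omega(1)}$ at least $L/2$ of those positions have $\min(x_{2,j}+x_{3,j},2g-x_{2,j}-x_{3,j})>g^{1/2}$. After revealing the high digits of $x_3$ and the digit-sums with carries for $j\le L$, the low digits $(x_{3,j})_{j\le L}$ are uniform on a product with at least $g^{\Omega(L)}$ equally likely values. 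Since $g^{L+1}<d$, all of these give distinct residues of $x_3$ modulo $d$, so at most one can equal $N\bmod d$; the conditional probability is therefore $\le g^{-\Omega(L)}=d^{-\Omega(1)}$, done. The key insight your proposal misses is that one does not need an asymptotic $\mathbb{P}\approx 1/d$: a crude ``at most one of exponentially many equally likely values works'' counting suffices, and that sidesteps all the Fourier machinery.

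As for your route: the $d_2$ part (reusing the segment/shift device from \cref{lem:divisor-function}) is plausible, though not quite ``verbatim'' --- \cref{lem:large-sieve} is stated for a dyadic sum over denominators $b\sim Q$ with $d\mid b$, whereas you want $\sum_{b\mid d_2}\sum_{(c,b)=1}|\wh{\mu}(c/b)|$ for a single fixed $d_2$, so the reduction needs rewriting. The $d_1$ part is the genuine gap. First, ``a direct Fourier calculation gives $\mathbb{P}[d_1\mid x_3-N]\ll_g d_1^{-1+o(1)}$'' is not justified and is not covered by any lemma in the paper (the $L^\infty$ estimate in \cref{lem:well-output} handles only the $(d_2,g)=1$ factor). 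Second, and more importantly, your conditioning produces a Chernoff guarantee over the whole range $[0,k/2)$, but the $d_1$-event depends only on the low $\ell=O_g(\log d_1)$ digits, and for $d_1$ much smaller than $M$ there is nothing stopping all of $\mc{B}_0,\ldots,\mc{B}_{\ell-1}$ from being tiny on your ``good'' event; you would have to rerun Chernoff localized to $[0,\ell)$, at which point you have essentially rediscovered the paper's argument but with unnecessary Fourier overhead on top.
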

\begin{proof}
The desired condition is equivalent to $x_3 \equiv N\imod d$. Let $L = \lfloor \log d/(4\log g)\rfloor$; we have that $L\le k/4$. Via \cref{lem:chernoff} and \cref{lem:chernoff-hoeff}, we find that 
\[\mb{P}_{\substack{x_1 + x_2 + x_3 = N\\x_i\in \mc{S}_b}}\bigg[\sum_{j=0}^{L}\mbm{1}\big[\min(x_{2,j} + x_{3,j}, 2g - x_{2,j}-x_{3,j})\le g^{1/2}\big]\ge L/2\bigg]\ll_{g} \exp\big(-\Omega(L\cdot \log g)\big) = d^{-\Omega(1)}.\]
We now consider $x_{3}$ having revealed $x_{3,j}$ for $j>L$ and $x_{2,j} + x_{3,j}$ for $j\le L$ under the assumption that 
\[\sum_{j=0}^{L}\mbm{1}\big[\min(x_{2,j} + x_{3,j}, 2g - x_{2,j}-x_{3,j})\le g^{1/2}\big]\le L/2;\]
the complementary event contributes negligibly. Note that there are $g^{\Omega(L)}$ choices for $(x_{3,j})_{0\le j\le L}$ and there is at most $1$ choice which guarantees $x_3\equiv N\imod d$. The result follows. 
\end{proof}

The second lemma provides a variant of \cref{lem:divis} which is ultimately used to guarantee various divisibility constraints on $x_1 + x_2$. 
\begin{lemma}\label{lem:divis-small}
There exists a constant $C = C_{\ref{lem:divis-small}}$ such that the following holds. Fix $g\ge C$, $A\ge 1$, and $d\le M$. Then 
\[\sum_{\substack{x_1 + x_2 + x_3 = N\\x_i\in \mc{S}_b}}\mbm{1}[d/(x_1+x_2,d)\le d^{1/2}]\ll_{g}  d^{-1/C}\sum_{\substack{x_1 + x_2 + x_3 = N\\x_i\in \mc{S}_b}}1.\]
\end{lemma}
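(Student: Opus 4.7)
The plan is to reduce \cref{lem:divis-small} to the divisibility estimate \cref{lem:divis} via a straightforward union bound over the relevant divisors of $d$. Specifically, the first step is to recast the condition: the inequality $d/(x_1+x_2,d)\le d^{1/2}$ is equivalent to $(x_1+x_2,d)\ge d^{1/2}$, which in turn is equivalent to the existence of some divisor $e$ of $d$ with $e\ge d^{1/2}$ such that $e\mid x_1+x_2$. Consequently one has the pointwise bound
\[
\mbm{1}\bigl[d/(x_1+x_2,d)\le d^{1/2}\bigr]\le \sum_{\substack{e\mid d\\ e\ge d^{1/2}}}\mbm{1}[e\mid x_1+x_2].
\]

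The second step is to estimate each term of the resulting sum. Since $e\le d\le M$, the hypothesis of \cref{lem:divis} applies with $d$ replaced by $e$, yielding
\[
\sum_{\substack{x_1+x_2+x_3=N\\ x_i\in\mc{S}_b}}\mbm{1}[e\mid x_1+x_2]\ll_{g} e^{-1/C_{\ref{lem:divis}}}\sum_{\substack{x_1+x_2+x_3=N\\ x_i\in\mc{S}_b}}1 \le d^{-1/(2C_{\ref{lem:divis}})}\sum_{\substack{x_1+x_2+x_3=N\\ x_i\in\mc{S}_b}}1,
\]
using $e\ge d^{1/2}$ in the final step.

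The third step is to sum over the admissible divisors $e$. The number of divisors of $d$ is $\tau(d)\ll_\eps d^{\eps}$ for any $\eps>0$, so choosing $\eps$ much smaller than $1/(2C_{\ref{lem:divis}})$, the total contribution is at most
\[
\tau(d)\cdot d^{-1/(2C_{\ref{lem:divis}})}\sum 1 \ll d^{-1/C}\sum 1
\]
for a sufficiently large absolute constant $C=C_{\ref{lem:divis-small}}$. For bounded values of $d$ the statement is trivial by enlarging $C$, so this completes the reduction.

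I do not anticipate any serious obstacle: the entire argument is a union bound followed by the divisor bound, and the only delicate step is ensuring that the saving $e^{-1/C_{\ref{lem:divis}}}\le d^{-1/(2C_{\ref{lem:divis}})}$ from \cref{lem:divis} dominates the divisor function blowup $\tau(d)=d^{o(1)}$. Since the saving is a fixed positive power of $d$ while the loss is subpolynomial, this is automatic, and the resulting exponent $1/C$ in \cref{lem:divis-small} will simply be slightly worse than $1/C_{\ref{lem:divis}}$.
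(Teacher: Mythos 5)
Your argument is correct and is essentially the same as the paper's: the paper likewise partitions according to the value $d'=(x_1+x_2,d)$, notes there are only $\tau(d)=d^{o(1)}$ possible divisors all satisfying $d'\ge d^{1/2}$, and applies \cref{lem:divis} to each. The only cosmetic difference is that you phrase it as a union bound over divisors $e\ge d^{1/2}$ of $d$ rather than as a partition by the exact gcd, which is logically equivalent.
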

\begin{proof}
Let $(x_1+x_2,d) = d'$ and note that there are only $\tau(d) = d^{o(1)}$ choices for $d'$. Noting that $d'\ge d^{1/2}$ the conclusion follows from \cref{lem:divis} (assuming $C_{\ref{lem:divis-small}}$ is sufficiently large).
\end{proof}

As outlined in the introduction, we only extract cancellation after fixing the sum $x_1 + x_2$. In order to proceed efficiently, this necessitates extracting a certain additional character sum from the characters present in \cref{def:aux-func}. We now collect the necessary bounds; they are proven in \cref{app:char-sum} and are either elementary or follow from the Weil bound. It suffices to prove any power-type cancellation bounds when $|a_2-a_1|$ is sufficiently small with respect to $\max(a_1,a_2)$ for our application.

\begin{lemma}\label{lem:char-sum}
Let $p$ be prime, $a_1\ge a_2$, and $\chi_i$ be primitive Dirichlet characters with moduli $p^{a_i}$. Define
\[W(b_1,b_2,T) = p^{-a_2}\sum_{t\in \mbf{Z}/p^{a_2}\mbf{Z}}\chi_1(p^{a_1-a_2}t + b_1)\chi_2(t + b_2)e\bigg(\frac{Tt}{p^{a_2}}\bigg).\]

If $a_1> 1$, $\alpha = \lfloor a_1/2\rfloor$, and $\alpha + 2 > 2(a_1-a_2) + v_p(T)$, then 
\[\sup_{b_1,b_2}|W(b_1,b_2,T)|\le \min\big(1, 32 \cdot p^{-\lceil (\alpha - v_p(T))/2\rceil + (a_1-a_2)}\big).\]
Furthermore if $(a_1,a_2) = (1,1)$ and $p\nmid T$, then 
\[\sup_{b_1,b_2}|W(b_1,b_2,T)|\le 4p^{-1/2}.\]
\end{lemma}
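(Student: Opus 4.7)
The plan for the simple case $(a_1,a_2) = (1,1)$ with $p \nmid T$ is direct: the inner sum $\sum_{t \in \mbf{Z}/p\mbf{Z}} \chi_1(t+b_1)\chi_2(t+b_2)e(Tt/p)$ is a mixed multiplicative/additive character sum to which Weil's bound applies. Since $\chi_1, \chi_2$ are nontrivial characters mod $p$ and $T$ is a unit, this yields at most $3\sqrt{p}$; dividing by $p$ gives the stated bound $|W| \leq 4p^{-1/2}$.

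For $a_1 \geq 2$ I would carry out a $p$-adic stationary phase argument. First decompose $t = t_0 + p^{\alpha+1}s$ with $t_0 \in \mbf{Z}/p^{\alpha+1}\mbf{Z}$ and $s \in \mbf{Z}/p^{a_2-\alpha-1}\mbf{Z}$; the hypothesis $\alpha + 2 > 2(a_1-a_2) + v_p(T)$ together with $a_2 \leq a_1 \leq 2\alpha+1$ forces $a_2 \geq \alpha + 1$. The key input is the standard expansion: for any primitive Dirichlet character $\chi$ of conductor $p^a$ and any $\beta$ with $2\beta \geq a$ (with a small shift for $p=2$), one has $\chi(1 + p^{\beta} u) = e(c_\chi u / p^{a - \beta})$ for some $c_\chi$ coprime to $p$---essentially the $p$-adic logarithm, and $c_\chi$ is a unit precisely because $\chi$ is primitive. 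Applying this to both $\chi_1(p^{a_1-a_2}t + b_1)$ and $\chi_2(t + b_2)$ by factoring out the value at $t = t_0$, together with $e(Tt/p^{a_2}) = e(Tt_0/p^{a_2})e(Ts/p^{a_2-\alpha-1})$, makes the $s$-sum a complete additive character sum which vanishes unless
\[A(t_0) := \frac{c_{\chi_1}}{p^{a_1-a_2}t_0+b_1} + \frac{c_{\chi_2}}{t_0+b_2} + T \equiv 0 \pmod{p^{a_2-\alpha-1}},\]
in which case it contributes $p^{a_2-\alpha-1}$. Hence
\[|W(b_1,b_2,T)| \leq p^{-(\alpha+1)}\cdot\#\bigl\{t_0 \in \mbf{Z}/p^{\alpha+1}\mbf{Z} : A(t_0) \equiv 0 \pmod{p^{a_2-\alpha-1}}\bigr\}.\]

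Bounding this count is where I expect the main obstacle. Clearing denominators (legitimate since WLOG $b_1, b_2$ are units, else the corresponding $\chi_i$ vanishes and $W = 0$) the condition becomes $B(t_0) \equiv 0 \pmod{p^{a_2-\alpha-1}}$ for an explicit quadratic $B(t) = Tp^{a_1-a_2}t^2 + A_1 t + A_0$ with leading coefficient of valuation $v_p(T) + (a_1-a_2)$. For $p$ odd I would complete the square via $4A_2 B(t_0) = y^2 - D$ with $y = 2A_2 t_0 + A_1$ and $D = A_1^2 - 4A_0A_2$, translating the problem into counting square roots of $D$ modulo $p^{a_2-\alpha-1+v_p(T)+(a_1-a_2)}$; the case $p = 2$ requires a short separate elementary argument. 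Expanding algebraically one finds the identity
\[D = 4c_{\chi_1}c_{\chi_2}p^{a_1-a_2} + \bigl((c_{\chi_1} - c_{\chi_2}p^{a_1-a_2}) + T(b_1 - p^{a_1-a_2}b_2)\bigr)^2,\]
from which---using that $c_{\chi_1}, c_{\chi_2}$ are units---one reads off that $v_p(D)$ is controlled in terms of $v_p(T)$ and $a_1 - a_2$. Combining the elementary bound that $y^2 \equiv D \pmod{p^M}$ has at most $2p^{\lceil\min(v_p(D),M)/2\rceil}$ solutions with the fact that $y = 2A_2t_0 + A_1$ parameterizes $t_0$ with multiplicity $p^{v_p(A_2)}$, and bookkeeping the lifts between residue classes, will yield the claimed inequality with the constant $32$ absorbing edge cases (including $p = 2$). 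The hypothesis $\alpha + 2 > 2(a_1-a_2) + v_p(T)$ is precisely what forces $\lceil(\alpha - v_p(T))/2\rceil > a_1 - a_2$, ensuring the resulting bound improves on the trivial $|W| \leq 1$.
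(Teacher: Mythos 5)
Your approach for $a_1 \geq 2$ is structurally the same as the paper's: reduce via $p$-adic stationary phase to counting roots of a quadratic mod a prime power, then count square roots. The paper lifts the sum to $\mbf{Z}/p^{a_1}\mbf{Z}$ and invokes the Iwaniec--Kowalski stationary-phase lemma (Lemma~\ref{thm:evenbound}) with the modulus $p^{a_1}$, while you slice directly over $\mbf{Z}/p^{a_2}\mbf{Z}$; these are equivalent after unwinding. Your algebraic identity for the discriminant $D$ checks out. However there are several points that need repair or amplification.

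First, the slicing point is slightly off. You cut at $t_0 \in \mbf{Z}/p^{\alpha+1}\mbf{Z}$ with $\alpha = \lfloor a_1/2\rfloor$, which gives $|W| \le p^{-\alpha-1}\#\{t_0\}$ and ultimately an exponent of the form $-\lceil (M-\nu)/2\rceil$ with $M = a_2-\alpha-1$ and $\nu = v_p(T)+(a_1-a_2)$. When $a_1$ is even this is exactly one worse than $-\lceil(\alpha - v_p(T))/2\rceil + (a_1-a_2)$, so your bound is a full factor of $p$ weaker than claimed. The fix is to slice at $\lceil a_1/2\rceil$ (equal to $\alpha$ if $a_1$ is even, $\alpha+1$ if odd). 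The required expansion $\chi(1+p^\beta u) = e(c_\chi u/p^{a-\beta})$ needs $2\beta \ge a$; for $\chi_1$ that means $2(a_1-a_2+\beta)\ge a_1$, and for $\chi_2$ it means $2\beta \ge a_2$, both of which hold with $\beta = \lceil a_1/2\rceil$ in all the cases where the claimed bound is nontrivial (when $a_2 < \lceil a_1/2\rceil$ the hypothesis $\alpha+2 > 2(a_1-a_2)+v_p(T)$ forces the claimed bound to be $\ge 1$, so there is nothing to prove). Relatedly, your claim that the hypotheses force $a_2\geq\alpha+1$ is false when $a_1 = 2$ and $a_2 = 1$; it needs the ``or else trivial'' caveat.

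Second, the discriminant discussion is misleading. From your identity $D = 4c_1c_2p^{a_1-a_2} + (\cdots)^2$, the valuation $v_p(D)$ is \emph{not} controlled by $v_p(T)$ and $a_1-a_2$: for suitable $b_1,b_2$ the square can cancel the first term to arbitrary depth when $a_1-a_2$ is even. Fortunately your bound never needs $v_p(D)$ small, because the universal estimate $\#\{y^2\equiv D \bmod p^k\}\le 2p^{k-\lceil k/2\rceil}$ (the paper's Lemma~\ref{lem:square-count-odd}) holds for every $D$, including $D\equiv 0$. So delete the sentence about reading off $v_p(D)$; that step would be circular if it mattered. You also need a second branch, as the paper has: if the linear coefficient $A_1$ of $B$ has strictly smaller valuation than $A_2$, completing the square divides by something of the wrong valuation and the right move is Hensel's lemma giving a unique root, not a square-count.

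Third, the $(1,1)$ case. You invoke the Weil bound directly on $\sum_t\chi_1(t+b_1)\chi_2(t+b_2)e(Tt/p)$, a \emph{mixed} multiplicative-additive sum, but Theorem~\ref{thm:weil} as stated in the paper only covers $\sum_x\chi(f(x))$. The mixed-sum Weil bound is of course a genuine theorem, but if you use it you should cite it, and you also need to rule out the degenerate configuration: if $\chi_1 = \overline{\chi_2}$ and $b_1\equiv b_2$, then writing $\chi_i = \chi^{k_i}$ the polynomial $(t+b_1)^{k_1}(t+b_2)^{k_2}$ becomes a perfect $(p-1)$-th power and Weil is inapplicable; you must handle $\sum_t e(Tt/p)\mbm{1}_{p\nmid(t+b_1)}$ by hand (it evaluates to $-1/p$). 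The paper sidesteps the mixed-Weil issue by first applying Cauchy--Schwarz to kill the additive phase and then using the pure multiplicative Weil bound, at the cost of a weaker exponent ($3p^{-1/4}$ rather than $4p^{-1/2}$). Your route, once degenerate cases are handled, actually matches the stated constant better.

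Fourth, ``the case $p=2$ requires a short separate elementary argument'' substantially understates the work. The unit group $(\mbf{Z}/2^n\mbf{Z})^\ast$ is not cyclic, so $\chi_1,\chi_2$ are not powers of a single $\chi$ and carry extra quadratic twists; ``completing the square'' requires multiplying by $4$ rather than a unit; the square-count lemma has a worse constant (Lemma~\ref{lem:square-count-even}); and the $p$-adic expansion of $\chi(1+2^\beta u)$ has an extra correction. The paper's Lemma~\ref{lem:char-sum-even} is a full page of careful bookkeeping, and it first observes that the claimed bound is vacuous unless $a_1\ge 21$ so as to avoid small-modulus anomalies. You should either reproduce that level of detail or explicitly flag it as the hardest part of the proof.
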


We next require the following elementary fact regarding the subtracting fractions with fixed denominators.
\begin{lemma}\label{lem:frac}
Let $p$ be prime and $a_1\le a_2$. Then 
\[\sup_{t\in \mbf{Z}}\bigg|\bigg\{\frac{b_1}{p^{a_1}} + \frac{b_2}{p^{a_2}} \equiv \frac{t}{p^{a_2}}\imod 1: b_i\in \mbf{Z}/p^{a_i}\mbf{Z}\bigg\}\bigg|\le p^{a_1}.\]
Furthermore if $a_1<a_2$, then 
\[\sup_{\substack{t\in \mbf{Z}\\(t,p) \neq 1}}\bigg|\bigg\{\frac{b_1}{p^{a_1}} + \frac{b_2}{p^{a_2}} \equiv \frac{t}{p^{a_2}}\imod 1: b_i\in (\mbf{Z}/p^{a_i}\mbf{Z})^{\ast}\bigg\}\bigg|=0.\]
\end{lemma}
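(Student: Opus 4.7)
The plan is to clear denominators to convert both statements into statements about a single linear congruence modulo $p^{a_2}$. Specifically, multiplying through by $p^{a_2}$, the relation $\tfrac{b_1}{p^{a_1}} + \tfrac{b_2}{p^{a_2}} \equiv \tfrac{t}{p^{a_2}} \pmod 1$ is equivalent to
\[p^{a_2-a_1} b_1 + b_2 \equiv t \pmod{p^{a_2}}.\]
This rewriting reduces everything to elementary counting.

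For the first assertion, I would fix any $b_1 \in \mbf{Z}/p^{a_1}\mbf{Z}$ (noting the value of $p^{a_2-a_1}b_1$ mod $p^{a_2}$ is well-defined because the difference of two lifts of $b_1$ is a multiple of $p^{a_1}$, so after multiplying by $p^{a_2-a_1}$ is a multiple of $p^{a_2}$). For each such $b_1$, the displayed congruence then uniquely determines $b_2 \bmod p^{a_2}$. Summing gives at most $p^{a_1}$ solutions, which is the claimed bound.

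For the second assertion, we now require $b_1$ and $b_2$ to be units and assume $a_1 < a_2$. Reducing the displayed congruence modulo $p$ and using $a_2 - a_1 \ge 1$ gives
\[b_2 \equiv t \pmod{p}.\]
Since $b_2 \in (\mbf{Z}/p^{a_2}\mbf{Z})^{\ast}$, we have $p \nmid b_2$, hence $p \nmid t$, i.e.\ $(t,p)=1$. This contradicts the hypothesis $(t,p)\ne 1$, so no solutions exist.

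I expect no real obstacles here: the lemma is purely bookkeeping once the fractions are cleared. The only mild subtlety is verifying that $p^{a_2-a_1}b_1 \bmod p^{a_2}$ is well-defined from $b_1 \in \mbf{Z}/p^{a_1}\mbf{Z}$, which is immediate.
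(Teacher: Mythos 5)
Your proof is correct and matches the paper's approach: both clear denominators to reduce to the congruence $p^{a_2-a_1}b_1 + b_2 \equiv t \pmod{p^{a_2}}$, observe that fixing $b_1$ determines $b_2$ uniquely (giving at most $p^{a_1}$ pairs), and for the second part reduce modulo $p$ to force $(t,p)=1$. Your phrasing of the counting step is if anything a little cleaner than the paper's.
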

\begin{proof}
For the first part, 
\[\frac{b_1}{p^{a_1}} + \frac{b_2}{p^{a_2}} = \frac{p^{a_2-a_1}b_1 + b_2}{p^{a_2}}\]
and therefore, 
\[t\equiv b_2 \imod p^{a_2-a_1}\text{ and }b_1 \equiv (t-b_2)p^{a_1-a_2}\imod p^{a_1}.\]
Thus, given $t$ and $b_1$, the first relation determines the last $(a_2-a_1)$ digits of $b_2$ in base $p$ while the second determines the first $a_1$ as desired. For the second part, note that if $a_2>a_1$, then
\[(p^{a_2-a_1}b_1 + b_2, p) = (b_2,p) = 1\]
and the result follows.
\end{proof}

We finally require the (technical) notion of a ``well-conditioned'' product measures.

\begin{definition}\label{def:well-conditioned}
A sequence $\mc{B} = (\mc{B}_0,\ldots,\mc{B}_{k-1})$ is $C$-well conditioned if:
\begin{itemize}
    \item For all intervals $\mc{I}\subseteq [k/4]$ of length $C\log\log N/\log g$, we have 
    \[\sum_{i\in \mc{I}}\mbm{1}[|\mc{B}_i|\ge g^{99/100}]\ge 99|\mc{I}|/100. \]
    \item $\mc{B}_i$ are intervals missing at most $2$ points
    \item For all $1\le a<b$ with $(b,g) = 1$ and $b\le \exp((\log\log N)^5)$, we have that 
    \[\sum_{\substack{j\in [k/8,k/4]\\|\mc{B}_j|\ge 4}}\mbm{1}[\snorm{a \cdot g^j/b}_{\mbf{R}/\mbf{Z}}\ge 1/g^2]\ge \frac{k}{40\log b}.\]
\end{itemize}
\end{definition}
\begin{remark}
The first and second conditions are used to guarantee that various ``large--sieve'' estimates are effective at all scales. The second and third conditions are used to prove various Fourier coefficients of sufficiently small denominator are ``tiny.''
\end{remark}

We now derive the associated consequences of being well-conditioned. We remark that the second estimate is closely related to \cite[Lemma~5.4]{May22}.
\begin{lemma}\label{lem:well-output}
There exists $C_{\ref{lem:well-output}}\ge 1$ such that the following holds. Let $g,C\ge C_{\ref{lem:well-output}}$, $\mc{B}$ be $C$-well conditioned, and $d\le Q\le g^{k/10}$. Then we have the large sieve estimate
\[\sup_{\beta\in \mbf{R}}\sum_{\substack{b\sim Q\\d|b}}\sum_{\substack{0<a<b\\(a,b) = 1}}\bigg|\wh{\mu_{\vec{\mc{B}}}}\bigg(\frac{a}{b}+ \beta\bigg)\bigg|\ll (\log N)^{O(C)} \cdot \prod_{j=0}^{k-1}|\mc{B}_j| \cdot \bigg(\frac{Q^2}{d}\bigg)^{1/50}.\]
Furthermore if $b = d_1d_2$ with $d_1|g^{k/10}$, $(d_2,g) = (a,b) =1$, $d_2\le \exp((\log k)^{4})$, and $d_2\neq 1$, then we have the $L^\infty$ estimate
\[\bigg|\wh{\mu_{\vec{\mc{B}}}}\bigg(\frac{a}{b}\bigg)\bigg| \le \prod_{j=0}^{k-1}|\mc{B}_j| \cdot \exp\bigg(\frac{-ck}{g^{5}\log d_2}\bigg)\]
for some $c > 0$.
\end{lemma}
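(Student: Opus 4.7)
The plan is to handle the two estimates by distinct mechanisms.

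For the large sieve bound, I would apply \cref{lem:large-sieve} with the parameter $t$ chosen to balance its two summands $(Q^2/d)(C\log g)^t$ and $(Cg\log g)^t$. The first condition in \cref{def:well-conditioned}, applied to any initial segment $[0,t)\subseteq[k/4]$ partitioned into consecutive blocks of length $\lceil C\log\log N/\log g\rceil$, yields $\prod_{j<t}|\mc{B}_j|\ge g^{(99/100)^2 t}(\log N)^{-O(C)}$ (the polylog loss absorbing boundary effects). The balance point $t\asymp \log(Q^2/d)/\log g$ lies in $[0,k/5]\subset[0,k/4]$ thanks to $Q\le g^{k/10}$, hence is admissible. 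Substituting back into \cref{lem:large-sieve} and dividing by $\prod_{j<t}|\mc{B}_j|$, both summands are controlled by $(\log N)^{O(C)}\prod_{j=0}^{k-1}|\mc{B}_j|\cdot(Q^2/d)^{1/50}$, yielding the claim.

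For the $L^\infty$ estimate, I would exploit the decomposition $b=d_1d_2$. Since $d_1\mid g^{k/10}$ and $(d_2,g)=1$, for every $j\ge k/10$ we have $g^j\Theta\equiv c_j/d_2\pmod 1$ with $c_j=(g^j/d_1)a\bmod d_2\in(\mbf{Z}/d_2\mbf{Z})^*$; using $(g,d_2)=1$, this takes the form $c_j\equiv a'g^j\pmod{d_2}$ for a single $a'\in(\mbf{Z}/d_2\mbf{Z})^*$. The hypothesis $d_2\le\exp((\log k)^4)\le\exp((\log\log N)^5)$ brings $d_2$ within the scope of condition (iii) of \cref{def:well-conditioned} applied with modulus $d_2$ and residue $a'$, producing a set $J^*\subseteq[k/8,k/4]$ with $|J^*|\ge k/(40\log d_2)$ on which $|\mc{B}_j|\ge 4$ and $\snorm{g^j\Theta}_{\mbf{R}/\mbf{Z}}\ge 1/g^2$. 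Since by condition (i) at most $\lesssim k/800$ positions in $[k/8,k/4]$ violate $|\mc{B}_j|\ge g^{99/100}$, the refined set $J^{**}=J^*\cap\{j:|\mc{B}_j|\ge g^{99/100}\}$ satisfies $|J^{**}|\ge k/(50\log d_2)$ (unless $\log d_2$ is so large that the target bound is vacuous, in which case there is nothing to prove).

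For each $j\in J^{**}$, decomposing $\mc{B}_j=I_j\setminus E_j$ with $|I_j|\le g$ and $|E_j|\le 2$, the Fej\'er-kernel identity $|\sum_{z\in I_j}e(-zg^j\Theta)|/|I_j|\le 1-\Omega((|I_j|\snorm{g^j\Theta}_{\mbf{R}/\mbf{Z}})^2)\le 1-\Omega(g^{-2-o(1)})$ gives a per-factor savings. The main obstacle is that in isolation this savings is dominated by the $O(1/|\mc{B}_j|)\le O(g^{-99/100})$ additive perturbation from the missing digits $E_j$, so a purely pointwise bound on $|f_j|/|\mc{B}_j|$ fails to strictly descend below $1$. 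To resolve this I would group factors into contiguous runs and exploit the telescoping identity
\[\prod_{j=a}^{b-1}D_g(g^j\Theta)=\frac{1-e(-g^b\Theta)}{1-e(-g^a\Theta)}\]
for runs on which $|I_j|=g$ and $|E_j|=0$; the magnitude of the right-hand side is at most $1/|\sin(\pi g^a\Theta)|\le d_2/2$, giving a savings over the trivial $g^{b-a}$ bound that is exponentially stronger than required. The non-clean positions and missing digits within each run are absorbed into the error term via inclusion-exclusion on the $E_j$ (echoing \cite[Lemma 5.4]{May22}), the (few) non-clean runs are bounded trivially using the $L^1$ control from \cref{lem:L1}, and assembling the contributions across $J^{**}$ produces the required $\exp(-\Omega(k/(g^5\log d_2)))$ saving.
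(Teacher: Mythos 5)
Your treatment of the large-sieve half is essentially the paper's: apply \cref{lem:large-sieve} with $t\asymp\log(Q^2/d)/\log g\le k/5$, then use condition~(i) of \cref{def:well-conditioned} to certify that $\prod_{j<t}|\mc{B}_j|\ge g^{(99/100-o(1))t}$, after which the two summands in \cref{lem:large-sieve} are both swallowed by $(\log N)^{O(C)}\prod_j|\mc{B}_j|\cdot(Q^2/d)^{1/50}$. No issues there.

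The $L^\infty$ half is where there is a real gap, and the obstacle you yourself flag is the tell. You observe, correctly, that the per-factor Fej\'er saving $\Omega(g^{-2-o(1)})$ coming from the interval $I_j$ at a frequency with $\snorm{g^j\Theta}\ge 1/g^2$ is dwarfed by the $O(1/|\mc{B}_j|)\le O(g^{-99/100})$ perturbation from removing $E_j$, so that $|\sum_{z\in\mc{B}_j}e(\cdot)|/|\mc{B}_j|$ need not drop below $1$ pointwise. Your proposed repair --- telescoping $\prod_{j=a}^{b-1}D_g(g^j\Theta)=(1-e(-g^b\Theta))/(1-e(-g^a\Theta))$ over ``clean runs'' --- requires $\mc{B}_j=\{0,\ldots,g-1\}$ with $E_j=\emptyset$ on those runs. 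But in this paper the $\mc{B}_j$ arise as intervals of length roughly $\min(x_{1,j}+x_{2,j},2g-x_{1,j}-x_{2,j})$ determined by the carries in a digit addition, so they are generically \emph{proper} subintervals of $\{0,\ldots,g-1\}$ and the telescoping identity simply is not available. The inclusion-exclusion over the sets $E_j$ also involves $2^{\#\{j:E_j\neq\emptyset\}}$ terms and you do not explain why their sum is controlled; and ``bounding the non-clean runs trivially using the $L^1$ control from \cref{lem:L1}'' cannot work, since an $L^1$ bound gives no information about a single pointwise value. The paper avoids all of this with a much more elementary move: if $|\mc{B}_j|\ge 4$ and $\mc{B}_j$ is an interval missing at most two points, then $\mc{B}_j$ contains two \emph{consecutive} integers $\ell,\ell+1$; the triangle inequality then gives
\[\Big|\sum_{z\in\mc{B}_j}e\big(z g^j\Theta\big)\Big|\le(|\mc{B}_j|-2)+\big|1+e(g^j\Theta)\big|,\]
so $|\mc{B}_j|^{-1}\big|\sum_{z\in\mc{B}_j}e(\cdot)\big|\le 1-(2-|1+e(g^j\Theta)|)/g$, and $2-|1+e(g^j\Theta)|\gg\snorm{g^j\Theta}^2\ge g^{-4}$ on the indices supplied by condition~(iii). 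Multiplying over $\gg k/\log d_2$ such $j$ gives $\exp(-\Omega(k/(g^5\log d_2)))$ directly. Because the savings comes from just two elements rather than from the whole interval, the removal of $E_j$ never enters the estimate at all --- it is already baked into ``the remaining $|\mc{B}_j|-2$ elements contribute trivially'' --- which is precisely what sidesteps the obstruction you identified. You should replace the telescoping/inclusion-exclusion scheme with this two-consecutive-elements argument.
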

\begin{proof}
We prove each of the parts in turn. For the first part, if $Q^2/d\le (\log N)^{O(C)}$, the resulting estimate is trivial and thus we may assume that $Q^2/d\ge (\log N)^{O(C)}$. Take $L = \lfloor \log(Q^2/d)/\log g\rfloor$; note that $L \le k/5$. We have from \cref{lem:large-sieve} that 
\[\sup_{\beta\in \mbf{R}}\sum_{\substack{b\sim Q\\d|b}}\sum_{\substack{0<a<b\\(a,b) = 1}}\bigg|\wh{\mu_{\vec{\mc{B}}}}\bigg(\frac{a}{b}+ \beta\bigg)\bigg|\ll \prod_{j>L}|\mc{B}_j| \cdot \bigg(\frac{Q^2}{d}\cdot (C_{\ref{lem:large-sieve}}\log g)^{L} + (C_{\ref{lem:large-sieve}}g\log g)^{L}\bigg).\]
Noting that at least $99/100$-fraction of the sets for $j\le L$ satisfy $|\mc{B}_j|\ge g^{99/100}$, the desired bound follows for $g$ sufficiently large. 

For the second part, we only assume the third condition in $C$-well conditioned. Via partitioning further we may therefore assume that $|\mc{B}_i| = 1$ for $i\notin [k/8,k/4)$. Given $x\sim \mu_{\mc{B}}$ we may write $x = \wt{x} + g^{k/8}x'$ where $x'\sim \mu_{\mc{B}'}$ where $\mc{B}' = (\mc{B}_{k/8},\ldots,\mc{B}_{k/4})$. It suffices to prove that 
\[\sup_{(a,d_2) = 1}\bigg|\wh{\mu_{\vec{\mc{B}'}}}\bigg(\frac{a}{d_2}\bigg)\bigg| \le \prod_{j=k/8}^{k/4}|\mc{B}_j| \cdot \exp\bigg(\frac{-ck}{g^5\log d_2}\bigg).\]
Note that if $|\mc{B}_j|\ge 4$ and is an interval missing at most two elements, $\mc{B}_j$ contains at least two consecutive elements. Thus via the triangle inequality, 
\[|\mc{B}_j|^{-1} \cdot \bigg|\sum_{x_j\in \mc{B}_j}e\bigg(\frac{ax_j\cdot g^{j}}{d_2}\bigg)\bigg|\le \frac{|\mc{B}_j|-2 + \big|1 + e\big(\frac{a\cdot g^{j}}{d_2}\big)\big|}{|\mc{B}_j|}\le 1-\frac{2-\big|1 + e\big(\frac{a\cdot g^{j}}{d_2}\big)\big|}{g}.\]
Note that for at least $k/(40\log d_2)$ indices,
\[\frac{2-\big|1 + e\big(\frac{a\cdot g^{j}}{d_2}\big)\big|}{g}\ge \frac{1}{g^{3}}.\]
Therefore,
\[\prod_{j=k/8}^{k/4}|\mc{B}_j|^{-1} \cdot \sup_{(a,d_2) = 1}\bigg|\wh{\mu_{\vec{\mc{B}'}}}\bigg(\frac{a}{d_2}\bigg)\bigg| = \prod_{j=k/8}^{k/4}\bigg(|\mc{B}_j|^{-1} \cdot \bigg|\sum_{x_j\in \mc{B}_j}e\bigg(\frac{ax_j\cdot g^{j}}{d_2}\bigg)\bigg|\bigg)\le \bigg(1-\frac{1}{g^3}\bigg)^{\Omega(k/\log d_2)}\]
as desired; noting that $g$ is sufficiently large gives the stated bound. 
\end{proof}

We now state the main lemma in our analysis. The deduction of \cref{lem:correction} then reduces to various probabilistic guarantees on the measure $\mu_{\vec{\mc{B}}}$ and divisibility properties of $x_1 + x_2$.
\begin{lemma}\label{lem:key-computation}
Fix $A,B\ge 1$. There exists $C = C_{\ref{lem:key-computation}}\ge 1$ and $C' = C_{\ref{lem:key-computation}}'(A,B,g)\ge g^2(A+B)$ such that the following holds. 

Let $g\ge C$. Furthermore, let $\chi_1$ and $\chi_2$ be primitive Dirichlet characters such that the corresponding zeros $\rho_i\in \Sigma_{A}$, $q_i$ are the corresponding moduli, and $\chi_2$ is not trivial.

Take $t_i\in \{0,\ldots,2(g-1)\}$ for $i\in [k]$ and define $T = \sum_{i=0}^{k-1}t_ig^{i}$. Suppose that $T$ satisfies:
\begin{align*}
\prod_{p\le (\log N)^{C'}} p^{v_p(T)} &\le \exp((\log\log N)^{5/2})\\
p|(q_1q_2,T)&\implies p\le (\log N)^{C'}\\
\sup_p v_p(T)&\le C'(\log\log N)\\
\log(q_2/(T,q_2))&\ge C'^2 \log\log N.
\end{align*}

Let $x_1\sim \mu_{\vec{\mc{B}}}$ with $\vec{\mc{B}}$ where $\mc{B}$ is $C'$ well-conditioned and $|\mc{B}_j| = 1$ for $j\in [k/2,k]$. Furthermore, suppose there exists $j_0\in [2k/3,3k/4]$ such that $\mc{B}_{j_0}\cap \{0,t_{j_0}-1,t_{j_0}\} = \emptyset$. 

Then 
\[\bigg|\sum_{\substack{x_1 + x_2 = T\\x_1\sim \mu_{\vec{\mc{B}}}}}\prod_{j=1}^{2}x_{j}^{\rho_j - 1}F_{\chi_j,Q}(x_j)\bigg|\ll_{A,B,g}(\log M)^{-B}\cdot \prod_{i\in [k]}|\mc{B}_i|.\]
\end{lemma}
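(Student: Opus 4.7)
The proof is Fourier-analytic. The plan is to expand each factor $F_{\chi_j, Q}(x_j)$ via \cref{eq:ApproximantError}, substitute $x_2 = T - x_1$, and exchange the order of summation. This rewrites the target sum as
\[\sum_{\substack{q_j \mid r_j\\ r_j/q_j \le Q}}\sum_{b_j \in (\mbf{Z}/r_j\mbf{Z})^{*}} c_{\chi_1}(b_1,r_1)\, c_{\chi_2}(b_2,r_2)\, e\bigg(\frac{b_2 T}{r_2}\bigg)\, I(b_1,r_1,b_2,r_2),\]
where
\[I = \sum_{x_1 \sim \mu_{\vec{\mc{B}}}} x_1^{\rho_1-1}(T-x_1)^{\rho_2-1}\, e\bigg(x_1 \bigg(\frac{b_1}{r_1} - \frac{b_2}{r_2}\bigg)\bigg).\]
Since $\rho_j \in \Xi_A$ has $|1 - \rho_j| = O(A \log\log M/\log M)$, the weight $x_1^{\rho_1-1}(T-x_1)^{\rho_2-1}$ has size $(\log M)^{O(A)}$ and is slowly varying on the scale of $T$; partial summation (Abel summation over a dyadic decomposition in $x_1$) reduces $I$ to a weighted average of $|\wh{\mu_{\vec{\mc{B}}}}(b_1/r_1 - b_2/r_2 + \beta)|$ over small $\beta$, at an acceptable $(\log M)^{O(A)}$ cost.

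Next let $\alpha = b_1/r_1 - b_2/r_2 = a/R$ in lowest terms, so $R \mid \on{lcm}(r_1,r_2)$. Decompose $R = R^{(g)} R^{*}$ with $R^{(g)} \mid g^{\infty}$ and $(R^{*}, g) = 1$, and split into three regimes. When $R^{*} > \exp((\log k)^{4})$, the large sieve bound \cref{lem:well-output}(1), applied to the summation over $(r_1,r_2,b_1,b_2)$ with the relevant divisibility constraints, provides the saving $(R^{*})^{-c}$ on average, so this case contributes negligibly. When $1 < R^{*} \le \exp((\log k)^{4})$, the $L^{\infty}$ decay $\exp(-\Omega(k/(g^{5}\log R^{*})))$ from \cref{lem:well-output}(2) beats the trivial $Q^{O(1)}$ count of admissible tuples and the $M^{o(1)}$ size of each $c_{\chi_j}(b_j, r_j)$. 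The main case is $R^{*} = 1$, in which $R \mid g^{\infty}$ and hence $r_1, r_2 \mid g^{\infty}$; in particular every prime factor of $q_1, q_2$ then divides $g$. Here Fourier decay is unavailable and cancellation must come from the characters alone.

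In this last regime, I would decompose the inner sum over $(b_1, b_2)$ prime by prime via the Chinese Remainder Theorem. At each prime $p \mid g$ one obtains a local sum which, after identifying the local shifts coming from $b_1/r_1$ versus $b_2/r_2$ (together with \cref{lem:frac} to count the pairs $(b_1,b_2)$ giving a fixed $\alpha$) and the local residue $T_p \equiv T \imod{p^{a_{2,p}}}$ with $a_{j,p} = v_p(r_j)$, is precisely the character sum $W(b_{1,p}, b_{2,p}, T_p)$ of \cref{lem:char-sum}. The hypotheses $\sup_p v_p(T) \le C' \log\log N$ and $p \mid (q_1 q_2, T) \Rightarrow p \le (\log N)^{C'}$ ensure the nondegeneracy condition $\alpha + 2 > 2(a_{1,p} - a_{2,p}) + v_p(T)$ of \cref{lem:char-sum} at every prime $p \mid q_2$, yielding a saving of $p^{-\Omega(v_p(q_2) - v_p(T))}$ per prime, while the remaining primes (those dividing $r_1 r_2$ but not $q_2$) contribute trivially after averaging. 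Taking the product over $p \mid q_2$ gives a total saving of at least $(q_2/(T,q_2))^{-c}$, which by the hypothesis $\log(q_2/(T,q_2)) \ge (C')^{2}\log\log N$ is $(\log M)^{-\Omega(C'^{2})}$, beating $(\log M)^{-B}$ once $C'$ is chosen large enough in terms of $B$. The main obstacle is the bookkeeping in this last case: matching the global fraction $\alpha$ to its prime-local pieces, pushing the bounds of \cref{lem:char-sum} through the CRT decomposition, and handling the interaction between $\chi_1$ and $\chi_2$ at primes with mismatched $p$-adic valuations of the two moduli. The digit condition $\mc{B}_{j_0} \cap \{0, t_{j_0}-1, t_{j_0}\} = \emptyset$ at some $j_0 \in [2k/3, 3k/4]$ enters at this stage to force $x_2 = T - x_1$ to avoid degenerate values at position $j_0$ which would otherwise spoil the required nondegeneracy of $\chi_2(x_2)$.
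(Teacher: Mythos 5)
Your high-level template matches the paper's: expand $F_{\chi_j,Q}$, peel off the $x^{\rho_j-1}$ weights, split the Fourier analysis into large-sieve / $L^\infty$-decay / character-sum regimes, and extract cancellation from \cref{lem:char-sum} in the last regime. But several of the steps as written do not go through.

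\textbf{The central error.} You assert that $R^*=1$ (i.e. the reduced denominator $R$ of $\alpha = b_1/r_1 - b_2/r_2$ divides $g^\infty$) implies $r_1, r_2 \mid g^\infty$ and hence that ``every prime factor of $q_1,q_2$ then divides $g$.'' This is a non sequitur: a large common prime $p\nmid g$ can divide both $r_1$ and $r_2$ and cancel entirely in $\alpha$ (e.g.\ $r_1=r_2=q_1=q_2$, $b_1=b_2$ gives $\alpha=0$). Since $q_j \mid r_j$, and $q_2$ is the conductor of a nontrivial character that in general has nothing to do with $g$, your ``main case'' scopes out the generic situation and then analyzes only a measure-zero slice of it. The paper never forms a single reduced fraction $\alpha$; instead it keeps $(b_1,r_1,b_2,r_2)$ separate and inserts an extra averaging variable $t \in \mbf{Z}/(\wh{q_1},\wh{q_2})\mbf{Z}$, using the identity $x_1+x_2=T$ to collapse the $t$-phase to $e(Tt/(\wh{q_1},\wh{q_2}))$. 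That averaging is what cleanly isolates the character sum $W$ as a multiplicative factor in front of $\wh{\mu_{\vec{\mc{B}}}}$.

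\textbf{Applying \cref{lem:char-sum}.} You propose to apply \cref{lem:char-sum} ``at every prime $p\mid q_2$,'' but the lemma requires both $\chi_{1,p}$ and $\chi_{2,p}$ to be primitive of prime-power modulus at $p$ and has the nondegeneracy constraint $\alpha + 2 > 2(a_1-a_2)+v_p(T)$, which fails whenever $v_p(q_1)=0$ or whenever $v_p(q_1)$ and $v_p(q_2)$ are badly unbalanced. The hypotheses you cite on $T$ do not repair this; they control $v_p(T)$, not the valuation mismatch. The paper handles this by factoring $q_i = \wh{q_i}\wt{q_i}q_i^*$ (valuations comparable; both positive but incomparable; one valuation zero), invoking \cref{lem:char-sum} only on $\wh{q}$, and extracting the remaining savings from the $\tau(\chi_i)/\phi(q_i)$ normalization together with \cref{lem:frac} and the large sieve, then treating small-modulus degenerate cases ($q_1=1$ or $q_1=q_2$ with $q_i$ polylog) in a final separate step. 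Without something like this decomposition, your argument has no mechanism to treat, say, $v_p(q_1)=0$, $v_p(q_2)>0$.

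\textbf{Two smaller misstatements.} (i) You write $|1-\rho_j| = O(A\log\log M/\log M)$, but only the real part of $1-\rho_j$ is that small; $|\Im \rho_j|$ can be as large as $Q = M^{\sigma_0}$. The weight $x^{\rho-1}$ is nevertheless essentially constant on the support of $\mu_{\vec{\mc{B}}}$ because $|\mc{B}_j|=1$ for $j\ge k/2$ forces the support into an interval of length $O(M^{1/2})$, over which the oscillation is $\ll Q M^{1/2}/M^{2/3} = M^{\sigma_0 - 1/6}$; this is exactly the paper's estimate and is where the $j_0$ condition enters (to guarantee $|x_1|,|x_2|\ge M^{2/3}$, so the derivative bound applies). (ii) Consequently, the $j_0$ condition is \emph{not} used for nondegeneracy of $\chi_2(x_2)$ as you suggest; by that point the characters have already been summed out and replaced by the Ramanujan-type coefficients $c_{\chi_j}(b_j,r_j)$.

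In short: the skeleton is right, but the passage from the Fourier expansion to the local character sums cannot be organized around a single reduced fraction $\alpha$, and the primes where \cref{lem:char-sum} actually bites must be isolated from those where it does not. Both of these require the paper's $t$-averaging and the $\wh{q},\wt{q},q^*$ decomposition (or equivalents), neither of which is present in the proposal.
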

\begin{proof}
As there exists $j_0 \in [2k/3,3k/4]$ such that $\mc{B}_{j_0}\cap \{0,t_{j_0}-1,t_{j_0}\} = \emptyset$, we deterministically have that $|x_1|,|x_2|\ge M^{2/3}$ for all choices in the support of the sum. Furthermore as $|\mc{B}_i| = 1$ for $i\ge k/2$, there exists integers $\wt{x_i}$ such that $|x_i - \wt{x_i}|\ll_{g} M^{1/2}$ for all choices of $x_i$ in the support of the sum. Since $\sigma_0 = 10^{-10}$, this implies that 
\[\bigg|\prod_{j=1}^{2}x_{j}^{\rho_j - 1} - \prod_{j=1}^{2}\wt{x_{j}}^{\rho_j - 1}\bigg|\ll \frac{|\mf{Im}(\rho_1)| + |\mf{Im}(\rho_2)|}{M^{1/6 + o(1)}}\ll M^{-1/8}.\]

Recall that $F_{\chi_j,Q}$ is defined in \cref{eq:ApproximantError}. As $|F_{\chi_j,Q}(x_j)|\le M^{o(1)}$ by \cref{lem:upper} and the divisor bound, it suffices to prove that 
\[\prod_{j=1}^{2}\big|\wt{x_{j}}^{\rho_j - 1}\big|\cdot \bigg|\sum_{\substack{x_1 + x_2 = T\\x_1\sim \mu_{\vec{\mc{B}}}}}\prod_{j=1}^{2}F_{\chi_j,Q}(x_j)\bigg|\ll_{A,B}(\log M)^{-B}\cdot \prod_{i\in [k]}|\mc{B}_i|.\]

Noting that $\prod_{j=1}^{2}\big|\wt{x_{j}}^{\rho_j - 1}\big|\le 1$, it suffices to prove 
\[\bigg|\sum_{\substack{x_1 + x_2 = T\\x_1\sim \mu_{\vec{\mc{B}}}}}\prod_{j=1}^{2}F_{\chi_j,Q}(x_j)\bigg|\ll_{A,B}(\log M)^{-B}\cdot \prod_{i\in [k]}|\mc{B}_i|.\]
\textbf{Step 1: Expanding the sum and extracting \cref{lem:char-sum}.}

We separate prime factors of $q_i$ into a series of classes. Define
\begin{align*}
q_i &:= \prod_{v_p(q_i)/v_p(q_{3-i})\in [9/10,10/9]}p^{v_p(q_i)}\prod_{\substack{v_p(q_i)/v_p(q_{3-i})\notin [9/10,10/9]\\\min(v_p(q_1),v_p(q_2))>0}}p^{v_p(q_i)}\prod_{v_p(q_i)>v_p(q_{3-i}) = 0}p^{v_p(q_{i})}:= \wh{q_i}\wt{q_i}q_i^{\ast}.
\end{align*}
Furthermore, define 
\[\wh{q} = \on{lcm}(\wh{q_1},\wh{q_2})~\text{ and }\wt{q} = \on{lcm}(\wt{q_1},\wt{q_2}).\]
Notice that
\begin{align*}
&\sum_{\substack{x_1 + x_2 = T\\x_1\sim \mu_{\vec{\mc{B}}}}}\prod_{j=1}^{2}F_{\chi_j,Q}(x_j) = \sum_{\substack{x_1 + x_2 = T\\x_1\sim \mu_{\vec{\mc{B}}}}}\prod_{i=1}^{2}\ol{\tau(\chi_i)}\sum_{r_1,r_2\le Q}\frac{\prod_{i=1}^{2}\mu(r_i)\ol{\chi_i(r_i)}}{\prod_{i=1}^{2}\phi(q_ir_i)}\sum_{\substack{b_i\in \mbf{Z}/(q_ir_i\mbf{Z})\\(b_i,r_i) = 1}}\prod_{i=1}^{2}\chi_i(b_i)e\bigg(\sum_{i=1}^{2}\frac{b_ix_i}{q_ir_i}\bigg)\\
&= \sum_{\substack{x_1 + x_2 = T\\x_1\sim \mu_{\vec{\mc{B}}}}}\prod_{i=1}^{2}\ol{\tau(\chi_i)}\sum_{\substack{r_1,r_2\le Q}}\frac{\prod_{i=1}^{2}\mu(r_i)\ol{\chi_i(r_i)}}{\prod_{i=1}^{2}\phi(q_ir_i)} \\
&\qquad\qquad\qquad\qquad\qquad\cdot \bigg((\wh{q_1},\wh{q_2})^{-1}\sum_{\substack{b_i\in \mbf{Z}/(q_ir_i\mbf{Z})\\t\in \mbf{Z}/(\wh{q_1},\wh{q_2})\mbf{Z}\\(b_i,r_i) = 1}}\prod_{i=1}^{2}\chi_i\bigg(b_i + \frac{tq_ir_i}{(\wh{q_1},\wh{q_2})}\bigg)e\bigg(\sum_{i=1}^{2}\frac{b_ix_i}{q_ir_i}+\sum_{i=1}^{2}\frac{x_it}{(\wh{q_1},\wh{q_2})}\bigg)\bigg) \\
&= \sum_{\substack{x_1 + x_2 = T\\x_1\sim \mu_{\vec{\mc{B}}}}}\prod_{i=1}^{2}\ol{\tau(\chi_i)}\sum_{\substack{r_1,r_2\le Q}}\frac{\prod_{i=1}^{2}\mu(r_i)\ol{\chi_i(r_i)}}{\prod_{i=1}^{2}\phi(q_ir_i)} \\
&\qquad\qquad\qquad\qquad\qquad\cdot \bigg((\wh{q_1},\wh{q_2})^{-1}\sum_{\substack{b_i\in \mbf{Z}/(q_ir_i\mbf{Z})\\t\in \mbf{Z}/(\wh{q_1},\wh{q_2})\mbf{Z}\\(b_i,r_i) = 1}}\prod_{i=1}^{2}\chi_i\bigg(b_i + \frac{tq_ir_i}{(\wh{q_1},\wh{q_2})}\bigg)e\bigg(\sum_{i=1}^{2}\frac{b_ix_i}{q_ir_i}\bigg)e\bigg(\frac{Tt}{(\wh{q_1},\wh{q_2})}\bigg)\bigg) \\
&= \prod_{i=1}^{2}\ol{\tau(\chi_i)}\sum_{\substack{r_1,r_2\le Q\\(r_i,q_i) =1}}\frac{\prod_{i=1}^{2}\mu(r_i)\ol{\chi(r_i)}}{\prod_{i=1}^{2}\phi(q_i)\phi(r_i)}\sum_{\substack{b_i\in \mbf{Z}/(q_ir_i\mbf{Z})\\(b_i,r_i) = 1}}\bigg(\sum_{\substack{x_1 + x_2 = T\\x_1\sim \mu_{\vec{\mc{B}}}}}e\bigg(\sum_{i=1}^{2}\frac{b_ix_i}{q_ir_i} \bigg)\bigg)\\
&\qquad\qquad\qquad\qquad\qquad\cdot \bigg((\wh{q_1},\wh{q_2})^{-1}\sum_{t\in \mbf{Z}/(\wh{q_1},\wh{q_2})\mbf{Z}}\prod_{i=1}^{2}\chi_i\bigg(b_i + \frac{tq_ir_i}{(\wh{q_1},\wh{q_2})}\bigg)e\bigg(\frac{Tt}{(\wh{q_1},\wh{q_2})}\bigg)\bigg).
\end{align*}
The key point is that the final sum is exactly of the form in \cref{lem:char-sum} (after factoring into different prime factors). Furthermore, note that the internal sum is $0$ unless $(b_i,\wt{q_i}q_i^{\ast}) = 1$ as otherwise the argument in $\chi_i$ vanishes over the whole sum. 

\textbf{Step 2: Bounding inner sums via \cref{lem:char-sum}.}

We now apply absolute values brutally and estimate the desired sum.
We may write 
\[\chi_i(t) = \prod_{p|q_i}\chi_{i,p}(t)\]
where $\chi_{i,p}$ is a character modulo $p^{v_p(q_i)}$. Using \cite[Lemma~C.1]{Gre22} we have that each $\chi_{i,p}$ is primitive and using the Chinese remainder theorem, we may obtain a factorization of the character sum of 
\[\bigg((\wh{q_1},\wh{q_2})^{-1}\sum_{t\in \mbf{Z}/(\wh{q_1},\wh{q_2})\mbf{Z}}\prod_{i=1}^{2}\chi_i\bigg(b_i + \frac{tq_ir_i}{(\wh{q_1},\wh{q_2})}\bigg)e\bigg(\frac{Tt}{(\wh{q_1},\wh{q_2})}\bigg)\bigg)\]
based on the prime factors of $(\wh{q_1},\wh{q_2})$ (see e.g., \cite[(12.21)]{IK04}). Then applying \cref{lem:char-sum}, we have that 
\begin{align*}
\sup_{b_1,b_2}&\bigg|(\wh{q_1},\wh{q_2})^{-1}\sum_{t\in \mbf{Z}/(\wh{q_1},\wh{q_2})\mbf{Z}}\prod_{i=1}^{2}\chi_i\bigg(b_i + \frac{tq_ir_i}{(\wh{q_1},\wh{q_2})}\bigg)e\bigg(\frac{Tt}{(\wh{q_1},\wh{q_2})}\bigg)\bigg|\\
&\ll \prod_{\substack{p|(\wh{q_1},\wh{q_2})\\v_p((\wh{q_1},\wh{q_2}))=1}}\min\big(1, 4p^{v_p(T)} \cdot p^{-1/4}\big)\cdot \prod_{\substack{p|(\wh{q_1},\wh{q_2})\\v_p((\wh{q_1},\wh{q_2}))\ge 2}}\min\big(1, 32p^{v_p(T)} \cdot p^{-\lceil \lfloor v_p(\wh{q})/2\rfloor/2 \rceil + v_p(\wh{q}) - v_p((\wh{q_1},\wh{q_2}))}\big)\\
&\ll_{g} \min\bigg(\frac{\exp(O((\log\log N)^{5/2}))}{(\wh{q_1},\wh{q_2})^{1/8}},\frac{(\log N)^{gC'}}{\prod_{p|g}p^{v_p((\wh{q_1},\wh{q_2}))/8}}, 1\bigg)\\
&\ll_{g} \min\bigg(\frac{\exp(O((\log\log N)^{5/2}))}{\wh{q}^{1/9}}, \frac{(\log N)^{gC'}}{\prod_{p|g}p^{v_p((\wh{q_1},\wh{q_2}))/8}}, 1\bigg)\\
&:=W.
\end{align*}
Here we are using that 
\[\prod_{\substack{p\le (\log N)^{C'}}}p^{v_p(T)}\le \exp((\log\log N)^{5/2})\]
and also that $v_p(T) \le C'\log\log N$.

\textbf{Step 3: Applying \cref{lem:frac}}
For the sake of notational simplicity, let 
\[F(\Theta) = \Big|\sum_{x_1\sim \mu_{\vec{\mc{B}}}}e(\Theta x_1)\Big| \cdot \prod_{j}|\mc{B}_j|^{-1}.\]

Since $|\tau(\chi_i)| \le q_i^{1/2}$, it suffices after negating $b_2$ to estimate 
\begin{align*}
&W\sum_{\substack{r_1,r_2\le Q\\(r_i,q_i) =1\\\mu(r_1),\mu(r_2) \neq 0}}\frac{(q_1q_2)^{1/2}}{\prod_{i=1}^{2}\phi(q_i)\phi(r_i)}\sum_{\substack{b_i\in \mbf{Z}/(q_ir_i\mbf{Z})\\(b_i,r_i) = (b_i,\wt{q_i}q_i^{\ast})= 1}}F\bigg(\frac{b_1}{q_1r_1}+\frac{b_2}{q_2r_2}\bigg)\\
&\ll W(\log\log N)^{O(1)}\sum_{\substack{r_1,r_2\le Q\\(r_i,q_i) =1\\\mu(r_1) , \mu(r_2) \neq 0}}\frac{1}{(q_1q_2)^{1/2}(r_1r_2)}\sum_{\substack{b_i\in \mbf{Z}/(q_ir_i\mbf{Z})\\(b_i,r_i) = (b_i,\wt{q_i}q_i^{\ast})= 1}}F\bigg(\frac{b_1}{q_1r_1}+\frac{b_2}{q_2r_2}\bigg)
\end{align*}
and prove it is $\ll_{B}(\log N)^{-B}$. We now write $(r_1,r_2) = r$ and have $r_1 = rr_1^{\ast}$ and $r_2 = rr_2^{\ast}$. As $\mu(r_1) = \mu(r_2)\neq 0$, we have that $(r,r_i^{\ast}) = 1$. The above is bounded by
\begin{align*}
W(\log\log N)^{O(1)}\sum_{\substack{r,r_1^{\ast},r_2^{\ast}\le Q\\(rr_i^{\ast},q_i) =1\\\mu(r_1^\ast), \mu(r_2^{\ast}),\mu(r) \neq 0 \\ (r,r_i^{\ast}) = (r_1^{\ast},r_2^{\ast}) = 1}}\frac{1}{(q_1q_2)^{1/2}(r^2r_1^{\ast}r_2^{\ast})}\sum_{\substack{b_i\in \mbf{Z}/(q_irr_i^{\ast}\mbf{Z})\\(b_i,rr_i^{\ast}) = (b_i,\wt{q_i}q_i^{\ast})= 1}}F\bigg(\frac{b_1}{q_1rr_1^{\ast}}+\frac{b_2}{q_2rr_2^{\ast}}\bigg).
\end{align*}
We next use that $(rr_i^{\ast},q_i) = 1$ in order to split the final sum. The above may be written as 
\begin{align*}
W(\log\log N)^{O(1)}\sum_{\substack{r,r_1^{\ast},r_2^{\ast}\le Q\\(rr_i^{\ast},q_i) =1\\\mu(r_1^\ast), \mu(r_2^{\ast}),\mu(r) \neq 0\\ (r,r_i^{\ast}) = (r_1^{\ast},r_2^{\ast}) = 1}}\frac{1}{(q_1q_2)^{1/2}(r^2r_1^{\ast}r_2^{\ast})}\sum_{\substack{b_i\in \mbf{Z}/q_i\mbf{Z}\\ \wt{b_i}\in \mbf{Z}/rr_i^{\ast}\mbf{Z}\\(b_i,\wt{q_i}q_i^{\ast}) =1\\(\wt{b_i},rr_i^{\ast}) = 1}}F\bigg(\sum_{i=1}^{2}\frac{b_i}{q_i} + \frac{\wt{b_i}}{rr_i^{\ast}}\bigg).
\end{align*}
Via \cref{lem:frac}, we may combine the $\sum_{i=1}^{2}b_i/q_i$; note that the conditions apply separately for each prime and thus the above is bounded by
\begin{align*}
W(\log\log N)^{O(1)}\sum_{\substack{r,r_1^{\ast},r_2^{\ast}\le Q\\(rr_i^{\ast},q_i) =1\\\mu(r_1^\ast), \mu(r_2^{\ast}),\mu(r) \neq 0\\ (r,r_i^{\ast}) = (r_1^{\ast},r_2^{\ast}) = 1}}\frac{(q_1,q_2)}{(q_1q_2)^{1/2}(r^2r_1^{\ast}r_2^{\ast})}\sum_{\substack{b\in \mbf{Z}/\wh{q}\wt{q}q_1^{\ast}q_2^{\ast}\mbf{Z}\\ \wt{b_i}\in \mbf{Z}/rr_i^{\ast}\mbf{Z}\\(b,\wt{q}q_1^{\ast}q_2^{\ast}) =1\\(\wt{b_i},rr_i^{\ast}) = 1}}F\bigg(\frac{b}{\wh{q}\wt{q}q_1^{\ast}q_2^{\ast}} + \sum_{i=1}^{2} \frac{\wt{b_i}}{rr_i^{\ast}}\bigg).
\end{align*}
Note that 
\[(q_1,q_2)\le (\wh{q_1}\wh{q_2})^{1/2}\cdot (\wt{q_1}\wt{q_2})^{1/2-1/38}\]
and thus the above is bounded by 
\begin{align*}
W(\log\log N)^{O(1)}\sum_{\substack{r,r_1^{\ast},r_2^{\ast}\le Q\\(rr_i^{\ast},q_i) =1\\\mu(r_1^\ast), \mu(r_2^{\ast}),\mu(r) \neq 0\\ (r,r_i^{\ast}) = (r_1^{\ast},r_2^{\ast}) = 1}}\frac{1}{(\wt{q}_1\wt{q}_2)^{1/38}(q_1^{\ast}q_2^{\ast})^{1/2}(r^2r_1^{\ast}r_2^{\ast})}\sum_{\substack{b\in \mbf{Z}/\wh{q}\wt{q}q_1^{\ast}q_2^{\ast}\mbf{Z}\\ \wt{b_i}\in \mbf{Z}/rr_i^{\ast}\mbf{Z}\\(b,\wt{q}q_1^{\ast}q_2^{\ast}) =1\\(\wt{b_i},rr_i^{\ast}) = 1}}F\bigg(\frac{b}{\wh{q}\wt{q}q_1^{\ast}q_2^{\ast}} + \sum_{i=1}^{2} \frac{\wt{b_i}}{rr_i^{\ast}}\bigg).
\end{align*}
We next reduce the fraction with numerator $b$ by choosing $\wh{q}' = (b,\wh{q})|\wh{q}$l; the above is thus bounded by 
\begin{align*}
W(\log\log N)^{O(1)}\sum_{\substack{r,r_1^{\ast},r_2^{\ast}\le Q\\(rr_i^{\ast},q_i) =1\\\mu(r_1^\ast), \mu(r_2^{\ast}),\mu(r) \neq 0\\ (r,r_i^{\ast}) = (r_1^{\ast},r_2^{\ast}) = 1\\\wh{q}'|\wh{q}}}\frac{1}{(\wt{q}_1\wt{q}_2)^{1/38}(q_1^{\ast}q_2^{\ast})^{1/2}(r^2r_1^{\ast}r_2^{\ast})}\sum_{\substack{b\in \mbf{Z}/\wh{q}'\wt{q}q_1^{\ast}q_2^{\ast}\mbf{Z}\\ \wt{b_i}\in \mbf{Z}/rr_i^{\ast}\mbf{Z}\\(b,\wh{q}'\wt{q}q_1^{\ast}q_2^{\ast}) =1\\(\wt{b_i},rr_i^{\ast}) = 1}}F\bigg(\frac{b}{\wh{q}'\wt{q}q_1^{\ast}q_2^{\ast}} + \sum_{i=1}^{2} \frac{\wt{b_i}}{rr_i^{\ast}}\bigg).
\end{align*}
We next reduce the fractions with denominators $rr_i^{\ast}$. As $(r_1^{\ast},r_2^{\ast}) = (r,r_i^{\ast}) = 1$ we may apply \cref{lem:frac} and obtain that the above is bounded by
\begin{align*}
&W(\log\log N)^{O(1)}\sum_{\substack{r,r_1^{\ast},r_2^{\ast}\le Q\\(rr_i^{\ast},q_i) =1\\\mu(r_1^\ast), \mu(r_2^{\ast}),\mu(r) \neq 0\\ (r,r_i^{\ast}) = (r_1^{\ast},r_2^{\ast}) = 1\\\wh{q}'|\wh{q}}}\frac{r}{(\wt{q}_1\wt{q}_2)^{1/38}(q_1^{\ast}q_2^{\ast})^{1/2}(r^2r_1^{\ast}r_2^{\ast})}\sum_{\substack{b\in \mbf{Z}/\wh{q}'\wt{q}q_1^{\ast}q_2^{\ast}\mbf{Z}\\ \wt{b}\in \mbf{Z}/rr_1^{\ast}r_2^{\ast}\mbf{Z}\\(b,\wh{q}'\wt{q}q_1^{\ast}q_2^{\ast}) =1\\(\wt{b},r_1^{\ast}r_2^{\ast}) = 1}}F\bigg(\frac{b}{\wh{q}'\wt{q}q_1^{\ast}q_2^{\ast}} + \frac{\wt{b}}{rr_1^{\ast}r_2^{\ast}}\bigg)\\
&= W(\log\log N)^{O(1)}\sum_{\substack{r,r_1^{\ast},r_2^{\ast}\le Q\\(rr_i^{\ast},q_i) =1\\\mu(r_1^\ast), \mu(r_2^{\ast}),\mu(r) \neq 0\\ (r,r_i^{\ast}) = (r_1^{\ast},r_2^{\ast}) = 1\\\wh{q}'|\wh{q}}}\frac{1}{(\wt{q}_1\wt{q}_2)^{1/38}(q_1^{\ast}q_2^{\ast})^{1/2}(rr_1^{\ast}r_2^{\ast})}\sum_{\substack{b\in \mbf{Z}/\wh{q}'\wt{q}q_1^{\ast}q_2^{\ast}\mbf{Z}\\ \wt{b}\in \mbf{Z}/rr_1^{\ast}r_2^{\ast}\mbf{Z}\\(b,\wh{q}'\wt{q}q_1^{\ast}q_2^{\ast}) =1\\(\wt{b},r_1^{\ast}r_2^{\ast}) = 1}}F\bigg(\frac{b}{\wh{q}'\wt{q}q_1^{\ast}q_2^{\ast}} + \frac{\wt{b}}{rr_1^{\ast}r_2^{\ast}}\bigg).
\end{align*}
We now choose $r' = (\wt{b},r)|r$ and note that 
\[\sum_{\substack{r'|r\\r\le Q}}\frac{1}{r} \ll \frac{\log N}{r'}.\]
Therefore, the above is bounded by 
\begin{align*}
W(\log N)^{1+o(1)}\sum_{\substack{r,r_1^{\ast},r_2^{\ast}\le Q\\(rr_i^{\ast},q_i) =1\\\mu(r_1^\ast), \mu(r_2^{\ast}),\mu(r) \neq 0\\ (r,r_i^{\ast}) = (r_1^{\ast},r_2^{\ast}) = 1\\\wh{q}'|\wh{q}}}\frac{1}{(\wt{q}_1\wt{q}_2)^{1/38}(q_1^{\ast}q_2^{\ast})^{1/2}(rr_1^{\ast}r_2^{\ast})}\sum_{\substack{b\in \mbf{Z}/\wh{q}'\wt{q}q_1^{\ast}q_2^{\ast}\mbf{Z}\\ \wt{b}\in \mbf{Z}/rr_1^{\ast}r_2^{\ast}\mbf{Z}\\(b,\wh{q}'\wt{q}q_1^{\ast}q_2^{\ast}) =1\\(\wt{b},rr_1^{\ast}r_2^{\ast}) = 1}}F\bigg(\frac{b}{\wh{q}'\wt{q}q_1^{\ast}q_2^{\ast}} + \frac{\wt{b}}{rr_1^{\ast}r_2^{\ast}}\bigg).
\end{align*}
We now let $(r_i^{\ast},q_{3-i}^{\ast}) = q_i'|q_{3 - i}^{\ast}$ and write $r_i^{\ast} = q_i' r_i'$. As there are at most $\tau(q_1^{\ast}q_2^{\ast}) = (q_1^{\ast}q_2^{\ast})^{o(1)}$ such possibilities, the above is bounded by 
\begin{align*}
\sup_{q_i'|q_i^{\ast}}W(\log N)^{1+o(1)}\sum_{\substack{r,r_1',r_2'\le Q\\(rr_1'r_2',q_1q_2) =1\\\mu(r) , \mu(r_i') \neq 0\\ (r,r_i') = (r_1',r_2') = 1\\\wh{q}'|\wh{q}}}\frac{1}{(\wt{q}_1\wt{q}_2)^{1/38}(q_1^{\ast}q_2^{\ast})^{1/2-o(1)}(rq
_1'q_2'r_1'r_2')}\sum_{\substack{b\in \mbf{Z}/\wh{q}'\wt{q}q_1^{\ast}q_2^{\ast}\mbf{Z}\\ \wt{b}\in \mbf{Z}/rq_1'q_2'r_1'r_2'\mbf{Z}\\(b,\wh{q}'\wt{q}q_1^{\ast}q_2^{\ast}) =1\\(\wt{b},rr_1^{\ast}r_2^{\ast}) = 1}}F\bigg(\frac{b}{\wh{q}'\wt{q}q_1^{\ast}q_2^{\ast}} + \frac{\wt{b}}{rq_1'q_2'r_1'r_2'}\bigg).
\end{align*}
We now apply \cref{lem:frac} to obtain that the above is bounded by 
\begin{align*}
&\sup_{q_i'|q_i^{\ast}}W(\log N)^{1+o(1)}\sum_{\substack{r,r_1',r_2'\le Q\\(rr_1'r_2',q_1q_2) =1\\\mu(r) , \mu(r_i') \neq 0\\ (r,r_i') = (r_1',r_2') = 1\\\wh{q}'|\wh{q}}}\frac{1}{(\wt{q}_1\wt{q}_2)^{1/38}(q_1^{\ast}q_2^{\ast})^{1/2-o(1)}(rr
_1'r_2')}\sum_{\substack{b\in \mbf{Z}/\wh{q}'\wt{q}q_1^{\ast}q_2^{\ast}rr_1'r_2'\mbf{Z}\\(b,\wh{q}'\wt{q}rr_1'r_2') =1}}F\bigg(\frac{b}{\wh{q}'\wt{q}q_1^{\ast}q_2^{\ast}rr_1'r_2'}\bigg)\\
&\ll W(\log N)^{1+o(1)}\sum_{\substack{r,r_1',r_2'\le Q\\(rr_1'r_2',q_1q_2) =1\\\mu(r) , \mu(r_i') \neq 0\\ (r,r_i') = (r_1',r_2') = 1\\\wh{q}'|\wh{q}}}\frac{1}{(\wt{q}_1\wt{q}_2)^{1/38}(q_1^{\ast}q_2^{\ast})^{1/2-o(1)}(rr
_1'r_2')}\sum_{\substack{b\in \mbf{Z}/\wh{q}'\wt{q}q_1^{\ast}q_2^{\ast}rr_1'r_2'\mbf{Z}\\(b,\wh{q}'\wt{q}rr_1'r_2') =1}}F\bigg(\frac{b}{\wh{q}'\wt{q}q_1^{\ast}q_2^{\ast}rr_1'r_2'}\bigg)\\
&\ll \sup_{q'|q_1^{\ast}q_2^{\ast}} W(\log N)^{1+o(1)}\sum_{\substack{r,r_1',r_2'\le Q\\(rr_1'r_2',q_1q_2) =1\\\mu(r) , \mu(r_i') \neq 0\\ (r,r_i') = (r_1',r_2') = 1\\\wh{q}'|\wh{q}}}\frac{1}{(\wt{q}_1\wt{q}_2)^{1/38}(q_1^{\ast}q_2^{\ast})^{1/2-o(1)}(rr
_1'r_2')}\sum_{\substack{b\in \mbf{Z}/\wh{q}'\wt{q}q'rr_1'r_2'\mbf{Z}\\(b,\wh{q}'\wt{q}q'rr_1'r_2') =1}}F\bigg(\frac{b}{\wh{q}'\wt{q}q'rr_1'r_2'}\bigg).
\end{align*}
Using the divisor bound and that $(r,r_i') = (r_1',r_2') = 1$, we may combine $r$, $r_1'$, and $r_2'$ into a single variable and bound the above by 
\begin{equation}\label{eq:reduc-final}
\sup_{q'|q_1^{\ast}q_2^{\ast}} W(\log N)^{1+o(1)}\sum_{\substack{r\le Q^3\\(r,q_1q_2) =1\\\mu(r) \neq 0\\\wh{q}'|\wh{q}}}\frac{1}{(\wt{q}_1\wt{q}_2)^{1/38}(q_1^{\ast}q_2^{\ast})^{1/2-o(1)}r^{1-o(1)}}\sum_{\substack{b\in \mbf{Z}/\wh{q}'\wt{q}q'r\mbf{Z}\\(b,\wh{q}'\wt{q}q'r) =1}}F\bigg(\frac{b}{\wh{q}'\wt{q}q'r}\bigg).
\end{equation}

\textbf{Step 4: Applying large--sieve estimates}

We are now in position to invoke \cref{lem:well-output}. Recall that 
\[W\le \frac{\exp(O((\log\log N)^{5/2}))}{\wh{q}^{1/9}}.\]
Therefore, the sum which we seek to bound is bounded by 
\begin{align*}
\sup_{q'|q_1^{\ast}q_2^{\ast}} \exp(O((\log\log N)^{5/2}))\sum_{\substack{r\le Q^3\\(r,q_1q_2) =1\\\mu(r) \neq 0\\\wh{q}'|\wh{q}}}\frac{1}{\wh{q}^{1/9}(\wt{q}_1\wt{q}_2)^{1/38}(q_1^{\ast}q_2^{\ast})^{1/2-o(1)}r^{1-o(1)}}\sum_{\substack{b\in \mbf{Z}/\wh{q}'\wt{q}q'r\mbf{Z}\\(b,\wh{q}'\wt{q}q'r) =1}}F\bigg(\frac{b}{\wh{q}'\wt{q}q'r}\bigg).
\end{align*}
Let $r\sim R$ and $\hat{q}'\sim L$; we have 
\begin{align*}
&\sup_{q'|q_1^{\ast}q_2^{\ast}}\sum_{\substack{r\sim R\\(r,q_1q_2) =1\\\mu(r) \neq 0\\\wh{q}'|\wh{q},\wh{q}'\sim L}}\frac{\exp(O((\log\log N)^{5/2}))}{\wh{q}^{1/9}(\wt{q}_1\wt{q}_2)^{1/38}(q_1^{\ast}q_2^{\ast})^{1/2-o(1)}r^{1-o(1)}}\sum_{\substack{b\in \mbf{Z}/\wh{q}'\wt{q}q'r\mbf{Z}\\(b,\wh{q}'\wt{q}q'r) =1}}F\bigg(\frac{b}{\wh{q}'\wt{q}q'r}\bigg)\\
&\ll \sup_{q'|q_1^{\ast}q_2^{\ast}}\frac{\exp(O((\log\log N)^{5/2}))}{(\wt{q}_1\wt{q}_2)^{1/38}(q_1^{\ast}q_2^{\ast})^{1/2-o(1)}\hat{q}^{1/18}L^{1/18-o(1)}R^{1-o(1)}}\sum_{\substack{b\in \mbf{Z}/(\wt{q}q'T\mbf{Z})\\(b,\wt{q}q'T) = 1\\LR/4 \le T \le 4LR}}F\bigg(\frac{b}{\wt{q}q'T}\bigg)\\
&\ll \sup_{q'|q_1^{\ast}q_2^{\ast}}\frac{\exp(O((\log\log N)^{5/2}))}{(\wt{q}_1\wt{q}_2)^{1/38}(q_1^{\ast}q_2^{\ast})^{1/2-o(1)}\hat{q}^{1/18}L^{1/18-o(1)}R^{1-o(1)}}\big((LR)^2 \cdot \wt{q}q'\big)^{1/50}
\end{align*}
where we have applied the large sieve portion of \cref{lem:well-output}. As $\wt{q}\le \wt{q}_1\wt{q}_2$ and $q'\le (q_1^{\ast}q_2^{\ast})$, the above is bounded by (say)
\[\frac{\exp((\log\log N)^{O(1)})}{(q_1q_2)^{1/200}}.\]
The lemma follows unless both of the moduli are less than $\exp((\log\log N)^{O(1)})$. By \cref{lem:large-conduc} this forces either $q_1 = q_2$ or $q_i = 1$ for one of $i\in \{1,2\}$.

\textbf{Step 5: Handling degenerate cases}

We first handle the case where $q_1 = 1$ and $q_2\neq 1$. In this case we have that $\hat{q_1} = \hat{q_2} = \wt{q}_1 = \wt{q}_2 = q_1^{\ast} = 1$ and $q_2^{\ast} = q_2$. Thus, returning to \cref{eq:reduc-final}, we are seeking to bound
\begin{align*}
\sup_{q'|q_2} W(\log N)^{1+o(1)}\sum_{\substack{r\le Q^3\\(r,q_2) =1\\\mu(r) \neq 0}}\frac{1}{(q_2)^{1/2-o(1)}r^{1-o(1)}}\sum_{\substack{b\in \mbf{Z}/q'r\mbf{Z}\\(b,q'r) =1}}F\bigg(\frac{b}{q'r}\bigg).
\end{align*}
Noting that $W\le 1$, taking $r\sim R$, and applying \cref{lem:well-output},
\begin{align*}
&\sup_{q'|q_2} (\log N)^{1+o(1)}\sum_{\substack{r\sim R\\(r,q_2) =1\\\mu(r) \neq 0}}\frac{1}{(q_2)^{1/2-o(1)}r^{1-o(1)}}\sum_{\substack{b\in \mbf{Z}/q'r\mbf{Z}\\(b,q'r) =1}}F\bigg(\frac{b}{q'r}\bigg)\\
&\ll \sup_{q'|q_2} (\log N)^{1+o(1)}\frac{1}{(q_2)^{1/2-o(1)}R^{1-o(1)}}\cdot (R^2q')^{1/50}\ll (\log N)^{1+o(1)}q_2^{-1/4 + o(1)}R^{-1/2}.
\end{align*}
Summing over dyadic pieces and using that $q_2\gg_{B} (\log N)^{B}$ by \cref{lem:large-conduc} concludes this exceptional case. 

Finally, we handle the case $q_1 = q_2 = q$ and we may assume that $q\le \exp(O((\log\log N)^{5/2}))$ as otherwise the argument from the previous step suffices. In this case, $\wh{q_1} = \wh{q_2} = q_i$ and $\wt{q}_1 = \wt{q}_2 = q_1^{\ast} = q_2^{\ast} = 1$. Thus, our bound becomes 
\begin{align*}
 W(\log N)^{1+o(1)}\sum_{\substack{r\le Q^3\\(r,q_1) =1\\\mu(r) \neq 0\\\wh{q}'|q_1}}\frac{1}{r^{1-o(1)}}\sum_{\substack{b\in \mbf{Z}/\wh{q}'r\mbf{Z}\\(b,\wh{q}'r) =1}}F\bigg(\frac{b}{\wh{q}'r}\bigg).
\end{align*}
Using that $W\le 1$, $q' \le \exp(O(\log\log N))^{5/2})$, and \cref{lem:well-output}, the contribution is negligible unless $r\le \exp(O((\log\log N)^{5/2}))$; therefore it suffices to bound 
\begin{align*}
 W(\log N)^{1+o(1)}\sum_{\substack{r\le \exp(O((\log\log N)^{5/2}))\\(r,q_1) =1\\\mu(r) \neq 0\\\wh{q}'|q_1}}\frac{1}{r^{1-o(1)}}\sum_{\substack{b\in \mbf{Z}/\wh{q}'r\mbf{Z}\\(b,\wh{q}'r) =1}}F\bigg(\frac{b}{\wh{q}'r}\bigg).
\end{align*}
In this case note that $\wh{q}'r\le \exp(O((\log\log N)^{5/2}))$. Note via the $L^\infty$ estimate of \cref{lem:well-output}, we have that $F\big(\frac{b}{\wh{q}'r}\big)\le \exp(-(\log N)^{1/3})$ unless all prime factors of $r$ and $\wh{q}'$ are prime factors of $g$. Furthermore as $\mu(r) \neq 0$, we in fact have that $r|g$ and it suffices to bound
\begin{align*}
 W(\log N)^{1+o(1)}\sum_{\substack{\wh{q}'|g\prod_{p|g}p^{v_p(q)}\\\wh{q}'\le \exp(O((\log\log N)^{5/2}))}}\sum_{\substack{b\in \mbf{Z}/\wh{q}'\mbf{Z}\\(b,\wh{q}') =1}}F\bigg(\frac{b}{\wh{q}'}\bigg).
\end{align*}
We now break into cases. If 
\[\prod_{p|g}p^{v_p(q)}\ge (\log N)^{16gC'},\]
then note that 
\[W\le \frac{(\log N)^{gC'}}{\prod_{p|g}p^{v_p(q)/8}}\le \frac{1}{\prod_{p|g}p^{v_p(q)/16}}.\]
As the sum we are bounding is controlled by 
\begin{align*}
W(\log N)^{1+o(1)}\sum_{\substack{\wh{q}'|g\prod_{p|g}p^{v_p(q)}\\\wh{q}'\le \exp(O((\log\log N)^{5/2}))}}\sum_{\substack{b\in \mbf{Z}/\wh{q}'\mbf{Z}\\(b,\wh{q}') =1}}F\bigg(\frac{b}{\wh{q}'}\bigg),
\end{align*}
the result follows via applying the large sieve bound in \cref{lem:well-output}. Finally, if 
\[\prod_{p|g}p^{v_p(q)}\le (\log N)^{16gC'},\] then note that desired sum is bounded by 
\[W(\log N)^{128gC'}\]
and that
\[W\le \bigg(\frac{q}{(T,q)}\bigg)^{-1/32}\le (\log N)^{-C'^2}\]
as desired.
\end{proof}
We are now finally in position to prove \cref{lem:correction}. 
\begin{proof}[Proof of \cref{lem:correction}.]
First fix $(\rho_1, \chi_{1})$ and $(\rho_2, \chi_{2})$ and note that we may assume by symmetry that $\chi_{2}$ is nontrivial. Furthermore, let $q_1$ and $q_2$ be the corresponding moduli. 

Note that $|x_j^{\rho_j-1}F_{\chi_{j},Q}(x_j)|\ll \tau(x_j) (\log M)^3$. Let $G(x_1,x_2,x_3)$ be any function; note that 
\begin{align*}
\sum_{x_3\in \mc{S}_b}\tau(x_3)&\bigg|\sum_{\substack{x_1+x_2 = N-x_3\\x_i\in \mc{S}_b}}\prod_{j=1}^{2}x_j^{\rho_j-1}F_{\chi_j,Q}(x_j)G(x_1,x_2,x_3)\bigg|\\
&\ll (\log M)^{O(1)} \sum_{\substack{x + x_2 + x_3 = N\\x_i\in \mc{S}_b}}|G(x_1,x_2,x_3)|\prod_{i=1}^{3}\tau(x_i)\\
&\ll (\log M)^{O(1)} \bigg(\sum_{\substack{x + x_2 + x_3 = N\\x_i\in \mc{S}_b}}|G(x_1,x_2,x_3)|^2\bigg)^{1/2}\bigg(\sum_{\substack{x + x_2 + x_3 = N\\x_i\in \mc{S}_b}}\prod_{i=1}^{3}\tau(x_i)^2\bigg)^{1/2}\\
&\ll (\log M)^{O(1)} \bigg(\sum_{\substack{x + x_2 + x_3 = N\\x_i\in \mc{S}_b}} 1\bigg)^{1/2}\cdot \bigg(\sum_{\substack{x + x_2 + x_3 = N\\x_i\in \mc{S}_b}}|G(x_1,x_2,x_3)|^2\bigg)^{1/2}.
\end{align*}
Therefore if $G(x_1,x_2,x_3)$ is a $\{0,1\}$ function (e.g. the indicator of an event) and it occurs with sufficient small logarithmic probability, we may exclude it from the analysis. 

We must now decompose $(x_1| x_1 + x_2 + x_3 = N)$ into measures $\mu_\mc{B}$ that have a product-like structure which we will later show is well-conditioned. Conditioning on the carries, we reveal $(x_{1,j}+x_{2,j},x_{3,j})$ for $j\in [k/2]$ and $(x_{1,j},x_{2,j},x_{3,j})$ for $j>k/2$. (We have adopted the digit conventions in \cref{lem:chernoff}.) Note that given this information that $\mc{B}_j$ for $j\in [k/2]$ is an interval (missing at most $2$ elements) of length $\ge \min(x_{1,j}+x_{2,j},2g-x_{1,j}-x_{2,j}) - O(1)$. Furthermore, given this information we may determine $x_1 + x_2$. Therefore in order to complete the proof it suffices to guarantee the assumption of \cref{lem:key-computation} holds with sufficiently high probability. 

Let $C$ be a sufficiently large constant in terms of $A$ and $B$ to be chosen later. Taking $G(x_1,x_2,x_3) = \mbm{1}\big[\tau(N-x_3)\ge (\log M)^{C}\big]$ and applying \cref{lem:divisor-function} proves that we may restrict attention to $\tau(x_1+x_2) = \tau(N-x_3)\le (\log M)^{C}$. Furthermore, by \cref{lem:divis}, we may assume that for all $p\le (\log M)^{C}$ that $v_p(T)\le C(\log\log M)$ and therefore,
\[\prod_{\substack{p|T\\p\le (\log N)^C}}p^{v_p(T)}\le \exp((\log\log N)^{5/2}).\]
Analogously, via \cref{lem:divis} (and that $q_1q_2$ has at most $O(\log M)$ distinct prime factors), we may assume that if $p|(x_1+x_2,q_1q_2)$, then $p\le (\log M)^{C}$. Furthermore, by \cref{lem:divis-small} and \cref{lem:large-conduc}, we may assume that $q_2/(T,q_2)\ge (\log N)^{C}$.

These conditions handle the necessary divisibility constraints on $T$ in \cref{lem:key-computation}. It remains to guarantee that $\mc{B}$ is well-conditioned. That the $\mc{B}_i$ are intervals missing at most $2$ points will always follow by construction. To prove that $\mc{B}$ satisfies the first item of \cref{def:well-conditioned}, we examine when $\min(x_{1,j}+x_{2,j},2g-x_{1,j}-x_{2,j})\le 2g^{99/100}$ for $j\in [k/2]$. Via the first item of \cref{lem:chernoff} and \cref{lem:chernoff-hoeff}, we see that the probability that the first item fails is proportional to $(\log N)^{-O(C)}$ which gives the desired result. To prove that the uniform measure on $\mc{B}$ satisfies the third item of \cref{def:well-conditioned}, note that 
\[\sum_{j\in [k/8,k/4]}\mbm{1}[\snorm{a/b\cdot g^{j}}\ge 1/g^2]\ge \frac{k}{20\log b}.\]
To prove this, note that $\snorm{a/b\cdot g^{j}}\ge 1/b$ for all $j$ and thus in any interval of length $\log b/\log g$ there is at least $1$ such index. Union-bounding over all choices of $1\le a<b$ with $(b,g) = 1$ along with first item of \cref{lem:chernoff} and \cref{lem:chernoff-hoeff} implies that this event fails with probability at most (say) $\exp(-(\log M)^{1/2})$ which is certainly acceptable. Finally, note that $x_{1,j},x_{2,j}\notin \{g-1,0,1\}$ for at least $1$ digit $j\in [2k/3,3k/4]$ with probability $N^{-\Omega(1)}$; this certifies the last condition in \cref{lem:key-computation}.

Therefore up to tolerable losses, we may assume we are in the situation of \cref{lem:key-computation} and the result follows. 
\end{proof}

\subsection{Computing main term}
We now give the proof of \cref{lem:main-term}. This is essentially a simplification of \cref{lem:correction} and in particular is able to avoid character sum estimates used in the previous part. 

We first prove a simpler variant of \cref{lem:key-computation} which will be used in the proof of \cref{lem:main-term}.
\begin{lemma}\label{lem:key-computation2}
Fix $B\ge 1$; there exists $C = C_{\ref{lem:key-computation2}}\ge 1$ and $C' = C_{\ref{lem:key-computation2}}'(B,g)\ge g^2B$ such that the following holds. Let $g\ge C$ and $x_1\sim \mu_{\vec{\mc{B}}}$ with $\vec{\mc{B}}$ where $\mc{B}$ is $C'$ well-conditioned and $|\mc{B}_j| = 1$ for $j\in [k/2,k]$.

Then 
\begin{align*}
\bigg|\sum_{\substack{x_1 + x_2 = T\\x_1\sim \mu_{\vec{\mc{B}}}}}\prod_{j=1}^{2}\Lambda_{Q}(x_i) - &\sum_{\substack{r_1'|g,r_2'|g\\r_1'r,r_2'r\le Q\\(g,r) =1}}\frac{\mu(r)^2\prod_{i=1}^{2}\mu(r_i')}{\phi(r)^2\prod_{i=1}^{2}\phi(r_i')}\sum_{\substack{b\in (\mbf{Z}/r\mbf{Z})^{\ast}\\b_i'\in (\mbf{Z}/r_i'\mbf{Z})^{\ast}}}\sum_{\substack{x_1 + x_2 = T\\x_1\sim \mu_{\vec{\mc{B}}}}}e\bigg(\frac{bT}{r} + \sum_{i=1}^{2}\frac{b_i'x_i}{r_i'}\bigg)\bigg|\\
&\ll_{A,B,g}(\log M)^{-B}\cdot \prod_{i\in [k]}|\mc{B}_i|
\end{align*}
and 
\begin{align*}
\sup_{x_1 + x_2 = T}\bigg|\sum_{\substack{r_1'|g,r_2'|g\\r_1'r,r_2'r\le Q\\(g,r) =1}}\frac{\mu(r)^2\prod_{i=1}^{2}\mu(r_i')}{\phi(r)^2\prod_{i=1}^{2}\phi(r_i')}\sum_{\substack{b\in (\mbf{Z}/r\mbf{Z})^{\ast}\\b_i'\in (\mbf{Z}/r_i'\mbf{Z})^{\ast}}}e\bigg(\frac{bT}{r} + \sum_{i=1}^{2}\frac{b_i'x_i}{r_i'}\bigg)\bigg|\ll_{g}(\log M)^2.
\end{align*}
\end{lemma}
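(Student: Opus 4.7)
The plan is to expand $\Lambda_Q(x_1)\Lambda_Q(x_2)$ via its Ramanujan-sum definition, factor each modulus into its ``$g$-part'' and its coprime-to-$g$ part, and show that the claimed main-term expression arises as exactly the diagonal contribution in which the coprime-to-$g$ parts of the two moduli agree; the off-diagonal remainder is controlled using \cref{lem:well-output}.

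First, I will expand $\Lambda_Q(x_i)=\sum_{q_i\le Q}\tfrac{\mu(q_i)}{\phi(q_i)}c_{q_i}(x_i)$ and, since $\mu(q_i)\ne 0$ forces $q_i$ squarefree, uniquely write $q_i=r_i'\widetilde r_i$ with $r_i'\mid\on{rad}(g)$ and $(\widetilde r_i,g)=1$. Multiplicativity separates $c_{q_i}(x_i)$ as $c_{r_i'}(x_i)c_{\widetilde r_i}(x_i)$ and similarly for $\mu$ and $\phi$; expanding each Ramanujan sum as an exponential sum and using $x_2=T-x_1$ rewrites the inner sum over $x_1\sim\mu_{\vec{\mc{B}}}$ as $e(\alpha_2 T)\,\wh{\mu_{\vec{\mc{B}}}}(\alpha_2-\alpha_1)$, with $\alpha_i=b_i'/r_i'+\hat a_i/\widetilde r_i$. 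The frequency then splits as $\alpha_2-\alpha_1=\alpha_g+\alpha_r$, where $\alpha_g=b_2'/r_2'-b_1'/r_1'$ is supported on $g^\infty$ denominators and $\alpha_r=\hat a_2/\widetilde r_2-\hat a_1/\widetilde r_1$ is supported on denominators coprime to $g$.

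The main term is then exactly the diagonal subsum where $\alpha_r\equiv 0\pmod 1$. Since $(\hat a_i,\widetilde r_i)=1$, this forces $\widetilde r_1=\widetilde r_2=:r$ and $\hat a_1=\hat a_2=:\hat a$. Summing $\hat a$ over $(\mbf{Z}/r\mbf{Z})^{\ast}$ collapses the outer exponential into the Ramanujan sum $c_r(T)=\sum_{b\in(\mbf{Z}/r\mbf{Z})^{\ast}}e(bT/r)$; after renaming $\hat a\mapsto b$ and rewriting $e(b_2' T/r_2')\wh{\mu_{\vec{\mc{B}}}}(b_2'/r_2'-b_1'/r_1')$ as the $(x_1+x_2=T)$-sum of $e(b_1' x_1/r_1'+b_2' x_2/r_2')$, one recovers the stated main-term expression verbatim.

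For the remaining off-diagonal contribution, the reduced coprime-to-$g$ denominator $d_2$ of $\alpha_2-\alpha_1$ is at least two. I would split into two ranges: if $1<d_2\le\exp((\log k)^4)$, the $L^\infty$ estimate of \cref{lem:well-output} gives $|\wh{\mu_{\vec{\mc{B}}}}(\alpha)|\le\prod_j|\mc{B}_j|\cdot\exp(-\Omega(k/(g^5(\log k)^4)))$, utterly dwarfing $(\log M)^{-B}$; if $d_2>\exp((\log k)^4)$, I would instead apply the large-sieve estimate (first part of \cref{lem:well-output}) dyadically in the joint variable $(\widetilde r_1,\widetilde r_2,\hat a_1,\hat a_2)$, fibered over the $O_g(1)$ choices of $\alpha_g$, and use the polynomial saving $(Q^2)^{1/50-o(1)}$ to absorb the $(\log M)^{O(C)}$-weights coming from divisor bounds and $\sum 1/\phi$. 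This large-sieve range is the main technical step, and mirrors Step~4 of the proof of \cref{lem:key-computation} but without any of the Dirichlet-character complications.

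For the $L^{\infty}$ bound on the main-term expression, the $b$ and $b_i'$ sums collapse back into Ramanujan sums, giving
\[\sum_{\substack{r_i'\mid g,\ (r,g)=1\\r_i'r\le Q}}\frac{\mu(r)^2\mu(r_1')\mu(r_2')}{\phi(r)^2\phi(r_1')\phi(r_2')}\,c_r(T)\,c_{r_1'}(x_1)\,c_{r_2'}(x_2).\]
The $r_i'$ sum is $O_g(1)$ since $r_i'\mid g$ and $|c_{r_i'}(x_i)|\le r_i'\le g$, while $|c_r(T)|\le(r,T)$ combined with $\phi(r)\gg r/\log\log r$ and the elementary estimate $\sum_{r\le Q}(r,T)/\phi(r)^2\ll (\log M)^2$ give the claimed bound.
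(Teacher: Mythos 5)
Your proposal takes essentially the same route as the paper's proof. Both expand $\Lambda_Q(x_1)\Lambda_Q(x_2)$ as a double Ramanujan-sum expansion, factor each modulus into its $g$-part and its coprime-to-$g$ part, and observe that the stated main term is exactly the diagonal contribution in which the coprime-to-$g$ parts of the two moduli agree (forced by coprimality once the frequency $\alpha_r$ vanishes). Both then kill the off-diagonal via the two halves of \cref{lem:well-output}: the $L^\infty$ estimate when the reduced coprime-to-$g$ denominator is nonzero but small, and the large-sieve estimate when it is large. Your treatment of the $L^\infty$ bound on the main-term expression (collapsing to $c_r(T)c_{r_1'}(x_1)c_{r_2'}(x_2)$ and using $|c_r(T)|\le(r,T)$) is a cosmetic variant of the paper's cruder triangle-inequality count; both give $(\log M)^2$.

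The one place where your sketch is thinner than the paper's argument, and where the real work lies, is the large-sieve range. You cannot apply \cref{lem:well-output} directly to the four-variable sum over $(\widetilde{r}_1,\widetilde{r}_2,\hat a_1,\hat a_2)$, because many quadruples produce the same reduced fraction and the multiplicity is $Q^{o(1)}$, which is a power of $M$. The weights $1/(\phi(\widetilde r_1)\phi(\widetilde r_2))$ are exactly what save you, but to see this one must carry out the gcd decomposition $\widetilde r_i=rr_i^\ast$ with $(r,r_i^\ast)=(r_1^\ast,r_2^\ast)=1$ and then invoke the combinatorial \cref{lem:frac} to bound the number of pairs $(\hat a_1,\hat a_2)$ mapping to a given combined numerator, so that each reduced fraction $b/r$ with $r\le Q^3$ ends up carrying total weight $\approx r^{-(1-o(1))}$. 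Only after this bookkeeping does the large-sieve saving $(R^2)^{1/50}$ beat the $1/R$ weight on each dyadic scale $R$. Your phrase ``absorb the $(\log M)^{O(C)}$-weights coming from divisor bounds and $\sum 1/\phi$'' gestures at this but does not supply the \cref{lem:frac}-type multiplicity control, which is the genuine content of that step.
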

\begin{proof}
Recalling the definition of $\Lambda_Q$ from \eqref{eq:ApproximantMain}, we have that
\begin{align*}
&\sum_{\substack{x_1 + x_2 = T\\x_1\sim \mu_{\vec{\mc{B}}}}}\prod_{j=1}^{2}\Lambda_{Q}(x_j) = \sum_{\substack{x_1 + x_2 = T\\x_1\sim \mu_{\vec{\mc{B}}}}}\sum_{r_1,r_2\le Q}\frac{\prod_{i=1}^{2}\mu(r_i)}{\prod_{i=1}^{2}\phi(r_i)}\sum_{\substack{b_i\in (\mbf{Z}/r_i\mbf{Z})^{\ast}}}e\bigg(\sum_{i=1}^{2}\frac{b_ix_i}{r_i}\bigg).
\end{align*}
We now define $\mc{S}$ to be the set of fraction $\frac{a}{b}$ with $(a,b) = 1$ and $b|g$. We first bound the contribution from all terms with $b_1/r_1-b_2/r_2\notin \mc{S}$. Since the M\"obius function is supported on squarefree numbers, the contribution is bounded by
\begin{align*}
&\sum_{\substack{r_1,r_2\le Q\\r_1'|g,r_2'|g\\ (r_1,r_1') = (r_2,r_2') = (r_1r_2, g) = 1}}\frac{\prod_{i=1}^{2}|\mu(r_i)|\cdot |\mu(r_i')|}{\prod_{i=1}^{2}\phi(r_i)\phi(r_i')}\sum_{\substack{b_i\in (\mbf{Z}/r_i\mbf{Z})^{\ast}\\ b_i'\in (\mbf{Z}/r_i'\mbf{Z})^{\ast}\\b_1/r_1-b_2/r_2\notin \mc{S}}}\bigg|\sum_{\substack{x_1 + x_2 = T\\x_1\sim \mu_{\vec{\mc{B}}}}}e\bigg(\sum_{i=1}^{2}\frac{b_ix_i}{r_i} + \frac{b_i'x_i}{r_i'}\bigg)\bigg|\\
&\ll_{g} (\log\log N)^{O(1)}\sup_{\beta\in \mbf{R}/\mbf{Z}}\sum_{\substack{r_1,r_2\le Q\\ (r_1r_2, g) = 1}}\frac{|\mu(r_1)|\cdot |\mu(r_2)|}{r_1r_2}\sum_{\substack{b_i\in (\mbf{Z}/r_i\mbf{Z})^{\ast}\\b_1/r_1+b_2/r_2\notin \mc{S}}}\bigg|F\bigg(\sum_{i=1}^{2}\frac{b_i}{r_i} + \beta\bigg)\bigg|.
\end{align*}
We may write $r_1 = r r_1^{\ast}$ and $r_2 = r r_2^{\ast}$ with $(r,r_1^{\ast}) = (r,r_2^{\ast}) = (r_1^{\ast},r_2^{\ast}) = 1$. With this notation, the above is bounded by
\begin{align*}
(\log\log N)^{O(1)}\sup_{\beta\in \mbf{R}/\mbf{Z}}\sum_{\substack{r,r_1^{\ast},r_2^{\ast}\le Q\\ (rr_1^{\ast}r_2^{\ast}, g) = (r,r_1^{\ast}) = (r,r_2^{\ast}) = (r_1^{\ast},r_2^{\ast}) = 1}}\frac{1}{r^2r_1^{\ast}r_2^{\ast}}\sum_{\substack{b_i\in (\mbf{Z}/rr_i^{\ast}\mbf{Z})^{\ast}\\b_1/(rr_1^{\ast})+b_2/(rr_2^{\ast})\notin \mc{S}}}\bigg|F\bigg(\sum_{i=1}^{2}\frac{b_i}{rr_i^{\ast}} + \beta\bigg)\bigg|.
\end{align*}
Combining denominators (via \cref{lem:frac}) and reducing the fraction with respect to $r$, we obtain that the above is bounded by 
\begin{align*}
(\log\log N)^{1+O(1)}\sup_{\beta\in \mbf{R}/\mbf{Z}}\sum_{\substack{r,r_1^{\ast},r_2^{\ast}\le Q\\ (rr_1^{\ast}r_2^{\ast}, g) = (r,r_1^{\ast}) = (r,r_2^{\ast}) = (r_1^{\ast},r_2^{\ast}) = 1}}\frac{1}{rr_1^{\ast}r_2^{\ast}}\sum_{\substack{b\in (\mbf{Z}/rr_1^{\ast}r_2^{\ast}\mbf{Z})^{\ast}\\b/rr_1^{\ast}r_2^{\ast}\notin \mc{S}}}\bigg|F\bigg(\frac{b}{rr_1^{\ast}r_2^{\ast}} + \beta\bigg)\bigg|.
\end{align*}
Combining $rr_1^{\ast}r_2^{\ast}$, we obtain that the above is bounded by 
\begin{align*}
(\log\log N)^{1+O(1)}\sup_{\beta\in \mbf{R}/\mbf{Z}}\sum_{\substack{r\le Q^{3}\\ (r, g) = 1}}\frac{1}{r^{1-o(1)}}\sum_{\substack{b\in (\mbf{Z}/r\mbf{Z})^{\ast}\\b/r\notin \mc{S}}}\bigg|F\bigg(\frac{b}{r} + \beta\bigg)\bigg|.
\end{align*}
If $r\ge (\log N)^{O(C)}$, then the large sieve inequality in \cref{lem:well-output} completes the proof. Furthermore, noting that $\beta$ in the above analysis may be taken to be a fraction with denominator dividing $g$, we immediately have the desired result by the $L^{\infty}$-estimate in \cref{lem:well-output} unless $r = 1$. This forces $b_1/r_1 - b_2/r_2\in \mc{S}$ and thus we may restrict attention to this case. 

Therefore, it suffices to analyze
\begin{align*}
&\sum_{r_1,r_2\le Q}\frac{\prod_{i=1}^{2}\mu(r_i)}{\prod_{i=1}^{2}\phi(r_i)}\sum_{\substack{b_i\in (\mbf{Z}/r_i\mbf{Z})^{\ast}\\b_1/r_1-b_2/r_2\in \mc{S}}}\sum_{\substack{x_1 + x_2 = T\\x_1\sim \mu_{\vec{\mc{B}}}}}e\bigg(\sum_{i=1}^{2}\frac{b_ix_i}{r_i}\bigg)\\
&= \sum_{\substack{r_1'|g,r_2'|g\\r_1'r_1,r_2'r_2\le Q\\(g,r_1) = (g,r_2) = 1}}\frac{\prod_{i=1}^{2}\mu(r_i)\mu(r_i')}{\prod_{i=1}^{2}\phi(r_i)\phi(r_i')}\sum_{\substack{b_i\in (\mbf{Z}/r_i\mbf{Z})^{\ast}\\b_i'\in (\mbf{Z}/r_i'\mbf{Z})^{\ast}\\b_1/r_1-b_2/r_2=0}}\sum_{\substack{x_1 + x_2 = T\\x_1\sim \mu_{\vec{\mc{B}}}}}e\bigg(\sum_{i=1}^{2}\frac{b_ix_i}{r_i} + \frac{b_i'x_i}{r_i'}\bigg)\\
&=\sum_{\substack{r_1'|g,r_2'|g\\r_1'r,r_2'r\le Q\\(g,r) =1}}\frac{\mu(r)^2\prod_{i=1}^{2}\mu(r_i')}{\phi(r)^2\prod_{i=1}^{2}\phi(r_i')}\sum_{\substack{b\in (\mbf{Z}/r\mbf{Z})^{\ast}\\b_i'\in (\mbf{Z}/r_i'\mbf{Z})^{\ast}}}\sum_{\substack{x_1 + x_2 = T\\x_1\sim \mu_{\vec{\mc{B}}}}}e\bigg(\frac{bT}{r} + \sum_{i=1}^{2}\frac{b_i'x_i}{r_i'}\bigg)
\end{align*}
as desired. (Note that the first equality follows since $r_i$ are relatively prime to $g$ so we must have $b_1/r_1 - b_2/r_2 = 0$.) The second claim follows as 
\begin{align*}
\sup_{x_1 + x_2 = T}&\bigg|\sum_{\substack{r_1'|g,r_2'|g\\r_1'r,r_2'r\le Q\\(g,r) =1}}\frac{\mu(r)^2\prod_{i=1}^{2}\mu(r_i')}{\phi(r)^2\prod_{i=1}^{2}\phi(r_i')}\sum_{\substack{b\in (\mbf{Z}/r\mbf{Z})^{\ast}\\b_i'\in (\mbf{Z}/r_i'\mbf{Z})^{\ast}}}e\bigg(\frac{bT}{r} + \sum_{i=1}^{2}\frac{b_i'x_i}{r_i'}\bigg)\bigg|\\
&\le (\log\log M)^{O(1)}\sum_{\substack{r_1'|g,r_2'|g\\r_1'r,r_2'r\le Q\\(g,r) =1}}\frac{1}{r^2}\sum_{\substack{b\in (\mbf{Z}/r\mbf{Z})^{\ast}\\b_i'\in (\mbf{Z}/r_i'\mbf{Z})^{\ast}}} 1\ll_{g}(\log M)^2
\end{align*}
as desired. 
\end{proof}

We now proceed with this proof of \cref{lem:main-term}; the argument is again a simplification of \cref{lem:correction}.
\begin{proof}[{Proof of \cref{lem:main-term}}]
Via decomposing in $C$-well conditioned measures as in \cref{lem:correction}, and noting that the non-well-conditioned cases contribute minimally, it suffices to estimate 
\[\sum_{\substack{x_1+x_2+x_3 = N\\x_i\in \mc{S}_b}}\Lambda_{Q}(x_3)\sum_{\substack{r_1'|g,r_2'|g\\r_1'r,r_2'r\le Q\\(g,r) =1}}\frac{\mu(r)^2\prod_{i=1}^{2}\mu(r_i')}{\phi(r)^2\prod_{i=1}^{2}\phi(r_i')}\sum_{\substack{b\in (\mbf{Z}/r\mbf{Z})^{\ast}\\b_i'\in (\mbf{Z}/r_i'\mbf{Z})^{\ast}}}e\bigg(\frac{b(N-x_3)}{r} + \sum_{i=1}^{2}\frac{b_i'x_i}{r_i'}\bigg).\]
Via expanding out $\Lambda_{Q}(x_3)$, this is equivalent to
\begin{align*}
\sum_{\substack{r_1'|g,r_2'|g,r_3'|g\\r_1'r,r_2'r,r_3'r_2\le Q\\(g,r) = (g,r_3)=1}}\frac{\mu(r)^2\mu(r_3)\prod_{i=1}^{3}\mu(r_i')}{\phi(r)^2\phi(r_3)\prod_{i=1}^{3}\phi(r_i')}\sum_{\substack{b\in (\mbf{Z}/r\mbf{Z})^{\ast}\\b_3\in (\mbf{Z}/r_3\mbf{Z})^{\ast}\\b_i'\in (\mbf{Z}/r_i'\mbf{Z})^{\ast}}}\sum_{\substack{x_1 + x_2 + x_3= N\\x_i\in \mc{S}_b }}e\bigg(\frac{b_3x_3}{r_3}-\frac{bx_3}{r} + \frac{bN}{r} + \sum_{i=1}^{3}\frac{b_i'x_i}{r_i'}\bigg).
\end{align*}
Note that $\sum_{i=1}^{3}b_i'x_i/r_i'$ depends only on the final digit of $(x_1,x_2,x_3)$. This expression is essentially of the form considered in \cref{lem:key-computation2} (where $\mu(r)/\phi(r)$ is replaced by the smaller in magnitude $\mu(r)^2/\phi(r)^2$). Via revealing the last digits of $(x_1,x_2,x_3)$, decomposing into well conditioned measures, and applying an essentially identical (and simpler) analysis, we may restrict attention to the cases $b_3/r_3 = b/r$ (up to an acceptable error). 

Therefore, up to an acceptable error, it suffices to analyze 
\begin{align*}
\sum_{\substack{r_1'|g,r_2'|g,r_3'|g\\r_1'r,r_2'r,r_3'r\le Q\\(g,r) =1}}\frac{\mu(r)^3\prod_{i=1}^{3}\mu(r_i')}{\phi(r)^3\prod_{i=1}^{3}\phi(r_i')}\sum_{\substack{b\in (\mbf{Z}/r\mbf{Z})^{\ast}\\b_i'\in (\mbf{Z}/r_i'\mbf{Z})^{\ast}}}\sum_{\substack{x_1 + x_2 + x_3= N\\x_i\in \mc{S}_b }}e\bigg( \frac{bN}{r} + \sum_{i=1}^{3}\frac{b_i'x_i}{r_i'}\bigg).
\end{align*}
Note as we are summing only $O_g(r)$ Fourier coefficients in the inner sum, the contribution of the terms $r > (\log N)^A$ is at most an $O((\log N)^{-A})$ proportion of the sum. Thus, it suffices to instead analyze the expression 
\begin{align*}
\bigg(\sum_{\substack{r\le (\log N)^{A}\\(g,r) =1}}\frac{\mu(r)^3}{\phi(r)^3}\sum_{b\in (\mbf{Z}/r\mbf{Z})^{\ast}}e\bigg(\frac{bN}{r}\bigg)\bigg) \cdot \sum_{\substack{r_i'|g\\b_i'\in (\mbf{Z}/r_i'\mbf{Z})^{\ast}}}\frac{\prod_{i=1}^{3}\mu(r_i')}{\prod_{i=1}^{3}\phi(r_i')}\sum_{\substack{x_1 + x_2 + x_3= N\\x_i\in \mc{S}_b }}e\bigg(\sum_{i=1}^{3}\frac{b_i'x_i}{r_i'}\bigg).
\end{align*}
We have (using that the Ramanujan sum is multiplicative) that
\begin{align*}
\sum_{\substack{r\le (\log N)^{A}\\(g,r) =1}}\frac{\mu(r)^3}{\phi(r)^3}\sum_{b\in (\mbf{Z}/r\mbf{Z})^{\ast}}e\bigg(\frac{bN}{r}\bigg) &= \sum_{\substack{r\ge 1\\(g,r) =1}}\frac{\mu(r)^3}{\phi(r)^3}\sum_{b\in (\mbf{Z}/r\mbf{Z})^{\ast}}e\bigg(\frac{bN}{r}\bigg)+ O((\log N)^{-A})\\
&=\prod_{\substack{p\\(p,g) =1}}\bigg(1 - \frac{1}{(p-1)^3}\cdot \sum_{b\in (\mbf{Z}/p\mbf{Z})^{\ast}}e\bigg(\frac{bN}{p}\bigg)\bigg)\\
&=\prod_{\substack{p\\(p,g) =1}}\bigg(1 - \frac{p\mbm{1}_{p|N} - 1}{(p-1)^3}\bigg).
\end{align*}
For the second sum, it suffices to prove that 
\[\sum_{\substack{r|g\\b\in (\mbf{Z}/r\mbf{Z})^{\ast}}}\frac{\mu(r)}{\phi(r)}e\bigg(\frac{bx}{r}\bigg) = \mbm{1}_{(x,g) = 1}.\]
Note that it suffices to consider $g = p_1\cdots p_k$ which are distinct primes. The above then factors as 
\[\prod_{i=1}^{k}\bigg(1 -\frac{\sum_{b\in (\mbf{Z}/p_i\mbf{Z})^{\ast}}e(bx/p_i)}{(p_i-1)}\bigg) = \mbm{1}_{(x,g) = 1} \cdot \prod_{i=1}^{k}\frac{p_i}{(p_i-1)}.\]
These together give exactly the stated main term and we have completed the proof. 
\end{proof}

\appendix
\section{Character sum estimates}\label{app:char-sum}

We prove the various character sum estimates that are implicit in \cref{lem:char-sum}. Various input which we require all appear in \cite[Chap.~11,~12]{IK04}.

The first estimate we require is the Weil bound (see \cite[Theorem 11.23]{IK04}).
\begin{theorem}\label{thm:weil}
Let $p$ be prime and $\chi$ be a nontrivial multiplicative character of $\mb{F}_p$ of order $d$. Let $f\in \mb{F}_p[x]$ be a nontrivial polynomial with $m$ distinct roots and which is not a perfect $d$-th power. Then 
\[\bigg|\sum_{x \in \mb{F}_p} \chi(f(x))\bigg| \le (m - 1)\sqrt{p}.\]
\end{theorem}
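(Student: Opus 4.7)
The plan is to prove the Weil bound via the Riemann hypothesis for curves over finite fields, which is the classical route taken in \cite[Chapter~11]{IK04}. The key observation is that the character sum $\sum_{x \in \mb{F}_p}\chi(f(x))$ is (up to trivial contributions) an eigencomponent of the point count of the algebraic curve $C \colon y^d = f(x)$ over $\mb{F}_p$, and Weil's theorem furnishes square-root cancellation for such point counts.

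First, I would set up the identity
\[\#\{(x,y) \in \mb{F}_p^2 : y^d = f(x)\} = \#\{x : f(x) = 0\} + \sum_{\substack{x\in \mb{F}_p\\ f(x)\neq 0}}\sum_{\psi^d = 1}\psi(f(x)) = p + \sum_{\substack{\psi^d = 1\\\psi\neq 1}}\sum_{x \in \mb{F}_p}\psi(f(x)),\]
where the inner sum is over the $d$ multiplicative characters of $\mb{F}_p^*$ with $\psi^d$ trivial, extended by $\psi(0)=0$. To isolate the particular character $\chi$ of order $d$ rather than averaging over all $\psi$, I would pass to the zeta function of the smooth projective model $\widetilde{C}$ of $y^d = f(x)$ and use the decomposition
\[Z_{\widetilde{C}}(T) = \prod_{\psi^d = 1}L(T, \psi)\]
coming from the action of the Galois group $\mu_d$ of the cover $\widetilde{C} \to \mb{P}^1_x$. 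The logarithmic derivative of the $\chi$-factor encodes exactly the desired character sum, up to terms arising from ramification and the points at infinity.

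Next I would verify that the hypotheses on $f$ (namely, $m$ distinct roots and not a perfect $d$-th power) ensure that the cover is geometrically irreducible; Riemann--Hurwitz then gives a genus bound, and a conductor/ramification computation shows $\deg L(T,\chi) \le m-1$. By the Riemann hypothesis for curves, every reciprocal root of $L(T,\chi)$ has absolute value $\sqrt{p}$, whence
\[\Big|\sum_{x \in \mb{F}_p}\chi(f(x))\Big| \le (m-1)\sqrt{p}.\]
The main obstacle is of course the Riemann hypothesis for curves itself, which is Weil's theorem (with later elementary proofs by Stepanov and Bombieri); in the setting of this paper the result is used as a black box. The bookkeeping of ramified points, points at infinity, and the conductor contribution to $\deg L(T,\chi)$ is routine once that input is in hand.
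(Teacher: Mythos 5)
The paper does not prove this statement at all: it is quoted verbatim as \cite[Theorem~11.23]{IK04} and used as a black box, so there is no ``paper's own proof'' to compare against. Your proposal reconstructs the standard Weil $L$-function proof, which is indeed essentially the route taken in \cite[Chapter~11]{IK04}, and the main contours (the point-count identity, the factorization of the zeta function of the cyclic cover into $L$-factors indexed by characters $\psi$ with $\psi^d=1$, the degree bound on $L(T,\chi)$ via ramification, and the Riemann hypothesis for curves as the engine of square-root cancellation) are all correct.

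One imprecision worth flagging: the stated hypotheses do \emph{not} ensure that the affine curve $y^d = f(x)$ is geometrically irreducible. If $d$ is composite and $f = g^{d/e}$ with $1 < e < d$ and $g$ not an $e$-th power, then $f$ is not a perfect $d$-th power yet the curve splits into several components. Geometric irreducibility of the full cover is, however, not what the argument needs. What is needed is that the single Dirichlet/Hecke character $x \mapsto \chi(f(x))$ on $\mb{F}_p(x)$ is non-trivial and that its conductor (supported on the distinct roots of $f$ and possibly at $\infty$, with tame ramification since $d \mid p-1$ forces $p \nmid d$) has degree at most $m+1$, giving $\deg L(T,\chi) = \deg \mathfrak{f}(\chi) - 2 \le m-1$ on the genus-zero base. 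Non-triviality of $\chi \circ f$ is exactly what ``$\chi$ of exact order $d$ and $f$ not a perfect $d$-th power'' guarantees; the $L$-function $L(T, \chi\circ f)$ is then an Artin $L$-function on $\mb{P}^1$ in its own right and satisfies RH by Weil's theorem regardless of whether the ambient Kummer cover is irreducible. Phrasing the argument directly in terms of this Hecke character, rather than first arguing irreducibility of the curve, is both cleaner and avoids a claim that is false as stated.
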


We next require \cite[Lemma~12.2,~Lemma~12.3]{IK04} which allow one to simplify character sums for cyclic groups of prime power order. The proofs are elementary and rely on a Hensel--type analysis.
\begin{lemma}\label{thm:evenbound}
Let $p$ be prime, $\alpha\ge 1$, and $f,g\in \mbf{Z}[x]$. Then if $q = p^{2\alpha}$, we have 
\[\sum_{y\in \mbf{Z}/q\mbf{Z}}\chi(f(y))e(ag(y)/q) = p^{\alpha}\sum_{\substack{y\in (\mbf{Z}/p^{\alpha}\mbf{Z})\\ h(y)\equiv 0\imod p^{\alpha}}}\chi(f(y))e(ag(y)/q)\]
where:
\begin{itemize}
    \item $h(y) = ag'(y) + bf'/f(y)$
    \item $b \in \mbf{Z}$ is chosen so that $\chi(1 + zp^\alpha) = e(bz/p^\alpha)$
\end{itemize}

Furthermore, if $q = p^{2\alpha + 1}$, then 
\[\sum_{y\in \mbf{Z}/q\mbf{Z}}\chi(f(y))e(ag(y)/q) = p^{\alpha}\sum_{\substack{y\in (\mbf{Z}/p^{\alpha}\mbf{Z})\\ h(y)\equiv 0\imod p^{\alpha}}}\chi(f(y))e(ag(y)/q)G_p(y)\]
where:
\begin{itemize}
    \item $G_p(y) = \sum_{z \in \mbf{F}_p} e_p(d(y)z^2 + h(y)p^{-\alpha}z)$
    \item $h(y) = ag'(y) + bf'/f(y)$
    \item $d(y) = \frac{a}{2}g''(y) + \frac{b}{2}f''/f(y) + (p - 1)\frac{b}{2}(f'/f(y))^2$
    \item $b\in \mbf{Z}$ is chosen so that $\chi(1 + zp^\alpha) = e\big(b\frac{z}{p^{\alpha + 1}} + (p - 1)\frac{bz^2}{2p}\big).$
\end{itemize}
\end{lemma}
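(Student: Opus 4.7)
The plan is to carry out the standard Hensel-lifting / stationary phase argument for character sums on $\mbf{Z}/p^k\mbf{Z}$. In both cases I parametrize $y \in \mbf{Z}/q\mbf{Z}$ by a coarse part $y_0 \in \mbf{Z}/p^\alpha\mbf{Z}$ together with a fine perturbation of scale $p^\alpha$, then Taylor expand $f$ and $g$ about $y_0$ to the precision dictated by $q$, and reduce the inner sum to a delta condition (even case) or a delta condition plus a one-dimensional Gauss sum (odd case).

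For $q = p^{2\alpha}$, I write $y = y_0 + p^\alpha w$ with $y_0, w \in \mbf{Z}/p^\alpha\mbf{Z}$. Taylor expansion gives $g(y_0+p^\alpha w) \equiv g(y_0)+p^\alpha w g'(y_0)\pmod{p^{2\alpha}}$ and likewise for $f$; terms with $p \mid f(y_0)$ vanish since $\chi(f(y))$ does. Factoring yields
\[\chi(f(y))e(ag(y)/q) = \chi(f(y_0))e(ag(y_0)/q) \cdot \chi\bigl(1 + p^\alpha w f'(y_0)/f(y_0)\bigr)\cdot e(awg'(y_0)/p^\alpha).\]
The subgroup $\{1+p^\alpha z\}\le(\mbf{Z}/q\mbf{Z})^\ast$ is isomorphic to $(\mbf{Z}/p^\alpha\mbf{Z},+)$ via $1+zp^\alpha\mapsto z$ (the cross term $z_1z_2p^{2\alpha}$ vanishes modulo $q$), so its characters take the form $e(bz/p^\alpha)$, which defines the $b$ appearing in the statement. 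Collecting the linear-in-$w$ phase produces $e(wh(y_0)/p^\alpha)$, and summing $w$ over $\mbf{Z}/p^\alpha\mbf{Z}$ yields the indicator $p^\alpha \mbm{1}[h(y_0)\equiv 0\imod{p^\alpha}]$, which is the claimed formula.

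For $q = p^{2\alpha+1}$, the same parametrization works with $w \in \mbf{Z}/p^{\alpha+1}\mbf{Z}$, but one must Taylor expand to second order; third-order remainders are divisible by $p^{3\alpha}$ and hence negligible once $\alpha \ge 1$ and $p$ is odd (so that $2^{-1}$ exists modulo the relevant power of $p$). The required character formula with its quadratic correction,
\[\chi(1+zp^\alpha) = e\!\left(\frac{bz}{p^{\alpha+1}} + \frac{(p-1)bz^2}{2p}\right),\]
I would derive via the $p$-adic logarithm: $\log(1+zp^\alpha) \equiv zp^\alpha - z^2p^{2\alpha}/2 \pmod{p^{2\alpha+1}}$, so pulling $\chi$ back through $\log$ to the additive group $p^\alpha\mbf{Z}/p^{2\alpha+1}\mbf{Z}$ linearizes it into an additive character whose leading coefficient fixes $b$. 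Collecting terms, the $w$-dependent phase has linear coefficient $h(y_0)/p^{\alpha+1}$ and quadratic coefficient $d(y_0)/p$. Splitting $w = w_0 + pw_1$ with $w_0 \in \mbf{Z}/p\mbf{Z}$ and $w_1 \in \mbf{Z}/p^\alpha\mbf{Z}$, the quadratic phase reduces to $w_0^2 d(y_0)/p$ modulo $1$ (the cross and $w_1^2$ terms are integers), while the linear phase decouples as $w_0 h(y_0)/p^{\alpha+1} + w_1 h(y_0)/p^\alpha$. The $w_1$-sum produces $p^\alpha \mbm{1}[h(y_0)\equiv 0\imod{p^\alpha}]$, and the residual $w_0$-sum is exactly $G_p(y_0) = \sum_{z \in \mbf{F}_p} e_p(d(y_0) z^2 + h(y_0) p^{-\alpha} z)$.

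The main obstacle is the bookkeeping in the odd case: correctly extracting the quadratic correction in the character formula (the $(p-1)$ factor in the numerator arises from rewriting $-1/2$ as a fraction with denominator dividing $2p$) and verifying that the third-order Taylor remainder genuinely vanishes modulo $q$. The argument tacitly assumes $p$ is odd so that $2$ is invertible in the relevant $\mbf{Z}/p^j\mbf{Z}$; the $p = 2$ case would require separately analyzing the (non-cyclic) structure of $(\mbf{Z}/2^k\mbf{Z})^\ast$, but this is not needed for the application to \cref{lem:char-sum}.
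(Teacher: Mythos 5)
The paper does not prove this lemma; it cites \cite[Lemmas~12.2~and~12.3]{IK04} and notes the argument is a Hensel-type computation. Your outline reconstructs precisely that argument --- parametrize $y = y_0 + p^\alpha w$, Taylor-expand to the precision forced by $q$, linearize $\chi$ on the congruence subgroup $1 + p^\alpha\mbf{Z}$, and evaluate the inner $w$-sum as a delta condition (for $q = p^{2\alpha}$) or a delta condition times a one-variable Gauss sum (for $q = p^{2\alpha+1}$). Your identification of the failure of the cross term $z_1z_2p^{2\alpha}$ to vanish mod $p^{2\alpha+1}$ as the source of the quadratic correction in the character formula is exactly the crux of the odd case, so the substance is right and matches the source the paper relies on.

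The final sentence should be retracted: the $p = 2$ case of this lemma is \emph{not} dispensable for \cref{lem:char-sum}. The paper handles odd $p$ in \cref{lem:odd-char} and $p = 2$ separately in \cref{lem:char-sum-even}, and the latter explicitly invokes \cref{thm:evenbound} in both halves (together with \cref{lem:square-count-even}). The good news is your worry is largely unfounded in the regime that is actually used. For $q = 2^{2\alpha}$ the second-order Taylor remainder is $2^{2\alpha}w^2 \cdot g''(y_0)/2$, which is an integer multiple of $2^{2\alpha}$ because the divided Taylor coefficients $g^{(n)}(y_0)/n!$ are integral for $g \in \mbf{Z}[x]$, and $1 + 2^\alpha\mbf{Z}/2^{2\alpha}\mbf{Z}$ is cyclic for every $\alpha\ge 1$. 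For $q = 2^{2\alpha+1}$ the subgroup $1 + 2^\alpha\mbf{Z}$ lies inside the cyclic factor of $(\mbf{Z}/2^{2\alpha+1}\mbf{Z})^\ast$ once $\alpha \ge 2$; the only problematic value $\alpha = 1$ never occurs in \cref{lem:char-sum-even}, where $a_1 \ge 21$ forces $\alpha \ge 10$. Moreover the application there only uses the count of roots of $h(x)\equiv 0\imod p^\alpha$ and the trivial bound $|G_p|\ll\sqrt{p}$, not the exact expression for $d(y)$ (which indeed does not make literal sense at $p=2$ because of the $(p-1)/2$ factor). So your outline does in fact cover the case that is used; just replace the claim that $p=2$ can be ignored with the observation that it goes through unchanged for $q = p^{2\alpha}$ and for $q = p^{2\alpha+1}$ with $\alpha\ge 2$.
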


We next require the following elementary lemma which bounds the number of solutions to a quadratic modulo an odd prime power. 
\begin{lemma}\label{lem:square-count-odd}
For $a\in \mbf{Z}$, $p$ an odd prime, and $k\ge 1$ an integer, we have that 
\[\bigg|\{x^2\equiv a\imod p^k: x\in \mbf{Z}/p^k\mbf{Z}\}\bigg|\le 2p^{k-\lceil k/2\rceil}.\]
\end{lemma}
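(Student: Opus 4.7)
The plan is to perform casework on $v_p(a)$, use Hensel's lemma to reduce to counting squares of a unit modulo a prime power, and keep careful track of the lifting multiplicity. Set $j = v_p(a)$ (with the convention $j = \infty$ if $a \equiv 0 \pmod{p^k}$). The easy case is $j = 0$: here $a$ is a unit modulo $p$, so by Hensel's lemma the at-most-two solutions of $x^2 \equiv a \pmod p$ (using that $p$ is odd so the derivative $2x$ is a unit) lift uniquely to $\mathbf{Z}/p^k\mathbf{Z}$. This gives at most $2 \le 2p^{k - \lceil k/2\rceil}$ solutions.

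Next, I would handle the degenerate case $j \ge k$, where the congruence becomes $x^2 \equiv 0 \pmod{p^k}$. A solution $x$ must satisfy $v_p(x) \ge \lceil k/2 \rceil$, and conversely any such $x$ works, so the count is exactly $p^{k - \lceil k/2\rceil}$, within the claimed bound.

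The main case is $1 \le j < k$. If $j$ is odd, then $v_p(x^2)$ is even for any $x$, so $v_p(x^2) = j$ is impossible and there are no solutions. If $j = 2m$ is even, write $a = p^{2m} a'$ with $(a', p) = 1$. Any solution $x$ must have $v_p(x) \ge m$; writing $x = p^m y$ with $y \in \mathbf{Z}/p^{k-m}\mathbf{Z}$, the congruence becomes $y^2 \equiv a' \pmod{p^{k-2m}}$ with $a'$ a unit. Applying the Hensel-lifted count from the first case to modulus $p^{k-2m}$, there are at most $2$ residues $y_0 \in \mathbf{Z}/p^{k-2m}\mathbf{Z}$ that work, and each has $p^{(k-m) - (k-2m)} = p^m$ lifts to $\mathbf{Z}/p^{k-m}\mathbf{Z}$, giving at most $2p^m$ values of $x$. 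Since $2m = j < k$ forces $m \le \lfloor k/2 \rfloor = k - \lceil k/2\rceil$, this is bounded by $2p^{k - \lceil k/2\rceil}$ as required.

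No genuine obstacle arises; the argument is a standard valuation analysis plus Hensel's lemma, and the only mildly delicate point is tracking that the inequality $m \le k - \lceil k/2\rceil$ comes from the strict inequality $j < k$ (the boundary case $j \ge k$ having been treated separately). The oddness of $p$ enters only to guarantee that the derivative $2x$ of $x^2$ is invertible modulo $p$ for units $x$, which is exactly what Hensel's lemma needs.
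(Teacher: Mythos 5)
Your proof is correct and follows essentially the same route as the paper: reduce by extracting $p^{v_p(a)}$, invoke Hensel's lemma (valid since $p$ is odd so $2x$ is a unit when $(x,p)=1$) to get at most two solutions in the unit case, and bound the number of lifts by a power of $p$. The only difference is that you are more explicit about the lift-counting and you treat the degenerate case $v_p(a)\ge k$ separately, which the paper leaves implicit; both are fine.
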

\begin{proof}
Let $v_p(a) = 2\ell$; if $2\nmid v_p(a)$, then there are no solutions. Replacing $p^k$ by $p^{k-2\ell}$ and noting that $p^{\ell}|x$, it suffices to prove that if $(a,p) = 1$, then
\[\bigg|\{x^2\equiv a\imod p^k: x\in \mbf{Z}/p^k\mbf{Z}\}\bigg|\le 2.\]
The result is immediate for $k = 1$ and via Hensel's lemma and $(2x,p) = 1$, we find that there are at most $2$ solutions for all $k\ge 1$.
\end{proof}

We are now in position to prove \cref{lem:char-sum} when $p$ is odd; the case when $p = 2$ is more delicate and handled at the end. Note that \cref{lem:char-sum} in the case of odd primes is handled by replacing $t+b_2$ by $t$ and taking $m = b_1-b_2\cdot p^{a_1-a_2}$ in \cref{lem:odd-char}.

\begin{lemma}\label{lem:odd-char}
Let $p$ be an odd prime and $\chi_i$ be primitive characters modulo $p^{a_i}$ with $a_1\ge a_2$. Let $\alpha = \lfloor a_1/2\rfloor$. If $\alpha > 2(a_1-a_2) + v_p(T)$, then 
\[|\mb{E}_{\ell\in \mbf{Z}/p^{a_2}\mbf{Z}}\chi_1(\ell p^{a_1-a_2} + m)\chi_2(\ell)e(\ell T/p^{a_2})|\le \min(1,2p^{-\lceil (\alpha - \gamma)/2\rceil + (a_1-a_2)}).\]
Furthermore, if $a_1 = a_2 = 1$ and $p\nmid T$, we have that 
\[|\mb{E}_{\ell\in \mbf{Z}/p\mbf{Z}}\chi_1(\ell + m)\chi_2(\ell)e(\ell T/p)|\le 3p^{-1/4}.\]
\end{lemma}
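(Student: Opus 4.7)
The plan is to split into two regimes and handle each separately.

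For the case $a_1 = a_2 = 1$ with $p\nmid T$, the Weil bound of \cref{thm:weil} is essentially immediate. We may assume $p\nmid m$ (else $\chi_1(\ell+m)$ vanishes throughout the support and the average is trivial). After substituting $\ell = mu$, the sum becomes $\chi_1(m)\chi_2(m)\cdot p^{-1}\sum_u \chi_1(1+u)\chi_2(u)e(muT/p)$. Since $\chi_1,\chi_2$ are primitive (nontrivial) mod $p$ and $p\nmid mT$, this is a mixed character-exponential sum in standard Weil position; applying \cref{thm:weil} to $f(u) = (1+u)^{j_1}u^{j_2}$ (with $j_1,j_2$ coming from expressing each $\chi_i$ as a power of a generator of $\widehat{\mbf{F}_p^\times}$) gives $\ll \sqrt{p}$, and normalizing yields $\ll p^{-1/2}\le 3p^{-1/4}$ for all odd $p$.

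For the general bound, I would use the standard Taylor expansion of primitive characters mod $p^{a_i}$ via the truncated $p$-adic logarithm, which is analytic for $p$ odd: there exists $c \in \mbf{Z}$ coprime to $p$ such that $\chi(1+z) = e_{p^{a}}\bigl(c(z - z^2/2 + z^3/3 - \cdots)\bigr)$, the series being truncated once the terms have $p$-adic valuation $\ge a$. Writing $\chi_1(\ell p^{a_1-a_2}+m) = \chi_1(m)\chi_1(1 + \ell m^{-1}p^{a_1-a_2})$ and applying the expansion with $z = \ell m^{-1}p^{a_1-a_2}$, the hypothesis $\alpha > 2(a_1-a_2) + v_p(T)$ guarantees that the cubic and higher terms have $p$-adic valuation $\ge a_1$ and can be discarded. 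Combining with the phase $e(\ell T/p^{a_2})$ and with $\chi_2(\ell)$, the sum rewrites (up to the factor $\chi_1(m)$) as $\sum_\ell \chi_2(\ell)\,e(g(\ell)/p^{a_2})$ with $g$ a quadratic polynomial in $\ell$, precisely fitting the setup of \cref{thm:evenbound}.

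Applying \cref{thm:evenbound} with $q = p^{a_2}$ then reduces the problem to counting $y \in \mbf{Z}/p^{\lfloor a_2/2\rfloor}\mbf{Z}$ satisfying $h(y) \equiv 0 \pmod{p^{\lfloor a_2/2\rfloor}}$, where $h(y) = g'(y) + b\chi_2'(y)/\chi_2(y)$. In our setting, after clearing denominators, this is a quadratic congruence in $y$ with leading coefficient of $p$-adic valuation exactly $2(a_1-a_2)$ (inherited from the $z^2/2$ term of the Taylor expansion of $\chi_1$) and constant term of valuation $v_p(T + c_1 m^{-1})$. \cref{lem:square-count-odd} then caps the number of solutions by $2p^{\lfloor a_2/2\rfloor - \lceil(\alpha - v_p(T))/2\rceil}$, which after dividing by $p^{a_2}$ produces the stated bound; when $a_2$ is odd, the extra Gauss sum factor $G_p(y)$ from \cref{thm:evenbound} contributes a bounded unit that does not affect the final exponent.

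The main obstacle is the bookkeeping in the Taylor expansion: one must confirm that the coefficient of the $z^2/2$ term survives as a $p$-adic unit after all substitutions (this is where $p$ odd is essential, since the $1/2$ would be singular for $p=2$) and that the hypothesis $\alpha > 2(a_1-a_2) + v_p(T)$ is exactly tight enough to produce the claimed exponent $\lceil(\alpha - v_p(T))/2\rceil$ rather than an off-by-one loss when invoking \cref{lem:square-count-odd}. Handling the parity of $a_2$ (and thereby selecting the correct branch of \cref{thm:evenbound}) is the remaining delicate step.
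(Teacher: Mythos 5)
The proposal takes a genuinely different route from the paper, but both branches contain substantive gaps.

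For the general case $a_1 \ge 2$, you attempt to Taylor-expand $\chi_1(1 + \ell m^{-1}p^{a_1-a_2})$ via the truncated $p$-adic logarithm and discard the cubic and higher terms. This truncation is not valid under the hypothesis. Indeed, $\alpha = \lfloor a_1/2\rfloor > 2(a_1-a_2) + v_p(T) \ge 2(a_1-a_2)$ forces $a_1 - a_2 < a_1/4$, so for $(\ell,p)=1$ we have $v_p(z^3) = 3(a_1-a_2) < 3a_1/4 < a_1$ with $z = \ell m^{-1}p^{a_1-a_2}$. Thus the $z^3/3$ term in $\log(1+z)$ contributes nontrivially to the phase modulo $p^{a_1}$ and cannot be dropped, and in fact under the stated hypothesis the expansion only truncates at degree $\ge 5$. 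The paper avoids this entirely: it never Taylor-expands $\chi_1$, but instead writes $\chi_1 = \chi^{k_1}$, $\chi_2 = \chi^{k_2}$ for a single generator $\chi$ of the cyclic character group mod $p^{a_1}$ and feeds $f(y) = (p^{a_1-a_2}y+m)^{k_1}y^{k_2}$ directly into \cref{thm:evenbound}. The logarithmic derivative $f'/f = \tfrac{k_1 p^{a_1-a_2}}{p^{a_1-a_2}y+m} + \tfrac{k_2}{y}$ is a ratio of linears, so clearing denominators yields an \emph{exact} quadratic congruence $h_2(x) = \ell_1 x^2 + \ell_2 x + \ell_3 \equiv 0 \pmod{p^{\alpha - (a_1-a_2)}}$ with $v_p(\ell_1) = v_p(T) + (a_1-a_2)$ (not $2(a_1-a_2)$ as in your quadratic-Taylor heuristic), and the case analysis comparing $v_p(\ell_1)$ and $v_p(\ell_2)$ together with \cref{lem:square-count-odd} and Hensel gives the exponent. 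You would need to reproduce this rational-function structure, not approximate it.

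For $a_1 = a_2 = 1$, you invoke \cref{thm:weil} directly on the mixed sum $\sum_u \chi_1(1+u)\chi_2(u)e(muT/p)$. But the paper's stated \cref{thm:weil} covers only pure multiplicative sums $\sum_x \chi(f(x))$, not the mixed sum you need, so the step is unsupported as written. The paper instead squares the sum, shifts $\ell \mapsto \ell + t$, and applies Cauchy--Schwarz in $t$; the $T$-dependence then factors out as $e(-tT/p)$ and the inner sum becomes a degree-$4$ pure character sum, to which \cref{thm:weil} applies. This is exactly why the resulting exponent is $p^{-1/4}$ rather than the $p^{-1/2}$ you claim. You also dismiss the case $p \mid m$ as ``trivial,'' but in that case $\chi_1(\ell+m) = \chi_1(\ell)$ and the sum becomes a Gauss or Ramanujan sum depending on whether $\chi_1\chi_2$ is trivial; this subcase requires the hypothesis $p\nmid T$ and is handled explicitly in the paper.
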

\begin{proof}
We first consider the case where $a_1\ge 2$; the initial bound is trivial when $a_1 = 1$. Note that $\chi_2$ naturally lifts to a character on $\mbf{Z}/p^{a_1}\mbf{Z}$. As $p$ is odd, there exists a primitive character $\chi$ modulo $\mbf{Z}/p^{a_1}\mbf{Z}$ such that 
\[\chi_{1} = \chi^{k_1}\text{ and }\chi_2 = \chi^{k_2}.\]
Therefore, we seek to estimate
\[\big|\mb{E}_{\ell\in \mbf{Z}/p^{a_1}\mbf{Z}}\chi((\ell p^{a_1-a_2}+m)^{k_1}\ell^{k_2})e(\ell T/p^{a_2})\big|.\]

Due to \cref{thm:evenbound}, we see that it suffices to bound the number of solutions to $h(x)\equiv 0\imod p^{\alpha}$ where 
\[h(x) = p^{a_1-a_2}T + b\bigg(\frac{k_1p^{a_1-a_2}}{p^{a_1-a_2} x+ m} + \frac{k_2}{x}\bigg).\]
Letting $k_2 = p^{a_1-a_2}k_2'$,
\[h(x) = p^{a_1-a_2}T + b\bigg(\frac{k_1}{p^{a_1-a_2}x + m} + \frac{k_2'}{x}\bigg).\]
Therefore, it suffices to study roots of 
\[h_2(x) = Tx(p^{a_2-a_1}x + m) + b(k_1x + k_2'(p^{a_1-a_2}x + m)):= \ell_1x^2 + \ell_2x + \ell_3\]
in $\mbf{Z}/p^{\alpha - (a_2-a_1)}\mbf{Z}$. Note that $v_p(\ell_1) = v_p(Tp^{a_2-a_1}) = v_p(T) + a_2-a_1$ and by assumption that $v_{p}(\ell_1)<\alpha - (a_2-a_1)$.

If $v_p(\ell_1)\le v_p(\ell_2)$, we may divide by $p^{v_p(\ell_1)}$, and complete the square. In this case, by \cref{lem:square-count-odd} and \cref{thm:evenbound}, we obtain an upper bound on the character sum of 
\[\le 2p^{-\lceil (\alpha - (a_2-a_1)-v_p(\ell_1))/2\rceil}\le 2p^{-\lceil (\alpha - v_p(T))/2\rceil + (a_2-a_1)}.\]

In the remaining case, we may assume that $v_p(\ell_2)<v_{p}(\ell_1)$. We must have that $v_p(\ell_3)\ge v_p(\ell_2)$; otherwise, we trivially see there are no solutions. Thus we are forced to examine 
\[p^{-v_p(\ell_2)}h_2(x) \equiv 0 \imod p^{\alpha-(a_2-a_1) - v_p(\ell_2)}.\]
Since the linear coefficient is not divisible by $p$, we may use Hensel's lemma to obtain that there is at most one solution of the above equation. In this case, we achieve a superior upper bound of 
\[\le p^{-\alpha + (a_2-a_1) + v_p(\ell_2)}\le p^{-\alpha + v_p(T) + 2(a_2-a_1)}\]
on the character sum. We now address the case when $a_1 = a_2 = 1$. As before, there exists a primitive character $\chi$ such that $\chi_1 = \chi^{k_1}$ and $\chi_2 = \chi^{k_2}$ with $k_1, k_2 \in [0, p - 1]$. We have that 
\begin{align*}
\big|\mb{E}_{\ell\in \mb{F}_p}\chi((\ell+m)^{k_1}\ell^{k_2})e(-\ell T/p)\big|^2 &= \mb{E}_{\ell,t\in \mb{F}_p}\chi((\ell+m)^{k_1}(\ell + t + m)^{p-1-k_1}\ell^{k_2}(\ell + t)^{p-1-k_2})e(-t T/p)\\
&\le \mb{E}_{t\in \mb{F}_p}\big|\mb{E}_{\ell\in \mb{F}_p}\chi((\ell+m)^{k_1}(\ell + t + m)^{p-1-k_1}\ell^{k_2}(\ell + t)^{p-1-k_2})\big|.
\end{align*}
If $p\nmid m$, then each term in the above is bounded by $4p^{-1/2}$ unless $t = 0$ or $-m$. When $p|m$, we have to instead consider 
\[\mb{E}_{t\in \mb{F}_p}\big|\mb{E}_{\ell\in \mb{F}_p}\chi((\ell + t)^{2(p-1)-k_1-k_2}\ell^{k_1 + k_2})\big|.\]
If $k_1 + k_2 \neq p-1$, we immediately have the desired result. Finally, in the case where $k_1 + k_2 = p-1$ and $p|m$, we have that $\chi_1 = \bar{\chi_2}$ and thus, we are evaluating 
\[\mb{E}_{\ell\in \mb{F}_p}e(\ell T/p) \cdot \mbm{1}_{p\nmid\ell}\]
which is $-1/p$ if $p\nmid T$.
\end{proof}

We now handle the case where $p = 2$; the bounds for counting solutions over squares is now slightly distinct. 

\begin{lemma}\label{lem:square-count-even}
For $a \in \mbf{Z}$, we have that for all integers $k \ge 1$,
\[\big\{x^2 \equiv a \imod 2^k:x\in \mbf{Z}/2^k\mbf{Z}\big\}\le 4\cdot 2^{k-\lceil k/2\rceil}.\]
\end{lemma}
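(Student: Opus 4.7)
The plan is to mimic \cref{lem:square-count-odd} but accommodate the well-known degeneracy of Hensel's lemma for squaring at $p = 2$, which costs an extra factor of $2$ in the bound. First I would reduce to the case of an odd right-hand side. If $v_2(a)$ is odd there are no solutions, while if $v_2(a) = 2\ell$ with $0 \le 2\ell < k$, then comparing $2$-adic valuations in $x^2 \equiv a \pmod{2^k}$ forces $v_2(x) = \ell$. Writing $x = 2^\ell y$ with $y$ odd and viewing $y \in \mbf{Z}/2^{k-\ell}\mbf{Z}$ reduces the problem to counting odd solutions of $y^2 \equiv a/2^{2\ell} \pmod{2^{k-2\ell}}$, where each such solution modulo $2^{k-2\ell}$ lifts to exactly $2^\ell$ odd residues modulo $2^{k-\ell}$.

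The next step is the classical count: for $a'$ odd and $m \ge 1$, the congruence $y^2 \equiv a' \pmod{2^m}$ has at most $4$ solutions. For $m \le 2$ this is trivial since there are only $2^{m-1} \le 2$ odd residues modulo $2^m$. For $m \ge 3$ one invokes the isomorphism $(\mbf{Z}/2^m\mbf{Z})^\ast \cong \mbf{Z}/2\mbf{Z} \times \mbf{Z}/2^{m-2}\mbf{Z}$, under which the image of the squaring map has index $4$; hence each value in the image has exactly $4$ preimages and every other value has none. Combining with the reduction step, the number of solutions when $v_2(a) = 2\ell < k$ is at most $4 \cdot 2^\ell$, and the constraint $2\ell < k$ gives $\ell \le \lfloor k/2\rfloor$, so $4 \cdot 2^\ell \le 4 \cdot 2^{\lfloor k/2 \rfloor} = 4 \cdot 2^{k - \lceil k/2 \rceil}$.

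The remaining case is $v_2(a) \ge k$, i.e.\ $a \equiv 0 \pmod{2^k}$, where $x^2 \equiv 0 \pmod{2^k}$ holds iff $v_2(x) \ge \lceil k/2 \rceil$, producing precisely $2^{k - \lceil k/2 \rceil}$ solutions, which is within the claimed bound. I do not foresee any serious obstacle here: everything reduces to a standard structural fact about $(\mbf{Z}/2^m\mbf{Z})^\ast$, and the factor $4$ appearing in the statement (in place of the $2$ in \cref{lem:square-count-odd}) exactly matches the size of the kernel of squaring on this group, which is the sole feature distinguishing this case from the odd-prime one.
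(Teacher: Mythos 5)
Your proof is correct, and the overall structure matches the paper's: strip out the power of $2$ dividing $a$, reduce to counting odd square roots of an odd residue, and observe that there are at most $4$ of those. The one place the two arguments diverge is in establishing that last count. The paper argues directly: if $x^2 \equiv y^2 \pmod{2^k}$ with $x,y$ odd, then $2^k \mid (x-y)(x+y)$, and since exactly one of $x-y$, $x+y$ is $\equiv 2 \pmod 4$, the other must be divisible by $2^{k-1}$, forcing $y \equiv \pm x \pmod{2^{k-1}}$ and hence at most $4$ residues mod $2^k$. You instead invoke the structure theorem $(\mbf{Z}/2^m\mbf{Z})^\ast \cong \mbf{Z}/2\mbf{Z}\times \mbf{Z}/2^{m-2}\mbf{Z}$ (for $m\ge 3$) and read off that the squaring map has a kernel of order $4$. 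These are two standard proofs of the same fact; the paper's is self-contained and slightly more elementary, while yours is cleaner if one is willing to quote the unit-group structure. Your write-up is also somewhat more careful about the bookkeeping in the reduction (explicitly tracking the factor $2^\ell$ from the lifting and the $v_2(a)\ge k$ case), which the paper compresses into a single sentence, so this is a perfectly good alternative exposition.
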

\begin{proof}
Note that there are no solutions if $2 \nmid v_2(a)$. Thus, we may reduce to $(a,2) = 1$ and prove that 
\[\big\{x^2 \equiv a \imod 2^k:x\in \mbf{Z}/2^k\mbf{Z}\big\}\le 4.\]
Suppose that $2^k|(x^2-y^2)$ and $2\nmid xy$. Then either $2^{k-1}|(x-y)$ or $2^{k-1}|(x+y)$, both of which imply that there are at most $4$ solutions as desired. 
\end{proof}

We finally reach the case of $p = 2$ of \cref{lem:char-sum}. The primary difference with the odd case stems from the fact that the underlying group of characters is no longer cyclic; in addition, we will be rather careless with fixed powers of two in the lemma as they feed harmlessly into the main proof. 
\begin{lemma}\label{lem:char-sum-even}
Let $p=2$ and $\chi_i$ be primitive characters modulo $p^{a_i}$ with $a_1\ge a_2$. Let $\alpha = \lfloor a_1/2\rfloor$ and if $\alpha > 2(a_1-a_2) + v_p(T)$ then 
\[|\mb{E}_{\ell\in \mbf{Z}/2^{a_2}\mbf{Z}}\chi_1(\ell 2^{a_1-a_2} + m)\chi_2(\ell)e(\ell T/2^{a_2})|\le \min(1,32 \cdot 2^{-\lceil (\alpha - v_p(T))/2\rceil + (a_1-a_2)}).\]
\end{lemma}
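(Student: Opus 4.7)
The plan is to mirror the proof of \cref{lem:odd-char}, adapting two features particular to $p = 2$: the group $(\mathbb{Z}/2^{a_1}\mathbb{Z})^*$ is not cyclic, so we cannot directly realize both $\chi_1$ and $\chi_2$ as powers of a single primitive character, and \cref{lem:square-count-even} allows up to $4$ square roots modulo $2^k$ versus $2$ in the odd case. For $a_1$ below an absolute constant, the claimed bound exceeds $1$ and is trivial, so we may assume $a_1$ is large; in particular, $\alpha \ge 3$.

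I would begin by decomposing each primitive character $\chi_i$ via the isomorphism $(\mathbb{Z}/2^{a_i}\mathbb{Z})^* \cong \{\pm 1\} \times \langle 5\rangle$, writing $\chi_i = \epsilon_i \cdot \psi_i$ where $\epsilon_i$ is valued in $\{\pm 1\}$ and depends only on $\ell \bmod 4$, while $\psi_i$ is trivial on $-1$ and primitive as a character of the cyclic factor of order $2^{a_i - 2}$ (primitivity of $\chi_i$ forces this). Partitioning the $\ell$-sum into the at most four arithmetic progressions modulo $4$ on which $\epsilon_1(\ell 2^{a_1-a_2}+m)\epsilon_2(\ell)$ is constant incurs a factor of $4$ and reduces matters to inner sums involving only the cyclic parts $\psi_i$, which on $\langle 5 \rangle$ do behave like powers of a single primitive character, just as in the odd-prime setting.

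I would then apply \cref{thm:evenbound} (which is stated for all primes $p$) to each resulting sum. The assumption $\alpha \ge 2$ ensures $1 + z \cdot 2^{\alpha} \in \langle 5\rangle$, so the defining relation $\chi(1 + z \cdot 2^{\alpha}) = e(bz/2^{\alpha})$ only probes the cyclic part and the analysis proceeds exactly as in \cref{lem:odd-char}: the polynomial $h(x)$ reduces, after clearing denominators, to a quadratic $\ell_1 x^2 + \ell_2 x + \ell_3$ to be counted modulo $2^{\alpha - (a_1-a_2)}$, with $v_2(\ell_1) = v_2(T) + (a_1-a_2)$. The same dichotomy on $v_2(\ell_1)$ versus $v_2(\ell_2)$ then applies: in the Hensel case ($v_2(\ell_2) < v_2(\ell_1)$) one gets at most an absolute-constant number of solutions after dividing through by $2^{v_2(\ell_2)}$, which is strictly better than what is needed; in the complete-the-square case ($v_2(\ell_1) \le v_2(\ell_2)$) one divides out $2^{v_2(\ell_1)}$, completes the square, and invokes \cref{lem:square-count-even} in place of \cref{lem:square-count-odd}, losing a factor of $4$ relative to the odd analysis. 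Compounding this with the factor of $4$ from the sign-character reduction and a few further bounded powers of $2$ absorbed from Hensel lifting and completing the square at $p=2$ yields the announced constant $32$.

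The main obstacle is bookkeeping the constants cleanly through the $p=2$ version of \cref{thm:evenbound}: completing the square at $p=2$ loses factors of $2$ because one cannot freely divide by $2$, and Hensel lifting at $p=2$ has a mild multiplicity issue near the derivative; both are controllable but must be tracked so that no powers of $2$ in the $a_1 - a_2$ exponent are lost. The stronger hypothesis $\alpha > 2(a_1-a_2) + v_2(T)$ already present in the statement of \cref{lem:char-sum} provides exactly the slack needed to close this bookkeeping.
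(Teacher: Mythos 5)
Your overall strategy is the same as the paper's — decompose $(\mathbf{Z}/2^{a_i}\mathbf{Z})^*$ into a sign part and a cyclic part, reduce to a single primitive character $\chi$ on the cyclic part, apply \cref{thm:evenbound}, and count roots of a quadratic via \cref{lem:square-count-even} with a Hensel/complete-the-square dichotomy. However, your mechanism for disposing of the sign characters has a gap, and you omit a check that actually uses the hypothesis.

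The problematic step is ``partitioning the $\ell$-sum into arithmetic progressions modulo $4$'' and then applying \cref{thm:evenbound} to each piece. \cref{thm:evenbound} evaluates $\sum_{y\in\mathbf{Z}/q\mathbf{Z}}\chi(f(y))e(ag(y)/q)$, i.e.\ a sum over the \emph{entire} residue ring; a sum restricted to a coset of $4\mathbf{Z}$ is not in this form. To make a coset sum accessible you would expand the indicator $\mathbf{1}_{\ell\equiv\ell_0\,(4)}=\frac{1}{2}(1+\xi(\ell_0)\xi(\ell))$, which just reinstates the sign character $\xi$ — i.e.\ the partition is a detour, not a reduction. The paper instead absorbs $\xi$ directly: for odd $\ell$, $\xi(\ell)=(-1)^{(\ell-1)/2}$ equals $e(\ell/4)$ times a constant, so the product $\xi^{\epsilon_1}(\ell 2^{a_1-a_2}+m)\,\xi^{\epsilon_2}(\ell)$ can be folded into $e(\ell T/2^{a_2})$, replacing $T$ by some $T'$ with $T\equiv T'\pmod{2^{a_2-2}}$. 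This is where the hypothesis is genuinely needed: one must verify $v_2(T')=v_2(T)$, which holds because $\alpha>2(a_1-a_2)+v_2(T)$ forces $v_2(T)<a_2-2$. Your write-up never performs this verification, yet without it the valuation of the leading coefficient $\ell_1$ in the quadratic could change and the claimed exponent in the final bound would be wrong. (You also need to extend the average from $\mathbf{Z}/2^{a_2}\mathbf{Z}$ to $\mathbf{Z}/2^{a_1}\mathbf{Z}$ so that the modulus matches the conductor of $\chi$ when invoking \cref{thm:evenbound}; this uses $2^{a_2}$-periodicity of the summand and is not mentioned.)

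A smaller point: in the complete-the-square branch the paper splits into $v_2(\ell_2)>v_2(\ell_1)$ (divide by $2^{v_2(\ell_1)}$, complete the square, bound $4\cdot 2^{-\lceil\cdot\rceil}$) and $v_2(\ell_2)=v_2(\ell_1)$ (multiply by $4$ first to make the square-completion integral, costing an extra factor and a shift in the modulus). Your sketch collapses these and attributes the extra powers of $2$ to a ``factor of $4$ from the sign-character reduction'' which does not appear in the paper's argument at all — the paper's worst case constant is $8$, and the slack up to $32$ in the statement is not spent where you propose to spend it. The claimed final constant would therefore need to be re-derived if you pursued the partitioning route.
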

\begin{proof}
We follow the proof in \cref{lem:odd-char} with slight modifications as required. Note that for the bound not to be vacuous, we require that $5<\lceil \alpha/2\rceil$; this means we may assume that $a_1\ge 21$. As $a_1/2 >2(a_1-a_2)$, we have that $a_2\ge 15$.

As $p = 2$, it is no longer true that $\chi_1$ and $\chi_2$ are powers of a single character. Instead, $(\mbf{Z}/2^n\mbf{Z})^{\ast}\simeq \mbf{Z}/2^{n-2}\mbf{Z}\times \mbf{Z}/2\mbf{Z}$. Using an explicit description of this isomorphism, we may take
\[\chi_1 = \chi^{k_1}\xi^{\eps_1}\text{ and }\chi_2 = \chi^{k_2}\xi^{\eps_2}\]
where $\chi$ is a primitive character modulo $2^{a_1}$ and $\xi(x) = (-1)^{(x - 1)/2}$ for $x$ odd and $\xi(x) = 0$ if $x$ is even (and $\eps_i\in \{0,1\}$).
Noting that $\xi$ is zero precisely when $\chi$ is zero, we may replace $\xi$ with $(-1)^{(x - 1)/2}$. Hence,
\[|\mb{E}_{\ell\in \mbf{Z}/2^{a_2}\mbf{Z}}\chi((\ell 2^{a_1-a_2} + m)^{k_1}\ell^{k_2})e(\ell T'/2^{a_2})|\]
where $T\equiv T' \imod 2^{a_2-2}$. Note that if $v_2(T)\ge a_2-2$, then $a_1/2 >2(a_1-a_2) + (a_2 - 2)$ or $2>3a_1/2 - a_2\ge a_1/2$. This is clearly not possible as otherwise, the initial bound is vacuous, and therefore $v_2(T) = v_2(T')$. The average we consider thus becomes
\[|\mb{E}_{\ell\in \mbf{Z}/2^{a_1}\mbf{Z}}\chi((\ell 2^{a_1-a_2} + m)^{k_1}\ell^{k_2})e(\ell T'/2^{a_2})|.\]

Therefore, we wish to count the roots of 
\[h(x) = 2^{a_1-a_2}T' + b\bigg(\frac{2^{a_1-a_2}\cdot k_1}{2^{a_1-a_2}x + m} + \frac{k_2}{x}\bigg)\]
in $\mbf{Z}/2^{\alpha}\mbf{Z}$. Writing $k_2 = 2^{a_2}k_2'$, we count the roots of 
\[2^{a_2-a_1} h(x) = T' + b\bigg(\frac{k_1}{2^{a_1-a_2}x + m} + \frac{k_2'}{x}\bigg)\]
in $\mbf{Z}/2^{\alpha - a_1 + a_2}\mbf{Z}$. By clearing denominators, it suffices to count the roots of 
\[h_2(x) = T'(2^{a_1-a_2}x + m)x + b(k_1x + k_2'(2^{a_1-a_2}x+m)):= \ell_1 x^2 + \ell_2 x + \ell_3\]
in $\mbf{Z}/2^{\alpha - a_1 + a_2}\mbf{Z}$. Note that the largest power of $2$ dividing the coefficient of $x^2$ is $2^{a_1-a_2+v_p(T')}$. 

First, suppose that $v_2(\ell_1)\le  v_2(\ell_2)$. If $v_2(\ell_3)<v_{2}(\ell_2)$, then there are no roots to $h_2$. Otherwise, we consider roots of
\[2^{-v_2(\ell_1)}(\ell_1 x^2 + \ell_2 x + \ell_3)\]
in $\mbf{Z}/2^{\alpha - a_1 + a_2 -v_{2}(\ell_1)}\mbf{Z}$. If $v_2(\ell_2)>v_2(\ell_1)$ we may complete the square and applying \cref{lem:square-count-even} and \cref{thm:evenbound} , we obtain a bound of 
\[4\cdot 2^{-\lceil (\alpha - a_1 + a_2 -v_{2}(\ell_1))/2\rceil} \le  4\cdot 2^{-\lceil (\alpha - v_2(T))/2\rceil + (a_1 -a_2)}\]
on the character sum. If $v_2(\ell_2) = v_2(\ell_1)$, we may multiply by $4$ and then complete the square. After applying \cref{lem:square-count-even} and \cref{thm:evenbound}, we obtain a bound of 
\[16 \cdot 2^{-\lceil (\alpha - a_1 + a_2 -v_{2}(\ell_1) + 2)/2\rceil} \le 8\cdot 2^{-\lceil (\alpha - v_2(T))/2\rceil + (a_1 -a_2)}\]
on the character sum.

Therefore we are left with the case where $v_2(\ell_1)>v_2(\ell_2)$. Note that we may reduce to the case of $v_2(\ell_3)\ge v_2(\ell_2)$ as otherwise, there are no solution to $h$. Thus, it suffices to consider solutions of 
\[2^{-v_2(\ell_2)}(\ell_1 x^2 + \ell_2 x + \ell_3)\]
in $\mbf{Z}/2^{\alpha - a_1 + a_2 -v_{2}(\ell_2)}\mbf{Z}$. Note that there is a unique solution via Hensel's lemma in this situation and thus by \cref{thm:evenbound}, we obtain a bound of 
\[2^{-\alpha + a_1 - a_2 + v_2(\ell_2)}\le 2^{-\alpha + 2(a_1 - a_2) + v_2(T)}\]
and the character sum. All three bounds we obtained on the character sum are sufficient for this lemma.
\end{proof}
\bibliographystyle{amsplain0}
\bibliography{main.bib}

\end{document}